\documentclass[11pt]{amsart}
\usepackage[foot]{amsaddr}
\usepackage{ifxetex}
\ifxetex
  \usepackage[no-math]{fontspec}
\else
\fi

\usepackage{amsmath}
\usepackage{amssymb}
\usepackage{amsthm}
\usepackage{fullpage}
\usepackage[T1]{fontenc} 
\usepackage{textcomp} 
\usepackage{eulerpx, eucal, eufrak, tgpagella}
\usepackage{multicol}
\usepackage{listings}
\lstset{
  frame=tb,
  language=Mathematica,
  columns=flexible,
  basicstyle={\footnotesize\ttfamily},
  numbers=left,
  numberstyle=\tiny\color{gray},
  keywordstyle=\color{blue},
  commentstyle=\color{dkgreen},
  stringstyle=\color{mauve},
  breaklines=true,
  breakatwhitespace=true,
  tabsize=3
}

\usepackage{hyperref, color}
\hypersetup{colorlinks=true,citecolor=blue, linkcolor=blue, urlcolor=blue}
\usepackage[linesnumbered,boxed,ruled,vlined]{algorithm2e}
\usepackage{bm}
\usepackage{bbm}
\usepackage[numbers]{natbib}
\usepackage{xcolor}
\usepackage{enumerate} 
\usepackage{enumitem}
\usepackage{tabularx}
\usepackage{array}
\usepackage{cleveref}
\usepackage{mathrsfs}
\usepackage{tikz}
\usetikzlibrary{arrows.meta}
\usepackage{float}
\newcolumntype{L}[1]{>{\raggedright\arraybackslash}p{#1}}
\newcolumntype{C}[1]{>{\centering\arraybackslash}m{#1}}
\newcolumntype{R}[1]{>{\raggedleft\arraybackslash}p{#1}}
  
\usepackage{makecell}

\usepackage{footnote}
\makesavenoteenv{tabular}

\newcommand{\DTV}[2]{d_{\mathrm{TV}}\left({#1},{#2}\right)}

\newcommand{\e}{\mathrm{e}}

\renewcommand{\epsilon}{\varepsilon}

\newtheorem{theorem}{Theorem}[section]

\newtheorem*{claim*}{Claim}

\newtheorem{fact}[theorem]{Fact}
\newtheorem{lemma}[theorem]{Lemma}
\newtheorem{proposition}[theorem]{Proposition}

\theoremstyle{definition}

\newtheorem{definition}[theorem]{Definition}
\newtheorem{remark}[theorem]{Remark}
\newtheorem*{remark*}{Remark}




\renewcommand{\emptyset}{\varnothing}

\newcommand{\norm}[1]{\left\Vert#1\right\Vert}
\newcommand{\set}[1]{\left\{#1\right\}}
\newcommand{\tuple}[1]{\left(#1\right)} 

\newcommand{\tp}{\tuple}

\renewcommand{\d}{\,\-d}

\newcommand{\abs}[1]{\left\vert#1\right\vert}

\newcommand{\ci}{coupling independent}

\def\*#1{\boldsymbol{#1}} 
\def\+#1{\mathcal{#1}} 
\def\-#1{\mathrm{#1}} 
\def\^#1{\mathscr{#1}} 

\usepackage{todonotes}

\usepackage{xifthen}

\def\oPr{\*{Pr}}
\renewcommand{\Pr}[2][]{ \ifthenelse{\isempty{#1}}
  {\oPr\left[#2\right]}
  {\oPr_{#1}\left[#2\right]} } 

\def\oE{\mathbb{E}}
\newcommand{\E}[2][]{ \ifthenelse{\isempty{#1}}
  {\oE\left[#2\right]}
  {\oE_{#1}\left[#2\right]} }

\def\oVar{\*{Var}}
\newcommand{\Var}[2][]{ \ifthenelse{\isempty{#1}}
  {\oVar\left[#2\right]}
  {\oVar_{#1}\left[#2\right]} }

\def\oEnt{\mathbf{Ent}}
\newcommand{\Ent}[2][]{ \ifthenelse{\isempty{#1}}
  {\oEnt\left[#2\right]}
  {\oEnt_{#1}\left[#2\right]} }



\newcommand{\relaxT}[2][]{
  \ifthenelse{\isempty{#2}}
  {t_{\mathrm{rel}}^{\mathrm{#1}}}
  {t_{\mathrm{rel}}^{\mathrm{#1}}(#2)}
}

\def\0{-1}
\def\1{+1}

\usepackage{multirow}
\usepackage{tikz}

\def\as#1#2#3{#1^{#2 \gets #3}}

\title{A near-linear time sampler for the Ising model with external field}
\date{}
\author{Xiaoyu Chen}
\author{Xinyuan Zhang}
\address[Xiaoyu Chen, Xinyuan Zhang]{State Key Laboratory for Novel Software Technology, Nanjing University, 163 Xianlin Avenue, Nanjing, Jiangsu Province, China. \textnormal{E-mails: \url{chenxiaoyu233@smail.nju.edu.cn}, \url{zhangxy@smail.nju.edu.cn}}}

\begin{document}
\begin{abstract}
  We give a near-linear time sampler for the Gibbs distribution of the ferromagnetic Ising models with edge activities $\*\beta > 1$ and external fields $\*\lambda<1$ (or symmetrically, $\*\lambda>1$) on general graphs with bounded or unbounded maximum degree.
  
  Our algorithm is based on the field dynamics given in~\cite{chen2021rapid}. 
  We prove the correctness and efficiency of our algorithm by establishing spectral independence of distribution of the random cluster model and the rapid mixing of Glauber dynamics on the random cluster model in a low-temperature regime, which may be of independent interest.
\end{abstract}
\maketitle
\section{Introduction}
The Ising model~\cite{ising1925beitrag} introduced by Ising and Lenz is an extensively studied statistical physics model which leads to many inspiring discoveries in physics, discrete probability, machine learning, and theoretical computer science.
Let $G = (V, E)$ be an undirected graph with $n$ vertices and $m$ edges, $\*\beta \in (1,+\infty)^E$ be the \emph{edge activities}, and $\*\lambda \in [0, 1]^V$ be the \emph{external fields}.
The \emph{Gibbs distribution} $\mu^{\-{Ising}}_{\*\beta, \*\lambda}$ over $2^V$ of the \emph{ferromagnetic Ising model} is defined by
\begin{align*}
  \forall S \subseteq V, \quad \mu^{\-{Ising}}_{\*\beta, \*\lambda}(S) := \frac{1}{Z^{\-{Ising}}_{\*\beta, \*\lambda}} \prod_{e \in m(S)} \beta_e \prod_{v \in S} \lambda_v,
\end{align*}
where $m(S) := \{e \in E \mid e\cap S = e \text{ or } e\cap S = \emptyset\}$ denotes the set of ``monochromatic'' edges, and $Z^{\-{Ising}}_{\*\beta, \*\lambda} := \sum_{S\subseteq V} \prod_{e \in m(S)} \beta_e \prod_{v \in S} \lambda_v$ is known as the partition function.
A major problem is to sample from the Gibbs distribution of the ferromagnetic Ising model.

One of the most well-known approaches is the Markov Chain Monte Carlo (MCMC) method. 
The Glauber dynamics, also known as the Gibbs sampler, is an example of this method.
There are numerous researches establishing the rapid mixing results of Glauber dynamics~\cite{mossel2013exact, chen2020rapid, chen2020optimal, chen2021rapid, anari2021entropic, anari2021entropicII, chen2022localization} when $\*\beta < \beta_c(\Delta)$, where
$\Delta$ is the maximum degree of a graph $G$ and $\beta_c(\Delta) := \Delta/(\Delta - 2)$ is the critical threshold.
However, when $\*\beta > \beta_c(\Delta)$, it is known that there exist graphs such that the Glauber dynamics is exponentially slow in the size of the graph~\cite{GerschenfeldM07}.

Even though the Glauber dynamics fails to be efficient, there still exist fast algorithms to sample from the Gibbs distribution of the ferromagnetic Ising model.
The random cluster model~\cite{fortuin1972random-I, fortuin1972random-II, fortuin1972random-III} and subgraph-world model are two statistical mechanics models that are closely related to the Ising model.
Leveraging the connection between the partition function of the Ising model and the subgraph-world model~\cite{newell1953theory},~\cite{jerrum1993polynomial} showed that the $1/2$-lazy Metropolis chain on the subgraph-world model converges rapidly by using the technique of canonical path~\cite{jerrum1989approximating}.
The following works~\cite{guo2018random,feng2022sampling} established a similar mixing time of Glauber dynamics on the random cluster model via multicommodity flow based on the canonical path in~\cite{jerrum1993polynomial} and the coupling in~\cite{grimmett2009random}.
All these results can be translated into fast Ising samplers that run in time $\widetilde{O}_{\*\beta, \*\lambda}(m^3)$ when $\*\beta > 1$ and $\*\lambda < 1$.
Furthermore, these samplers also work when $\*\lambda = 1$, where the running time degenerates to $\widetilde{O}_{\*\beta}(n^4m^3)$.
It is also worth mentioning that for the case $\*\lambda = 1$, there is a specific Markov chain called worm process on the Prokof'ev-Svistunov measure of the Ising model studied in statistical physics, which is proved to be rapid mixing~\cite{collevecchio2016worm}.

With bounded degree assumption, \cite{chen2021spectral} 
proved the optimal mixing time of Glauber dynamics on the subgraph-world model via spectral independence, implying a fast Ising sampler that runs in $\Delta^{O_{\*\beta, \*\lambda}(\Delta)} \cdot \widetilde{O}(m)$ when $\*\beta > 1$ and $\*\lambda < 1$.

Apart from MCMC based method, algorithms based on zero-freeness property and polymer model also achieve polynomial running time with restrictions.
By the celebrated Lee-Yang circle theorem~\cite{lee1952statistical} and the polynomial interpolation algorithm framework~\cite{barvinok2016combinatorics, patel2017deterministic, peters2019conjecture}, there is an algorithm for sampling the ferromagnetic Ising model that runs in time $n^{O_{\*\lambda}(\log\Delta)}$ for $\*\beta > 1$ and $\*\lambda < 1$~\cite{liu2019ising}, where $\Delta$ is the maximum degree of the graph.
While algorithms based on the polymer model usually require graph $G$ to be an $\alpha$-expander for some constant $\alpha > 0$ and parameter $\*\beta = \Omega(\log(\Delta)/\alpha)$.
An algorithm of this type with running time $n^{O_{\*\beta, \alpha}(\log \Delta)}$ was given in~\cite{jenssen2020algorithms}, and
follow-up works~\cite{chen2021fast, blanca2022fast} improved the running time to $\widetilde{O}_{\*\beta, \alpha}(n)$.

Besides, there are many other fast samplers for the Ising model on special families of graphs \cite{martinelli2003ising, galanis2019swendsen, blanca2021swendsen, blanca2022entropy, gheissari2022low}, such as lattice graph and torus graph.

In summary, no algorithms in previous studies run faster than cubic time without any assumption on graphs or parameters.
It is natural to ask the following question:

\begin{center}
  \textit{Are there faster algorithms to sample from Gibbs distribution of the Ising model in the general case?}
\end{center}

In this paper, we answer this question in the affirmative.
\begin{theorem} \label{thm:main}
  Let $\delta_{\beta}, \delta_{\lambda} \in (0,1)$ be constants, and $\mu$ be the Gibbs distribution of the ferromagnetic Ising model specified by graph $G = (V, E)$, parameters $\*\beta \in [1+\delta_{\beta},+\infty)^E$ and $\*\lambda \in [0, 1-\delta_{\lambda}]^V$.
  There exists an algorithm that samples $X$ satisfying $\DTV{X}{\mu} \leq \epsilon$ for any given parameter $\epsilon > 0$ within running time
  \begin{align*}
    m \cdot (\log n)^{O_{\delta_{\beta},\delta_{\lambda}}(1)},
  \end{align*}
  where $m$ is the number of edges and $n$ is the number of vertices.
\end{theorem}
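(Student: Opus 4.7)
The plan is to combine the field dynamics framework of~\cite{chen2021rapid} with new mixing estimates on the random cluster companion of the Ising model, and to engineer a near-linear time implementation. Field dynamics decomposes sampling from the low-temperature Ising distribution $\mu^{\-{Ising}}_{\*\beta, \*\lambda}$ into (i) sampling a random ``activated'' subset $S \subseteq V$ that approximately preserves the marginal of the current spin configuration, and (ii) resampling the spins on $S$ under rescaled edge activities $\theta\*\beta$ (spins outside $S$ pinned), where $\theta \in (0,1)$ is chosen so that the rescaled instance falls in the high-temperature regime $\theta\*\beta<\beta_c$ on the activated sub-instance and is therefore amenable to rapid mixing by known results. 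Once the outer field dynamics is shown to mix in $(\epsilon^{-1}\log n)^{O(1)}$ steps and each step is implementable in $\widetilde{O}(m)$ time, the theorem follows.

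The first step is to establish spectral independence for the random cluster distribution in the entire regime $\*\beta>1$, $\*\lambda<1$. Via the Edwards--Sokal coupling, the Ising Gibbs distribution is the spin-marginal of the random cluster distribution, which lives on $m$ edge-coordinates; this is precisely the feature that will allow the eventual algorithm to scale with $m$ rather than with $n\Delta$ or $m^3$. I would prove spectral independence by bounding the total pairwise edge-to-edge influences, combining FKG/monotone-coupling arguments from~\cite{grimmett2009random} with a potential-function or tree-recursion analysis to control how a single edge conditioning propagates through the random cluster measure. Marginal lower bounds (needed to go from spectral independence to entropy/MLSI contraction) should follow from the same monotonicity properties.

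Next I would lift spectral independence and the marginal bounds to an optimal $\widetilde{O}(m)$ mixing bound for the Glauber dynamics on the random cluster model, using the localization/entropic machinery (spectral independence plus marginal lower bounds imply entropy contraction and hence near-optimal mixing). This rapid mixing result then plays a double role: it is used both to implement the resampling of step (ii) of field dynamics (via another Edwards--Sokal reduction on the activated sub-instance, whose rescaled activities $\theta\*\beta$ still lie in the regime covered by the spectral independence proof), and to realize the activation step (i) as an efficient sub-routine. Careful accounting of the truncation error of field dynamics, the TV error of the subroutine, and the dependence on $\delta_{\beta},\delta_{\lambda}$ inside the implicit constants, together with standard data structures for maintaining the random cluster configuration under edge updates in $\widetilde{O}(1)$ amortized time, will assemble the final runtime.

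The main obstacle is the first step. Existing spectral independence proofs for the ferromagnetic Ising model (for instance~\cite{chen2021spectral}) are tailored to the high-temperature regime and/or require a bounded-degree assumption, and they do not transfer to the low-temperature random cluster model on general unbounded-degree graphs: the natural tree-correlation-decay argument simply fails because $\*\beta > \beta_c$ may occur. The key technical innovation must be to exploit the FKG lattice structure of the random cluster measure, combined with the rescaling made available by the field dynamics, to obtain an influence bound that is uniform over all possible conditionings. Getting both spectral independence \emph{and} the marginal lower bounds needed for the entropy step, in a form robust enough to also apply to the conditional sub-instances arising in field dynamics, is where the heart of the analysis will lie.
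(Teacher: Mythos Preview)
Your plan has the right flavor but two concrete gaps that would prevent it from going through as stated.

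First, the field dynamics is applied to the wrong object. Field dynamics tilts a distribution on $2^U$ by $\theta^{-\abs{X}}$; applied to the Ising spin distribution on $U=V$ this rescales the \emph{external fields} $\*\lambda \mapsto \theta^{-1}\*\lambda$, not the edge activities. So the inner instance never becomes ``high temperature'' in the sense $\theta\*\beta<\beta_c(\Delta)$; indeed $\*\beta$ is untouched (and is unbounded above while $\beta_c(\Delta)\to 1$ for large $\Delta$, so no uniform $\theta$ could work anyway). The paper instead runs field dynamics on the \emph{random cluster} distribution over $2^E$. Tilting there replaces $p_e$ by $p_e^\star=p_e/(p_e+\theta(1-p_e))$, and by taking $\theta\approx K p_{\min}/\log n$ one forces $1-p^\star_{\min}=O(K/\log n)$: the inner sub-instance is a random cluster model in an extreme \emph{low}-temperature regime, not a high-temperature Ising model.

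Second, the step ``spectral independence plus marginal lower bounds $\Rightarrow$ near-optimal Glauber mixing on RC'' does not go through on graphs of unbounded degree, because the RC marginals are \emph{not} uniformly bounded below: conditional on the rest of the configuration, the probability that an edge $e=(u,v)$ is absent is $(1-p_e)(\*\lambda^{C_u}+\*\lambda^{C_v})/\bigl(1+\*\lambda^{C_u\cup C_v}+(1-p_e)(\*\lambda^{C_u}+\*\lambda^{C_v})\bigr)$, which vanishes when $\abs{C_u},\abs{C_v}\to\infty$. The paper therefore does \emph{not} derive the inner-step mixing from the entropic machinery at all; it proves it directly by a coupling-from-stationary / path-coupling argument that is specific to the regime $1-p^\star_{\min}=O(1/\log n)$, where a ``good event'' (no small component with a large boundary) holds with high probability and one-step contraction can be checked by hand. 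Spectral independence of the RC model is used only to bound the mixing of the \emph{outer} field dynamics, and the paper proves it not via FKG but via a coupling-independence argument on the subgraph-world model, lifted to RC through the coupling of~\cite{feng2022sampling}.
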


\begin{remark}
  Let $\mu$ be the Gibbs distribution of the Ising model with parameters $\*\beta \in \mathbb{R}_{>1}^E$ and $\*\lambda \in \mathbb{R}_{>1}^V$, and define $\overline{\mu}$ by letting $\overline{\mu}(S) = \mu(V \setminus S)$ for each $S \subseteq V$.
  Note that $\overline{\mu}$ is the Gibbs distribution of the Ising model with parameters $\*\beta$ and $\*\lambda^{-1}  \in (0,1)^V$.
  Therefore, we can sample from $\overline{\mu}$ via the sampler in \Cref{thm:main}, which implies a sampler for $\mu$. 
\end{remark}

Compared to previous works, our algorithm could handle general instances while it only takes a near-linear running time when parameters are bounded away from $\*1$.
We give a detailed comparison between \Cref{thm:main} and previous results in \Cref{tab:comparison}.

\begin{table}[h!]
  \centering
  \begin{tabular}{|c|c|c|}
    \hline
    results & running time & requirements \\
    \hline
    \multirow{2}{*}{\cite{jerrum1993polynomial}}
    & $\widetilde{O}_{\*\beta, \*\lambda}(m^3)$ & $\*\lambda < 1$ \\
    \cline{2-3}
    & $\widetilde{O}_{\*\beta}(n^4m^3)$ & $\*\lambda = 1$ \\
    \hline
    \cite{liu2019ising} & $n^{O_{\*\lambda}(\log \Delta)}$ & $\*\lambda < 1$ \\
    \cline{1-2}
    \cite{chen2021spectral} & $\Delta^{O_{\*\beta, \*\lambda}(\Delta)} \cdot \widetilde{O}(m)$ & bounded degree \\
    \hline
    \multirow{2}{*}{\cite{chen2021fast}}
     & \multirow{2}{*}{$\widetilde{O}_{\*\beta, \alpha}(n)$} & $\alpha$-expander, $\*\lambda = 1$ \\
     & & $\*\beta = \Omega(\log(\Delta)/\alpha)$ \\
    \hline
    This work (\Cref{thm:main}) & $\widetilde{O}_{\*\beta,\*\lambda}(m)$ & $\*\lambda < 1$ \\
    \hline
  \end{tabular}
  \caption{Comparison of running times and requirements for Ising samplers}
  \label{tab:comparison}
\end{table}

As in \cite{guo2018random, feng2022sampling}, we leverage the Edwards-Sokal coupling~\cite{edwards1988gen} (see~\Cref{prop:ES-coupling}), which reduces the task of sampling from the distribution of the ferromagnetic Ising model to the random cluster model (see~\Cref{sec:def-RC} for formal definition).
\begin{theorem} \label{thm:alg-RC}
  Let $\delta_{p}, \delta_{\lambda} \in (0,1)$ be constants and $\mu$ be the distribution of a random cluster model specified by graph $G=(V,E)$, parameters $\*p \in [\delta_p,1)^E$ and $\*\lambda \in [0,1-\delta_{\lambda}]^V$.
  There is an algorithm that samples $X$ satisfying $d_{\-{TV}}(X,\mu) \leq \epsilon$ for any given parameter $\epsilon>0$ within running time
  \[
    m \cdot \tp{\log n}^{O_{\delta_{p},\delta_{\lambda}}(1)},
  \]
  where $m$ is the number of edges and $n$ is the number of vertices.
\end{theorem}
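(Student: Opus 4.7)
The plan is to implement the field dynamics framework of~\cite{chen2021rapid} directly on the random cluster distribution $\mu = \mu_{\*p,\*\lambda}$. Field dynamics is a Markov chain whose single step consists of first selecting a random ``free'' subset by an i.i.d.\ Bernoulli process with some bias $\theta = \theta(\delta_p,\delta_\lambda) \in (0,1)$, then resampling the configuration on the free subset conditioned on the frozen part. The key observation is that the conditional distribution appearing in the resampling step is itself a random cluster distribution, with rescaled parameters that push it into a regime where the effective external field is further bounded away from~$1$. A direct calculation, analogous to the Ising case, verifies that $\mu$ is stationary for this chain.

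Next, I would upper bound the mixing time of the field dynamics by $O(\log(n/\epsilon))$. For this I plan to establish \emph{spectral independence} of $\mu$ throughout the parameter regime $\*p \in [\delta_p,1)^E$, $\*\lambda \in [0,1-\delta_\lambda]^V$: the maximum eigenvalue of the pairwise influence matrix should be bounded by a constant depending only on $\delta_p$ and $\delta_\lambda$. Combined with a local-to-global argument in the spirit of~\cite{chen2021rapid}, and a suitable handling of the fact that the RC chain lives on edges rather than vertices, spectral independence transfers into rapid mixing of the field dynamics chain with only a mild warm-start penalty.

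The remaining task is to implement each step of the field dynamics in near-linear time. The conditional resampling step reduces to sampling from an auxiliary random cluster distribution on a subgraph, now in a low-temperature regime in which the Glauber dynamics on the RC model can be shown to be rapidly mixing. I would prove this mixing bound so that each step can be simulated in time $m \cdot (\log n)^{O_{\delta_p,\delta_\lambda}(1)}$. The mixing bound itself is again derived from spectral independence, bootstrapped to an essentially optimal range via standard entropy-decay arguments, exploiting that $\*p \geq \delta_p$ and $\*\lambda \leq 1-\delta_\lambda$ keep edge marginals bounded away from degeneracy. Multiplying the per-step cost by the $O(\log(n/\epsilon))$ outer iterations yields the stated running time.

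The main obstacle is the proof of spectral independence for the random cluster model itself. Unlike the Ising model, the RC model is defined on edges and its interactions are global through the cluster structure, which obstructs any direct correlation-decay analysis on a self-avoiding walk tree. I expect the cleanest route is to leverage the Edwards--Sokal coupling (\Cref{prop:ES-coupling}) to compare pairwise influences in the RC model with those in the associated ferromagnetic Ising model, for which spectral independence is known in the target regime. The delicate technical part will be controlling the blow-up in influence caused by conditioning through monochromatic edges, and then propagating the resulting bound uniformly across all intermediate random cluster distributions that appear during the field dynamics recursion and during the Glauber analysis on the low-temperature subproblem.
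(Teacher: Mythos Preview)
Your high-level architecture---field dynamics as the outer loop, with Glauber resampling on a low-temperature random cluster subproblem inside---matches the paper exactly. But both of the technical ingredients you propose have genuine gaps.

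For spectral independence, routing through the Ising model via Edwards--Sokal does not work as stated. The RC influence matrix is indexed by edges while the Ising one is indexed by vertices, so the coupling gives you a joint law, not a comparison between the two influence matrices; more importantly, degree-independent spectral independence of the ferromagnetic Ising model with arbitrary $\*\beta>1$ and $\*\lambda<1$ is \emph{not} a known input---it is essentially equivalent to what this paper is establishing. The paper instead passes through the \emph{subgraph-world} model: it builds an explicit recursive coupling (\Cref{alg:coupling}) witnessing coupling independence of $\mu^{\mathrm{SW}}$ with constant $\tfrac{1}{2\eta_{\min}^2}$ (\Cref{lm:sg-coupling}), then lifts this to the RC model (\Cref{lm:lifting-no-pin}) via the coupling of \Cref{lem:couple-mu-nu}. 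The resulting bound $2(1-\lambda_{\max})^{-2}$ is crucially independent of $\*p$, which is required because \Cref{lem:field-dynamics-mixing} demands spectral independence of $\*\lambda*\mu$ for \emph{all} tilts $\*\lambda\in\mathbb{R}_{>0}^E$; a bound depending on $\delta_p$ would not suffice.

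For the inner Glauber step, your plan to deduce $O(m\log m)$ mixing from spectral independence plus entropy decay ``exploiting that edge marginals are bounded away from degeneracy'' fails precisely because they are not: after the field-dynamics tilt by $\theta^{-1}$, the rescaled parameters satisfy $p^\star_e = p_e/(p_e+\theta(1-p_e)) \to 1$, so edge marginals approach $1$ and the standard marginal-boundedness boosting is unavailable. The paper handles this subproblem by a completely different argument: path coupling inside a high-probability ``good'' set $\+G$ (\Cref{lem:pc-good}), together with a stochastic-domination argument showing the chain rarely leaves $\+G$ (\Cref{lem:bad-event-bound}, \Cref{lm:st-dominate}). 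The parameter $\theta$ is tuned so that $(1-p^\star_{\min})\log n$ is small enough to make this coupling contract, which is exactly the opposite of a bounded-marginal regime.
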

The proof of \Cref{thm:main} assuming \Cref{thm:alg-RC} is deferred to \Cref{sec:main-proof}.

In the recent progress on high-dimensional expansion and the analysis of Glauber dynamics, a new Markov chain called \emph{field dynamics} has played an important role~\cite{chen2021rapid, anari2021entropicII, chen2022optimal, chen2022localization}.
The field dynamics was originally used to obtain a boosted optimal spectral gap or modified log-Sobolev constant from a sub-critical regime. 
However, it turns out that the field dynamics could also be used to design fast sampler.
A recent work \cite{anari2021entropicII} used field dynamics with interleaved systematic scans to build a fast sampler for the hardcore model.
In our result, \Cref{thm:alg-RC} is another example in which field dynamics is used as an algorithmic tool to design fast sampler.
The proof of \Cref{thm:alg-RC} is outlined in \Cref{sec:proof-outline}.

The key ingredients in \Cref{thm:alg-RC} are to establish
spectral independence of the random cluster model for graphs with both bounded and unbounded
maximum degree, and to prove the mixing results for the random cluster model in a low-temperature
regime.
The previous work~\cite{chen2021spectral} established an $O_{\Delta, \*p, \*\eta}(1)$ bound for the spectral independence of the subgraph-world model (see \Cref{sec:def-SG}) by using the analytical property offered by the zero-freeness region, which leads to a $\Delta^{O_{\*\beta, \*\lambda}(\Delta)}$ factor in the running time of the sampler for the Ising model or the random cluster model.
In this work, we remove the dependency on $\Delta$ and prove an $O_{\*\eta}(1)$ spectral independence for the subgraph-world model by using a novel coupling based argument (see \cref{lm:sg-coupling}).
Unlike the previous analysis in~\cite{chen2021spectral}, this coupling based argument enables us to lift the spectral independence bound from the subgraph-world model to the random cluster model by using the nature coupling between these models.
Therefore, we are able to prove the first $O_{\*\lambda}(1)$ bound for the spectral independence of the random cluster model (see \cref{lem:inf-RC}).
Finally, in \cref{sec:good-regime}, we use the coupling with stationary argument to show that the Glauber dynamics on the random cluster model mixes rapidly in a low-temperature regime (i.e. when $p_{\min}$ is close to $1$).
With these ingredients, we develop a near-linear time sampler via the field dynamics for the random cluster model on general graphs with both bounded
and unbounded maximum degree.

We remark that the recently updated version~\cite{feng2022samplingb} of~\cite{feng2022sampling} proved the optimal mixing time of
the Glauber dynamics of the random cluster model when the fields are bounded away from $\*1$ and the
maximum degree of graphs is bounded by a universal constant. Their proof is also based on the
high-dimensional expander.

\subsection{Open problems}
In this paper, we developed a near-linear time sampler for Ising models with parameters $\*\beta > 1$ and $\*\lambda<1$ (or symmetrically, $\*\lambda > 1$). 
It still leaves several open problems.
\begin{itemize}
  \item Develop a good sampler for the ferromagnetic Ising model when field $\*\lambda = \*1$. 
        Our algorithm fails due to the exponential reliance on the gap of $\*\lambda$ and $\*1$, which stems from the analysis based on high dimensional expander technique. 
        Therefore, It is still tempting to surpass Jerrum and Sinclair's algorithm~\cite{jerrum1993polynomial} in this case.
  \item Better analysis on the Glauber dynamics and the Swendsen-Wang dynamics of the random cluster model. 
        These simple yet powerful dynamics are of great interests in the study of random cluster model \cite{gore1999swendsen,guo2018random,galanis2019swendsen,blanca2021swendsen,feng2022sampling}.
        Though, the current mixing time bounds for these dynamics on general graphs are still far from optimal. 
        We hope our techniques and results could be an inspiration for works in this field. 
\end{itemize}


\section{Preliminaries}

\subsection{Notation}
Let $\mu$ be a distribution over $2^U$ for some ground set $U$, and $\tau,\Lambda$ be subsets of $U$.
$\+P_{\tau,\Lambda}$ denote the set of configurations $S \subseteq U$ that agree with $\tau$ on $\Lambda$, i.e.
\begin{align*}
  \+P_{\tau,\Lambda} = \{S \subseteq U \mid S \cap \Lambda = \tau \cap \Lambda\}.
\end{align*}
The distribution conditional on event $\+E \subseteq 2^U$ is defined by\footnote{Event $\+E$ must satisfy $\mu(\+E) = \sum_{S \in \+E} \mu(S) > 0$.}
\begin{align*}
  \forall S \subseteq U, \quad \mu(S \mid \+E) =
  \begin{cases}
    \frac{\mu(S)}{\mu(\+E)} & \text{if }S \in \+E,\\
    0& \text{otherwise.}
  \end{cases}
\end{align*}
For simplicity, we denote by $\mu\tp{\cdot \mid i}$ (resp. $\mu(\cdot \mid \overline{i})$) the distribution $\mu\tp{\cdot \mid \+P_{\{i\}, \{i\}}}$ (resp. $\mu\tp{\cdot \mid \+P_{\emptyset, \{i\}}}$) projected on $U \setminus \{i\}$ for some $i \in U$.
Furthermore, we denote by $\mu(i)$ (resp. $\mu(\overline{i})$) be the probability $\Pr[S \sim \mu]{i \in S}$ (resp. $\Pr[S \sim \mu]{i \not \in S}$).

Let $\*p \in [0,1]^U$. The distribution $\mu = \bigotimes_{i \in U} \mathrm{Ber}(p_i)$ is defined by
\begin{align*}
  \forall S \subseteq E, \quad \mu(S) = \prod_{i \in S} p_i \prod_{i \not\in S} (1-p_i).
\end{align*}

We will write $\*1$ be the constant vector with value $1$, and $\*1_u = ({\*1}_{i = u})_{i \in U}$ for some $u \in U$.
Lastly, let $X, Y \subseteq U$ be two subsets of the ground set $U$, we use $X \oplus Y := (X\setminus Y) \cup (Y\setminus X)$ to denote the symmetric difference between $X$ and $Y$.
\subsection{Markov chains, entropy and mixing time}

\subsubsection{Basic definitions}
Let $(X_t)_{t \in \mathbb{N}}$ be a Markov chain over a finite state space $\Omega$ with transition matrix $P=(p_{x,y})_{x,y \in \Omega} \in \mathbb{R}_{\ge 0}^{\Omega \times \Omega}$. 
$(X_t)_{t \in \mathbb{N}}$ is \emph{irreducible}, if for any $x,y \in \Omega$, there exists $t > 0$, such that $P^{t}(x,y) > 0$.
$(X_t)_{t \in \mathbb{N}}$ is \emph{aperiodic}, if for any $x \in \Omega$, $\gcd \left\{ t \in \mathbb{N}_{>0} \mid P^{t}(x,x)>0 \right\} = 1$. 
A distribution $\mu$ over $\Omega$ is a \emph{stationary distribution} of $(X_t)_{t \in \mathbb{N}}$, if $\mu=\mu P$.
The fundamental theorem of Markov chain says that a Markov chain $(X_t)_{t \in \mathbb{N}}$ has a unique stationary distribution, if the Markov chain is irreducible and aperiodic.
A distribution $\mu$ over $\Omega$ is \emph{reversible} with respect to $(X_t)_{t \in \mathbb{N}}$, if $\mu$ satisfies the \emph{detailed balance condition}, i.e. $\mu(x)P(x,y)=\mu(y)P(y,x)$ holds for all $x,y \in \Omega$. 
It is known that $\mu$ is the stationary distribution of $(X_t)_{t \in \mathbb{N}}$ if $\mu$ is reversible with respect to the Markov chain.

Let $\mu, \nu$ be two distributions over the finite state space $\Omega$, the \emph{total variation distance} is defined as
\begin{align*}
  d_{\mathrm{TV}}(\mu,\nu) = \max_{S \subseteq \Omega} \abs{\mu(S) - \nu(S)} = \frac{1}{2} \sum_{\sigma \in \Omega} \abs{\nu(\sigma)-\mu(\sigma)}. 
\end{align*}
Suppose $\mu$ is the stationary distribution of a Markov chain $(X_t)_{t \in \mathbb{N}}$ with transition matrix $P$. 
The \emph{mixing time} initialized from configuration $X_0$ is defined by
\begin{align*}
  T_{\mathrm{mix}}(\epsilon, X_0) = \min \{t \in \mathbb{N} \mid d_{\mathrm{TV}}(P^t(X_0,\cdot), \mu) < \epsilon\}.
\end{align*}
The \emph{mixing time} is defined by $T_{\mathrm{mix}}(\epsilon) = \max_{X_0 \in \Omega} T_{\mathrm{mix}}(\epsilon,X_0)$.
\subsubsection{Glauber dynamics}
Let $\mu$ be a distribution over $2^U$ on a finite ground set $U$. 
One of the most famous single-site dynamics is the \emph{Glauber dynamics} (a.k.a. Gibbs sampler).
In each step, the Glauber dynamics updates configuration $X \in 2^U$ according to the following rules:
\begin{itemize}
  \item pick $i \in U$ uniformly at random;
  \item update $X$ according to distribution $\mu\tp{\cdot \mid \+P_{X, U\setminus\{i\}}}$.
\end{itemize}
It can be verified that $\mu$ is reversible with respect to Glauber dynamics.


\subsubsection{Entropy decay and mixing time}
The relation between the functional inequalities and the mixing time of the Markov chain has been studied in literature~\cite{diaconis1996log, bobkov2006modified}. We now introduce the decay of the relative entropy and its implication on the mixing time of the Markov chain.

Let $\mu,\nu$ be distributions over finite state set $\Omega$ and $\nu$ is absolutely continuous with respect to $\mu$. The \emph{relative entropy} (known as \emph{Kullback-Leibler divergence}) between $\nu$ and $\mu$ is defined as 
\begin{align*}
  D_{\mathrm{KL}}(\nu \parallel \mu) = \sum_{\sigma \in \Omega} \nu(\sigma) \log \tp{\frac{\nu(\sigma)}{\mu(\sigma)}},
\end{align*}
with convention $0\cdot \infty = 0$.

Let $(X_t)_{t \in \mathbb{N}}$ be a Markov chain with transition matrix $P$ and stationary distribution $\mu$. 
The relative entropy \emph{decays with rate $\alpha$}, if for any distribution $\nu$ absolutely continuous with respect to $\mu$,
\begin{align*}
  D_{\mathrm{KL}}(\nu P \parallel \mu P)\le (1-\alpha) D_{\mathrm{KL}}(\nu \parallel \mu).
\end{align*}
It is known (see, for example,~\cite[Lemma 2.4]{blanca2022entropy})
that the mixing time $T_{\mathrm{mix}}(\epsilon, X_0)$ satisfies
\begin{align}\label{eq:mixing-decay}
  T_{\mathrm{mix}}(\epsilon, X_0) \le \alpha^{-1}\tp{\log \log \frac{1}{\mu(X_0)} + \log \frac{1}{2\epsilon^2}}.
\end{align}

\subsection{Subgraph-world model, random cluster model, and relation of models}


\subsubsection{Generalized subgraph-world model} \label{sec:def-SG}
Let $G=(V,E)$ be an undirected graph, $\*p\in [0,1]^E, \*\eta \in [0,1]^V$ and $\sigma \in \{0,1\}^V$. 
The weight of a configuration $S \subseteq E$ in the generalized subgraph-world model is defined by:
\begin{align*}
  w^{\-{GSW}}_{E,\*p, \*\eta, \sigma}(S) := \prod_{e \in S} p_e \prod_{f \in E \setminus S} (1-p_f) \prod_{\substack{v \in V\\ \abs{S \cap E_v} \equiv \sigma_v \tp{\text{mod } 2}}} \eta_v,
\end{align*}
where $E_v$ denotes the set of edges that is incident to $v$. 
For ease of notation, we may use $\*p^{S}, (1 - \*p)^{E\setminus S}$ to denote $\prod_{e\in S} p_e$ and $\prod_{f \in E\setminus S}(1 - p_f)$, respectively.
The distribution $\mu^{\mathrm{GSW}}_{E, \*p, \*\eta, \sigma}$ on $2^{E}$ is
\begin{align*}
  \forall S \subseteq E, \quad \mu^{\-{GSW}}_{E,\*p,\*\eta,\sigma}(S) := \frac{w^{\-{GSW}}_{E,\*p,\*\eta,\sigma}(S)}{Z^{\-{GSW}}_{E,\*p,\*\eta,\sigma}},
\end{align*}
where $Z^{\-{GSW}}_{E,\*p,\*\eta,\sigma} := \sum_{S \subseteq E} w^{\-{GSW}}_{E,\*p,\*\eta,\sigma}(S)$ is the partition function of this system
\footnote{$Z^{\mathrm{GSW}}_{E,\*p,\*\eta,\sigma}$ may equal to zero. In this case, the system is invalid and we will not consider such case.}.
We remark that when $\*p' \in [0, 1]^{E'}$, where $E' \supseteq E$ is a superset of $E$, the distribution $\mu^{\-{GSW}}_{E, \*p', \*\eta, \sigma}$ is defined by 
\begin{align*}
  \mu^{\-{GSW}}_{E, \*p', \*\eta, \sigma} = \mu^{\-{GSW}}_{E, \*p'|_E, \*\eta, \sigma},
\end{align*}
where $\*p'|_E$ is the vector obtained by restricting $\*p'$ to $E$.
When $\sigma = \*1$, our definition matches the definition of the subgraph-world model~\cite{jerrum1993polynomial,feng2022sampling}. 
In this case, we may denote the distribution and partition function by $\mu^{\mathrm{SW}}_{E,\*p,\*\eta}$ and $Z^{\mathrm{SW}}_{E,\*p,\*\eta}$ instead.

\subsubsection{Random cluster model} \label{sec:def-RC}
Let $G=(V,E)$ be an undirected graph, $\*p\in [0,1]^E, \*\lambda \in [0,1]^V$ be parameters. 
The weight of a configuration $S \subseteq E$ in the random cluster model is defined by:
\begin{align*}
  w^{\-{RC}}_{E,\*p,\*\lambda}(S) := \prod_{e \in S} p_e \prod_{f \in E \setminus S} (1-p_e) \prod_{C \in \kappa(V, S)} \tp{1+\prod_{j \in C} \lambda_j}, 
\end{align*}
where we use $\kappa(V, S)$ to denote the set of connected components of graph $(V, S)$.
The distribution $\mu^{\-{RC}}_{E,\*p,\*\lambda}$ is defined by
\begin{align*}
  \forall S \subseteq E, \quad \mu^{\-{RC}}_{E,\*p,\*\lambda}(S) := \frac{w^{\-{RC}}_{E,\*p,\*\lambda}(S)}{Z^{\-{RC}}_{E,\*p,\*\lambda}},
\end{align*}
where $Z^{\-{RC}}_{E,\*p,\*\lambda} := \sum_{S \subseteq E} w^{\-{RC}}_{E,\*p,\*\lambda}(S)$ is the partition function of this system.
When $\*\lambda = \*1$, our definition matches the classical definition of the random cluster model with $q=2$~\cite{fortuin1972random-I}.

\subsubsection{Relation between models}
A well-known holographic transformation~\cite{jerrum1993polynomial,valiant2008holographic} connects the partition functions of the ferromagnetic Ising model, the subgraph-world model and the random cluster model.
\begin{proposition}[\text{\cite[Lemma 2.1]{feng2022sampling}}] \label{prop:partition-func}
  Let $G=(V,E)$ be a graph, $\*\beta \in (1,+\infty)^E$ and $\*\lambda \in [0,1]^V$ be parameters, then
  \begin{align*}
    Z^{\mathrm{Ising}}_{\*\beta,\*\lambda} = \tp{\prod_{e \in E} \beta_e} Z^{\mathrm{RC}}_{E,\*p,\*\lambda} = \tp{\prod_{e \in E} \beta_e}\tp{\prod_{v \in V} (1+\lambda_v)} Z^{\mathrm{SW}}_{E,\*p/2,\*\eta},
  \end{align*} 
  where $\*p = 1-\*\beta^{-1} = \tp{1-\beta_e^{-1}}_{e \in E}$ and $\*\eta = \tp{\frac{1-\lambda_v}{1+\lambda_v}}_{v \in V}$.
\end{proposition}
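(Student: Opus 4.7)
The plan is to prove the two equalities separately, each by a standard algebraic manipulation of the Ising partition function.

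\textbf{First equality: $Z^{\mathrm{Ising}}_{\*\beta,\*\lambda} = \prod_e \beta_e \cdot Z^{\mathrm{RC}}_{E,\*p,\*\lambda}$.} The idea is the Edwards--Sokal/Fortuin--Kasteleyn expansion. For each edge $e$, write the monochromaticity factor as
\[
\beta_e^{\mathbf{1}[e\in m(S)]} = 1 + (\beta_e - 1)\,\mathbf{1}[e \in m(S)],
\]
and expand $\prod_e(\cdots)$ over subsets $T\subseteq E$. Swapping the order of summation gives
\[
Z^{\mathrm{Ising}}_{\*\beta,\*\lambda} = \sum_{T \subseteq E} \prod_{e \in T}(\beta_e - 1) \sum_{\substack{S\subseteq V \\ T \subseteq m(S)}} \prod_{v \in S} \lambda_v.
\]
The inner sum factorizes over the connected components of $(V,T)$, since $T \subseteq m(S)$ forces each component $C\in\kappa(V,T)$ to lie entirely inside or outside of $S$, yielding $\prod_{C}\bigl(1+\prod_{j\in C}\lambda_j\bigr)$. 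Using $p_e = 1-\beta_e^{-1}$ so that $(\beta_e - 1) = p_e\beta_e$ and $1 = (1-p_e)\beta_e$, one can pull out the factor $\prod_e\beta_e$, and what remains is exactly $w^{\mathrm{RC}}_{E,\*p,\*\lambda}(T)$ summed over $T$.

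\textbf{Second equality: $\prod_e\beta_e \cdot Z^{\mathrm{RC}}_{E,\*p,\*\lambda} = \prod_e\beta_e \cdot \prod_v(1+\lambda_v) \cdot Z^{\mathrm{SW}}_{E,\*p/2,\*\eta}$.} For this, I would reparametrize spins using $\tau_v \in \{\pm 1\}$ and apply the high-temperature expansion. Using $\mathbf{1}[\tau_u=\tau_v] = (1+\tau_u\tau_v)/2$ and its analogue at each vertex, one verifies the $\pm 1$ identities
\[
\beta_e^{(1+\tau_u\tau_v)/2} = \frac{\beta_e+1}{2} + \tau_u\tau_v\,\frac{\beta_e-1}{2}, \qquad \lambda_v^{(1+\tau_v)/2} = \frac{1+\lambda_v}{2} + \tau_v\,\frac{\lambda_v-1}{2}.
\]
Factoring out the constant terms and writing $t_e = (\beta_e-1)/(\beta_e+1)$ and $-\eta_v = (\lambda_v-1)/(\lambda_v+1)$, expansion of $\prod_e(1+\tau_u\tau_v t_e)\prod_v(1+\tau_v(-\eta_v))$ gives a sum over pairs $(T,R)$ with $T\subseteq E$, $R\subseteq V$, and the monomial $\prod_{v} \tau_v^{\deg_T(v)+\mathbf{1}[v\in R]}$. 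Summing over $\tau\in\{\pm 1\}^V$ annihilates every term except those where $R=\{v:\deg_T(v)\text{ odd}\}$, contributing a factor of $2^n$. Since the number of odd-degree vertices in any graph is even by the handshake lemma, the sign ambiguity $(-\eta_v)^{|R|}$ becomes $\prod \eta_v$. Converting the constants via $p_e = 1-\beta_e^{-1}$ (so $p_e/2 = (\beta_e-1)/(2\beta_e)$ and $1-p_e/2 = (\beta_e+1)/(2\beta_e)$) matches the result exactly to $\prod_e \beta_e \cdot \prod_v (1+\lambda_v) \cdot Z^{\mathrm{SW}}_{E,\*p/2,\*\eta}$.

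The only mildly delicate step is bookkeeping the sign in the subgraph-world translation: one must observe that $h_v := -\eta_v$ enters with the same parity as $|R|$, and that $|R|$ is always even, so no negative signs survive. Everything else is routine algebraic expansion and reindexing, and both derivations are entirely self-contained given the definitions in Sections~\ref{sec:def-SG} and~\ref{sec:def-RC}. I expect no additional structural difficulty beyond these identities; the proposition is thus proved by chaining the two equalities above.
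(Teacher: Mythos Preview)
Your argument is correct. The paper itself does not give a proof of this proposition; it is stated with a citation to an external reference, so there is no in-paper proof to compare against. Your two derivations---the Fortuin--Kasteleyn expansion for the first equality and the high-temperature ($\pm 1$ spin) expansion for the second, together with the handshake-lemma parity observation to kill the sign---are the standard routes to these identities and are essentially what one would find in the cited source.
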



The standard Edwards-Sokal coupling connects the Gibbs distribution of the ferromagnetic Ising model and the distribution of the random cluster model.
\begin{proposition}[\text{\cite[Proposition 2.3]{feng2022sampling}}] \label{prop:ES-coupling}
  Let $G = (V, E)$ be a graph, $\*\beta \in (1,+\infty)^E$ and $\*\lambda \in [0, 1]^V$ be parameters.
  Furthermore, let $\*p = 1 - \*\beta^{-1} = (1 - \beta_e)_{e\in E}$. 
  Construct $Y$ according to the following rules.
  \begin{itemize}
  \item initialize $Y = \emptyset$ and sample $X \sim \mu^{\-{RC}}_{E, \*p, \*\lambda}$;
  \item for each $C \in \kappa(V, X)$, add $C$ to $Y$ with probability $\frac{\prod_{i \in C} \lambda_i}{1 + \prod_{i \in C} \lambda_i}$.
  \end{itemize}
  Then, it holds that $Y \sim \mu^{\-{Ising}}_{\*\beta, \*\lambda}$.
\end{proposition}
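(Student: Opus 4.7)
The plan is to prove the proposition via a joint distribution on $2^E \times 2^V$ whose two marginals are, respectively, the random cluster measure and the Ising measure, and whose conditional of $Y$ given $X$ matches the component-flipping rule in the statement. Concretely, define
\begin{align*}
  \nu(X,Y) \;\propto\; \prod_{e \in X} p_e \prod_{e \in E\setminus X}(1-p_e)\;\prod_{v \in Y}\lambda_v \;\cdot\; \mathbb{1}\bigl[X \subseteq m(Y)\bigr],
\end{align*}
where $m(Y) = \{e \in E : e \cap Y = e \text{ or } e \cap Y = \emptyset\}$ is the set of $Y$-monochromatic edges. The indicator forbids any edge in $X$ to cross the cut induced by $Y$, which is the standard Edwards--Sokal compatibility condition.

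First I would compute the $X$-marginal by fixing $X$ and summing over $Y$. Since $X \subseteq m(Y)$ holds iff every connected component of $(V,X)$ is monochromatically in or out of $Y$, each component $C \in \kappa(V,X)$ contributes independently a factor $(1 + \prod_{i \in C}\lambda_i)$, yielding
\begin{align*}
  \sum_Y \nu(X,Y) \;\propto\; \prod_{e \in X} p_e \prod_{e \in E\setminus X}(1-p_e)\;\prod_{C \in \kappa(V,X)}\Bigl(1 + \prod_{i \in C}\lambda_i\Bigr) \;=\; w^{\-{RC}}_{E,\*p,\*\lambda}(X).
\end{align*}
Hence the $X$-marginal of $\nu$ is $\mu^{\-{RC}}_{E,\*p,\*\lambda}$, and moreover the conditional distribution of $Y$ given $X$ is exactly: include each component $C$ independently with probability $\prod_{i\in C}\lambda_i / (1 + \prod_{i\in C}\lambda_i)$, matching the sampling rule in the proposition.

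Next I would compute the $Y$-marginal by fixing $Y$ and summing over $X \subseteq m(Y)$. Every monochromatic edge $e \in m(Y)$ contributes $p_e + (1-p_e) = 1$ and every bichromatic edge contributes the forced factor $(1-p_e) = \beta_e^{-1}$ (using $p_e = 1 - \beta_e^{-1}$), so
\begin{align*}
  \sum_X \nu(X,Y) \;\propto\; \prod_{v \in Y}\lambda_v \prod_{e \notin m(Y)} \beta_e^{-1} \;=\; \Bigl(\prod_{e \in E}\beta_e^{-1}\Bigr)\prod_{v \in Y}\lambda_v \prod_{e \in m(Y)}\beta_e \;\propto\; \mu^{\-{Ising}}_{\*\beta,\*\lambda}(Y).
\end{align*}
Thus both marginals are properly normalized (the normalizing constants are consistent by \Cref{prop:partition-func}, which asserts $Z^{\-{Ising}}_{\*\beta,\*\lambda} = \prod_e \beta_e \cdot Z^{\-{RC}}_{E,\*p,\*\lambda}$), and the $Y$-marginal of $\nu$ is $\mu^{\-{Ising}}_{\*\beta,\*\lambda}$.

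Combining these two observations gives the proposition: sampling $X \sim \mu^{\-{RC}}_{E,\*p,\*\lambda}$ and then sampling $Y$ from the conditional $\nu(\cdot \mid X)$ produces $(X,Y) \sim \nu$, whose $Y$-marginal is $\mu^{\-{Ising}}_{\*\beta,\*\lambda}$. There is no real obstacle here; the only delicate point is checking that the proportionality constants on the two sides actually agree, which is exactly the content of the holographic identity in \Cref{prop:partition-func}, so the proof reduces to invoking that identity after the bookkeeping above.
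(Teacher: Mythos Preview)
Your proof is correct and is the standard Edwards--Sokal argument. Note, however, that the paper does not actually supply its own proof of this proposition: it is stated as a citation of \cite[Proposition 2.3]{feng2022sampling} and used as a black box (for example in \Cref{sec:main-proof}). So there is nothing in the paper to compare your argument against; what you wrote is essentially the proof one would find in the cited reference.

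One minor remark: you do not need to invoke \Cref{prop:partition-func} to reconcile the normalizing constants. Since $\nu$ is defined with a single proportionality constant and both marginal computations are exact identities up to that same constant, the fact that each marginal is a probability distribution forces it to equal $\mu^{\-{RC}}_{E,\*p,\*\lambda}$ and $\mu^{\-{Ising}}_{\*\beta,\*\lambda}$ respectively; the partition-function identity is then a \emph{consequence} of your computation rather than an input to it.
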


\section{Proof outline} \label{sec:proof-outline}
In this section, we outline the proof of \Cref{thm:alg-RC}. 
We first summarize previous results of the field dynamics in \Cref{sec:field-dynamics}, and then introduce the field dynamics simulator, prove its correctness as an approximate sampler, and analyze its running time in \Cref{sec:simulation}.

\subsection{Field dynamics}\label{sec:field-dynamics}
The \emph{field dynamics} is an adaptive select-update dynamics, first introduced in~\cite{chen2021rapid}. 
  Intuitively, the field dynamics serves as a reduction from a critical instance to a sub-critical instance, i.e., instance that are much easier to handle.

Let $\mu$ be a distribution over $2^U$ on ground set $U$ and $\*\lambda \in \mathbb{R}_{> 0}^U$. 
Denote by $\*\lambda * \mu$ the distribution over $2^U$ satisfying
\begin{align*}
  \forall S \subseteq U, \quad (\*\lambda * \mu)(S) \propto \*\lambda^{S} \mu(S),
\end{align*}
where $\*\lambda^S = \prod_{i \in S} \lambda_i$.
In particular, if $\*\lambda$ is a constant vector with $\*\lambda_u = \lambda$, we may write $\lambda* \mu$ instead.

The field dynamics $P^{\-{FD}}_\theta$ with parameter $\theta \in (0,1)$ in each round updates a configuration $X \in 2^U$ according to the following rules:
\begin{itemize}
  \item sample $S' \sim \bigotimes_{u \in U} \-{Ber}(\theta)$ and let $S = S' \cup X$;
  \item update $X$ according to distribution $(\theta^{-1} * \mu) \tp{\cdot \mid \+P_{X,U \setminus S}}$,
\end{itemize}
where we recall the event $\+P_{X,Y} = \{T \subseteq U \mid T \cap Y = X \cap Y\}$.

  It has been showed in~\cite{chen2021rapid} that the field dynamics $P^{\-{FD}}_\theta$ has the stationary distribution $\mu$.
  We note that the only non-trivial step in the field dynamics is to sample from a new distribution $(\theta^{-1} * \mu) \tp{\cdot \mid \+P_{X,U \setminus S}}$.
  A key intuition of the field dynamics is that $(\theta^{-1} * \mu) \tp{\cdot \mid \+P_{X,U \setminus S}}$ might be relatively easy to sample from when we choose a good parameter $\theta$.
  Hence, when the field dynamics itself is rapid mixing, it actually reduces the task of sampling from $\mu$ to an easier task of sampling from $(\theta^{-1} * \mu) \tp{\cdot \mid \+P_{X,U \setminus S}}$.
  Our algorithm for the random cluster model is based on this idea where we use a Glauber dynamics to generate approximate samples from $(\theta^{-1} * \mu) \tp{\cdot \mid \+P_{X,U \setminus S}}$ so as to sample from the original distribution (see \cref{alg:field} and \cref{alg:resample} for the details).
  Moreover, We also establish the rapid mixing of field dynamics via the spectral independence.


In recent years, there is a long line of works~\cite{anari2020spectral,chen2020optimal,chen2021rapid,anari2021entropic,blanca2021mixing, anari2021entropicII,chen2022localization,chen2022optimal} establishing the relation between the mixing time of select-update dynamics and the spectral independence.
We first introduce the notion of spectral independence. 
\begin{definition}[influence matrix, \cite{anari2020spectral}]
  Let $\mu$ be a distribution over $2^U$ on ground set $U$ of size $n$. 
  The influence matrix $\Psi^{\mathrm{inf}}_{\mu}$ is an $n$ by $n$ matrix defined as
  \begin{align*}
    \forall i,j \in U ,\quad \Psi^{\mathrm{inf}}_\mu(i,j) = \begin{cases}
    \mu(i \mid j) - \mu(i \mid \overline{j}) & \text{if } i \neq j \text{ and } 0<\Pr[S \sim \mu]{j \in S}<1,\\
    0 & \text{otherwise.}
    \end{cases}
  \end{align*}
\end{definition}

\begin{definition}[spectral independence in infinite norm]\label{def:si-inf}
  Let $\mu$ be a distribution over $2^U$ on ground set $U$. The distribution $\mu$ is $C$-spectrally independent, if 
  \begin{align*}
    \norm{\Psi^{\-{inf}}_{\mu}}_\infty \le C. 
  \end{align*}
  Furthermore, $\mu$ is $C$-spectrally independent under all pinnings, if for any $\tau, \Lambda \subseteq U$ with $\mu(\+P_{\tau,\Lambda})>0$, $\mu(\cdot \mid \+P_{\tau,\Lambda})$ projected on $U \setminus \Lambda$ is $C$-spectrally independent.
\end{definition}
We remark the notion of spectral independence in \Cref{def:si-inf} is stronger than that in~\cite{anari2020spectral}, where the distribution $\mu$ is $C$-spectrally independent, if $\lambda_{\max}(\Psi^{\-{inf}}_{\mu}) \le C$.

In recent progress of high-dimensional expansion and the analysis of Glauber dynamics on anti-ferromagnetic two-state spin systems, the following entropy decay result for field dynamics is established~\cite{anari2021entropicII, chen2021optimalIsing, chen2022localization, chen2022optimal}.
\begin{lemma}[entropy decay of field dynamics]\label{lem:field-dynamics-mixing}
  Let $\mu$ be a distribution over $2^U$ on ground set $U$. If $\*\lambda * \mu$ is $C$-spectrally independent under all pinnings for all $\*\lambda \in \mathbb{R}_{>0}^U$, then for any $\theta \in (0,1)$ and distribution $\nu$ absolutely continuous with respect to $\mu$, let $\kappa = \tp{\theta/\e}^{C + 3}$, it holds that
  \begin{align*}
    D_{\-{KL}}(\nu P^{\-{FD}}_{\theta} \parallel \mu P^{\-{FD}}_\theta) \le (1-\kappa) D_{\-{KL}}(\nu \parallel \mu).
  \end{align*}
\end{lemma}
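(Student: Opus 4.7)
The plan is to derive the field-dynamics contraction from the ``local-to-global'' paradigm by interpreting the field dynamics as a block-type heat flow and applying an entropy factorization inequality. I would work within the framework of entropic independence, which is the natural bridge that converts spectral independence of all tilts $\*\lambda * \mu$ into an inequality controlling the decay of relative entropy under a single step of a coordinate-resampling Markov chain.

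First, I would rewrite one step of $P^{\-{FD}}_\theta$ as a two-stage procedure aligned with the chain rule of relative entropy. Given current state $X$, draw a resample window $S = X \cup S'$ where $S' \sim \bigotimes_{u \in U} \-{Ber}(\theta)$, then redraw $X$ from $(\theta^{-1} * \mu)(\cdot \mid \+P_{X, U \setminus S})$. Writing $\nu$ as the current density with respect to $\mu$, the standard chain-rule decomposition of KL yields
\begin{align*}
  D_{\-{KL}}(\nu \parallel \mu) \;-\; D_{\-{KL}}\tp{\nu P^{\-{FD}}_\theta \parallel \mu P^{\-{FD}}_\theta} \;\ge\; \E{D_{\-{KL}}\tp{\nu(\cdot \mid S^{c}) \parallel \mu(\cdot \mid S^{c})}},
\end{align*}
where the expectation is over the marginal law of the unchanged complement $S^{c}$ induced by one step. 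The task therefore reduces to a block factorization of entropy: lower-bounding the right-hand side by $\kappa \cdot D_{\-{KL}}(\nu \parallel \mu)$ for $\kappa = \tp{\theta/\e}^{C+3}$.

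Next I would invoke the machinery of entropic independence and stochastic localization developed in \cite{anari2021entropicII, chen2022localization}. The assumption that $\*\lambda * \mu$ is $C$-spectrally independent under every pinning and every external field is exactly the input required to deduce that $\mu$ is $C$-entropically independent, and that this property is preserved under further tilting and pinning. A local-to-global argument then lifts single-site entropic contraction to block entropic contraction at scale $\theta$. The cleanest realization is to couple the field dynamics to a down-up walk on a homogenization of $\mu$ with roughly $1/\theta$ copies per coordinate: spectral independence at every link of the lifted complex translates, by a telescoping argument along the levels, into a contraction factor of $\tp{\theta/\e}^{C+O(1)}$ for one field-dynamics step.

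The main obstacle will be the quantitative bookkeeping. The precise exponent $C+3$ emerges from combining the per-level entropic contraction with a geometric summation over the levels of the homogenized complex, together with the Bernoulli heat-flow identity for the $\theta$-external-field operation; obtaining the claimed exponent rather than a larger constant requires the sharp form of the entropic-independence-implies-factorization theorem. A secondary subtlety is verifying that the ``all pinnings and all external fields'' hypothesis propagates intact through the homogenization, so that every link of the lifted complex inherits the same spectral independence constant $C$; this is where the strength of assuming spectral independence under all $\*\lambda \in \mathbb{R}_{>0}^U$, rather than a single tilt, is genuinely used.
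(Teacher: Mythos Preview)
Your proposal is correct and follows essentially the same route as the paper: both write one step of the field dynamics as a down--up walk, reduce the entropy contraction to the $\theta$-magnetized block factorization of entropy (your chain-rule inequality is exactly the identity $D_{\-{KL}}(\nu\parallel\mu)-D_{\-{KL}}(\nu P^\downarrow\parallel\mu P^\downarrow)=\E_R\bigl[\Ent[\pi^{\*1_R}]{f}\bigr]$ the paper establishes), and then close with the data processing inequality. The only difference is that the paper quotes the block factorization with the sharp constant $\kappa=(\theta/\e)^{C+3}$ as a black box from \cite[Lemma~2.3]{chen2021optimalIsing}, whereas you sketch its derivation via entropic independence and stochastic localization.
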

For completeness, we include the proof of \Cref{lem:field-dynamics-mixing} in \Cref{append:field}.
By~\eqref{eq:mixing-decay}, this entropy decay result implies a tight bound on the mixing time of field dynamics.

\subsection{Field dynamics simulator}\label{sec:simulation}

We are now ready to introduce the field dynamics simulator for random cluster model. 
Parameters $\theta$, $T^{\mathrm{FD}}$, and $T^{\mathrm{GD}}$ are to be determined.\\

\begin{algorithm}[H]
  \SetKwInOut{Input}{input}
  \SetKwInOut{Output}{output}

  \Input{Graph $G=(V,E)$, parameters $\*p \in (0,1)^E$, $\*\lambda \in [0,1)^V$ and $\epsilon \in (0,1)$}
  \Output{a random configuration $X \subseteq E$ satisfying $d_{\mathrm{TV}}(X,\mu^{\mathrm{RC}}_{E,\*p,\*\lambda}) < \epsilon$}
  Initialize $X = E$;\\
  \For{$t \gets 1$ \KwTo $T^{\mathrm{FD}}$}{
    draw $S' \sim \bigotimes_{e \in E} \mathrm{Ber}(\theta)$ and let $S = S' \cup X$;\label{line:draw-S}\\
    update $X \gets \text{Resample}(G'=(V,S),\*p^{\star},\*\lambda,\tp{2T^{\mathrm{FD}}}^{-1} \epsilon)$, where $p^\star_e = \frac{p_e}{\theta(1-p_e)+p_e}, \forall e \in S$.\label{line:update-block} 
  }
  \Return{X}
  \caption{field dynamics simulator for random cluster model}\label{alg:field}
\end{algorithm}

\begin{algorithm}
  \SetKwInOut{Input}{input}
  \SetKwInOut{Output}{output}

  \Input{Graph $G=(V,E)$, parameters $\*p \in (0,1)^E$, $\*\lambda \in [0,1)^V$, and $\epsilon \in (0,1)$}
  \Output{a random configuration $X \subseteq E$ satisfying $d_{\mathrm{TV}}(X,\mu^{\mathrm{RC}}_{E,\*p,\*\lambda}) < \epsilon$}
    Initialize $X = E$;\\
    \For{$t \gets 1$ \KwTo $T^{\mathrm{GD}}$}{
      draw $e \in E$ uniformly at random;\\
      update $X$ according to $\mu^{\mathrm{RC}}_{E,\*p,\*\lambda}\tp{\cdot \mid \+P_{X,E \setminus \{e\}}}$.\label{line:gd-update}\\ 
    }
    \Return{$X$}
  \caption{Resample$(G$, $\*p$, $\*\lambda$, $\epsilon)$}\label{alg:resample}
\end{algorithm}

\begin{remark}\label{rem:field}
    In \Cref{alg:field}, since $S = S' \cup X$, it holds that $ X \cap (E\setminus S) = \emptyset $.
    This means all the elements in $E\setminus S$ are in the ``out'' state in $X$.
    So, it is straight forward to verify that
  \begin{align*}
    \tp{\theta^{-1} * \mu^{\mathrm{RC}}_{E,\*p,\*\lambda}}\tp{\cdot \mid \+P_{X,E \setminus S}}=\mu^{\mathrm{RC}}_{S,\*p^\star,\*\lambda}(\cdot),
  \end{align*}
  where $p^\star_e = \frac{p_e}{p_e+\theta(1-p_e)}$ for all $e \in S$. 
  Hence, when $\abs{V} > N_0$, \Cref{alg:field} is exactly the field dynamics assuming \Cref{alg:resample} being a perfect sampler.
  Though in our implementation, \Cref{alg:resample} returns approximate samples and causes biases.
\end{remark}

Let $\mu^{\mathrm{RC}}_{E,\*p,\*\lambda}$ be the distribution of the random cluster model specified by graph $G=(V,E)$, parameters $\*p \in (0,1)^E$ and $\*\lambda \in [0,1)^V$. 
Furthermore, let 
\begin{align*}
  p_{\min} = \min_{e \in E} p_e  \text{ and }  \lambda_{\max} = \max_{v \in V} \lambda_{v}.
\end{align*}
We first state the mixing time results for both field dynamics and Glauber dynamics, and then prove \Cref{thm:alg-RC} with these results.

\begin{lemma}\label{lem:mixing-field}
  The mixing time of the field dynamics initialized from $E$ satisfies
  \begin{align*}
    \forall \epsilon \in (0,1), \quad T(\epsilon,E) \leq \tp{\frac{\e}{\theta}}^{5(1-\lambda_{\max})^{-2}}\tp{2 \log n + \log \log \frac{2}{p_{\min}} + \log \frac{1}{2\epsilon^2}}.
  \end{align*}
  
\end{lemma}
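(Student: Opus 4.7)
The overall strategy is to combine the entropy contraction of \Cref{lem:field-dynamics-mixing} with the entropy-to-mixing inequality~\eqref{eq:mixing-decay}, applied to the distribution $\mu^{\mathrm{RC}}_{E,\*p,\*\lambda}$ and the starting configuration $X_0 = E$. Two ingredients are needed: (i) the spectral-independence hypothesis of \Cref{lem:field-dynamics-mixing}, and (ii) a lower bound on $\mu^{\mathrm{RC}}_{E,\*p,\*\lambda}(E)$ that controls the $\log\log(1/\mu(X_0))$ term appearing in~\eqref{eq:mixing-decay}.

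For (i), the plan is to reduce the hypothesis to an ``unconditional'' spectral-independence statement for the random cluster model. First, for any $\*w \in \mathbb{R}_{>0}^E$, the tilted distribution $\*w * \mu^{\mathrm{RC}}_{E,\*p,\*\lambda}$ is itself a random cluster model on $(V,E)$ with the same vertex fields $\*\lambda$ and modified edge parameters $p'_e$ satisfying $p'_e/(1-p'_e) = w_e\, p_e/(1-p_e)$. Second, pinning an edge $e=\{u,v\}$ to ``in'' corresponds to contracting $e$ (the merged vertex carries field $\lambda_u \lambda_v \le \lambda_{\max}$), while pinning to ``out'' corresponds to deletion; both operations yield a random cluster model whose vertex fields are still bounded by $\lambda_{\max}$. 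Consequently the entire hypothesis of \Cref{lem:field-dynamics-mixing} follows from the paper's structural input: every random cluster model with vertex fields $\le \lambda_{\max}$ is $C$-spectrally independent in the $\ell_\infty$ sense with $C = 2(1-\lambda_{\max})^{-2}$. Invoking \Cref{lem:field-dynamics-mixing} then gives entropy contraction at rate
\begin{align*}
  \kappa \;=\; \left(\frac{\theta}{\e}\right)^{C+3} \;\ge\; \left(\frac{\e}{\theta}\right)^{-5(1-\lambda_{\max})^{-2}},
\end{align*}
where we used $(1-\lambda_{\max})^{-2} \ge 1$ to absorb the additive $3$ into the exponent.

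For (ii), I would use the crude but sufficient estimates $Z^{\mathrm{RC}}_{E,\*p,\*\lambda} \le 2^n$, which follows from bounding each cluster factor by $1+\prod_j \lambda_j \le 2$ and summing $\prod_{e\in S} p_e \prod_{e \notin S}(1-p_e)$ to $1$, and $w^{\mathrm{RC}}_{E,\*p,\*\lambda}(E) \ge \prod_{e\in E} p_e \ge p_{\min}^m$. Together with $m \le n^2$ these give $\log(1/\mu^{\mathrm{RC}}_{E,\*p,\*\lambda}(E)) \le n^2 \log(2/p_{\min})$, hence $\log\log(1/\mu(E)) \le 2\log n + \log\log(2/p_{\min})$. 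Plugging this estimate and the rate $\kappa$ from (i) into~\eqref{eq:mixing-decay} yields exactly the announced bound.

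The only non-routine step is the spectral-independence input behind (i): an $\ell_\infty$ (not merely spectral-radius) bound on the influence matrix of the random cluster model, uniformly over all edge weights, all vertex fields bounded by $\lambda_{\max}$, and all pinnings, with dependence $O\big((1-\lambda_{\max})^{-2}\big)$. The global cluster factor $\prod_{C}(1+\prod_{j\in C}\lambda_j)$ makes pairwise edge influences long-range, so the proof cannot be a local two-state computation; this is the technically substantive contribution treated separately in the paper and that I take as a black box for the purpose of this lemma.
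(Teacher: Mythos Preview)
Your proposal is correct and follows essentially the same route as the paper: combine \Cref{lem:SI-RC}, \Cref{lem:field-dynamics-mixing}, and~\eqref{eq:mixing-decay}, together with the bound $\mu^{\mathrm{RC}}_{E,\*p,\*\lambda}(E)\ge (p_{\min}/2)^{n^2}$. The only cosmetic differences are that the paper obtains $Z^{\mathrm{RC}}_{E,\*p,\*\lambda}\le 2^n$ via \Cref{prop:partition-func} rather than your direct cluster-factor estimate, and that your contraction/deletion discussion is not strictly needed since \Cref{lem:SI-RC} already asserts spectral independence \emph{under all pinnings}; your explicit verification that $\*w*\mu^{\mathrm{RC}}_{E,\*p,\*\lambda}$ is again a random cluster model (hence covered by \Cref{lem:SI-RC} with the same $\lambda_{\max}$) is a step the paper leaves implicit.
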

This mixing time result is a corollary of \Cref{lem:field-dynamics-mixing} and the spectral independence of $\mu^{\mathrm{RC}}_{E,\*p,\*\lambda}$.
\begin{lemma} \label{lem:SI-RC}
  $\mu^{\mathrm{RC}}_{E,\*p,\*\lambda}$ is $2(1-\lambda_{\max})^{-2}$-spectrally independent under all pinnings.
\end{lemma}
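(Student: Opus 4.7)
The plan is to reduce the random cluster spectral independence bound to estimates on an associated ferromagnetic Ising model via the Edwards--Sokal coupling (\Cref{prop:ES-coupling}), and then exploit the fact that the field is bounded away from $\*1$ to obtain a geometric decay of influence. First, I would observe that the ``under all pinnings'' requirement is essentially free: conditioning the RC measure on any $\tau, \Lambda \subseteq E$ yields an RC measure of the same form on a graph obtained by contracting edges pinned to ``in'' and deleting edges pinned to ``out'', with the external field vector unchanged, so $\lambda_{\max}$ never grows under pinnings. Thus it suffices to bound $\|\Psi^{\-{inf}}_{\mu^{\-{RC}}_{E,\*p,\*\lambda}}\|_\infty \leq 2(1-\lambda_{\max})^{-2}$ in the unpinned case.

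Fix an edge $e = \{u, v\}$ and work inside the Edwards--Sokal joint measure $\pi(X, Y)$ on $2^E \times 2^V$. A short calculation directly from the RC weight formula shows that the conditional law of $Y$ given $e \in X$ is the ferromagnetic Ising measure $\pi_1$ on $G$ constrained to $Y_u = Y_v$, while the conditional law given $e \notin X$ is the ferromagnetic Ising measure $\pi_2$ on $G \setminus \{e\}$. Since given $Y$ the edges of $X$ are conditionally independent Bernoullis supported only on monochromatic edges, one obtains for every $f = \{u', v'\} \neq e$
\begin{align*}
  \mu^{\-{RC}}(f \mid e) - \mu^{\-{RC}}(f \mid \overline{e}) = p_f \bigl( \pi_1[Y_{u'} = Y_{v'}] - \pi_2[Y_{u'} = Y_{v'}] \bigr).
\end{align*}
By the FKG inequality each term is nonnegative, so the absolute values can be dropped and the target reduces to $\sum_{f \neq e} p_f ( \pi_1[Y_{u'} = Y_{v'}] - \pi_2[Y_{u'} = Y_{v'}] ) \leq 2(1-\lambda_{\max})^{-2}$.

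To bound this sum I would construct an explicit monotone coupling of $\pi_1$ and $\pi_2$ and argue that the ``disagreement cluster'' rooted at the two endpoints of $e$ has expected size at most $(1-\lambda_{\max})^{-2}$ per endpoint. Concretely, I would explore the disagreement vertex by vertex: at each step, extending the cluster by a new vertex $w$ incurs a multiplicative cost at most $\lambda_w \leq \lambda_{\max}$ coming from the Ising field, while the monochromaticity indicator on a new edge $f$ contributes at most a factor $p_f$ that is absorbed into the probability of traversing $f$. Summing the resulting telescoping expansion over cluster depth yields $\sum_{k \geq 1} k \lambda_{\max}^{k-1} = (1-\lambda_{\max})^{-2}$, and the factor of $2$ comes from the two endpoints of $e$. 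The main obstacle is to formalize this branching argument uniformly over all graphs: since there is no bound on the maximum degree and the edge parameters $p_e$ may be arbitrarily close to $1$, the decay must come purely from $\lambda_{\max}$, so the exploration must be designed carefully and the ferromagnetic correlations controlled via a stochastic domination by a subcritical branching process whose offspring weight is exactly $\lambda_{\max}$. A clean alternative would be an inductive proof along a telescoping sequence of contract/delete operations that reduces $\pi_1$ to $\pi_2$ one edge at a time, at the cost of a cleaner but more technical derivative estimate in the Ising model.
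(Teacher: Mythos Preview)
Your pinning reduction and the Edwards--Sokal identity $\mu^{\-{RC}}(f \mid e) - \mu^{\-{RC}}(f \mid \overline{e}) = p_f\bigl(\pi_1[Y_{u'} = Y_{v'}] - \pi_2[Y_{u'} = Y_{v'}]\bigr)$ are fine. The gap is in the final step: the branching process with ``offspring weight exactly $\lambda_{\max}$'' is asserted but never constructed, and I do not see how to extract it from an Ising coupling. A monotone coupling of $\pi_1$ and $\pi_2$ gives a vertex disagreement set $D$, and any edge $f$ whose monochromaticity status changes must be incident to $D$; translating back, your edge sum is controlled only by $\sum_{w}\Pr{w\in D}\sum_{f\ni w}p_f$, a \emph{degree-weighted} vertex sum. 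With no bound on the maximum degree and with $p_f$ allowed to be arbitrarily close to $1$, bounding the expected size of $D$ is not enough, and in a standard Ising disagreement percolation the edge-traversal weights are governed by the $\beta_e$'s rather than by $\*\lambda$, so the claimed subcriticality has no evident source. The sentence ``the $p_f$ factor is absorbed into the probability of traversing $f$'' hides precisely the missing mechanism.

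The paper avoids this obstacle by not passing through Ising spins at all. It works in the subgraph-world model $\mu^{\-{SW}}_{E,\*p/2,\*\eta}$ on $2^E$ with $\*\eta=\frac{1-\*\lambda}{1+\*\lambda}$, where conditioning on $e$ versus $\overline e$ amounts to flipping the parity constraint at the two endpoints of $e$. A recursive coupling (\Cref{alg:coupling}) reveals edges along a walk: each time the walk enters a fresh vertex $u$, the two conditional laws coalesce with probability at least $1-\frac{1-\eta_u}{1+\eta_u}$, and the edges on which the coupled samples differ have both endpoints among the visited vertices (\Cref{prop:geometric}, \Cref{prop:edge-bound}). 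This gives $\frac{1}{2\eta_{\min}^2}$-coupling independence for $\mu^{\-{SW}}$ with no dependence on degree or on $\*p$; the coupling is then lifted to $\mu^{\-{RC}}$ via the Bernoulli coupling of \Cref{lem:couple-mu-nu}, and coupling independence implies the spectral bound (\Cref{ci-implies-si}). The passage to the parity-constrained edge model is exactly what converts the degree-weighted vertex sum in your approach into a geometric tail on a single path of visited vertices.
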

The proof of \Cref{lem:mixing-field} assuming \Cref{lem:SI-RC} is deferred to \Cref{sec:field-proof}.

\begin{remark}
Establishing spectral independence is a challenging task.
A series of works\cite{chen2020rapid,blanca2021mixing, liu2021coupling,chen2021spectral, abdolazimi2021matrix} establish spectral independence via different approaches, including correlation decay, path coupling, stability of polynomial, and the trickle-down phenomenon.
An $O_{\Delta,\*p,\*\eta}(1)$ spectral independence result for the subgraph-world model was established in~\cite{chen2022spectral}.
This result has the dependency on $\Delta$ and does not imply a spectral independence bound for the random cluster model.
Our method is quite different from the previous approaches for bounding the spectral independence.
In contrast to previous works, we establish a better spectral independence result for the subgraph-world model that is independent of $\Delta$ by a sophisticated coupling procedure.
This coupling procedure also enables us to lift the spectral independence result from the subgraph-world model to the random cluster model.
As far as we know, \cref{lem:SI-RC} gives the first spectral independence bound for the random cluster model.
The details will be described in \Cref{sec:field}. 

\end{remark}




\begin{lemma} \label{lem:GD}
  If it holds that
  \begin{align*}
    \tp{1-p_{\min}}\log n \leq \min\left\{10^{-7}, \frac{1-\lambda_{\max}}{27} \right\},
  \end{align*}
  then the mixing time of the Glauber dynamics satisfies
  \begin{align*}
    \forall \epsilon \in (0,1), \quad T_{\-{mix}}(\epsilon) \le 25m \log m \log \frac{1}{\epsilon} + 1.
  \end{align*}
\end{lemma}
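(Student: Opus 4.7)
}

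The plan is to prove the mixing time bound via the path coupling method of Bubley and Dyer. The target bound $2m(\log m + \log(2/\epsilon))$ matches what path coupling produces when the per-step expected contraction rate is $1/(2m)$ for Hamming-adjacent configurations and the Hamming diameter is $m$. Consider a pair $X, Y \subseteq E$ with $X \oplus Y = \{e_0\}$, say $X = Y \cup \{e_0\}$. Couple the two Glauber steps by selecting the same edge $e \in E$ uniformly at random and updating at $e$ via the monotone grand coupling of the two conditional distributions; the existence of such a coupling follows from the stochastic monotonicity of $\mu^{\mathrm{RC}}_{E,\*p,\*\lambda}$ with respect to the natural partial order on boundary configurations. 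If $e = e_0$ (probability $1/m$), the two chains resample from identical conditional distributions, so they couple at $e_0$ and the distance drops to $0$. If $e \neq e_0$, the pair may newly disagree at $e$ with probability exactly
\[
\Delta_e \;=\; \mu^{\mathrm{RC}}_{E,\*p,\*\lambda}(e \in \cdot \mid X \setminus \{e\}) - \mu^{\mathrm{RC}}_{E,\*p,\*\lambda}(e \in \cdot \mid Y \setminus \{e\}),
\]
so $\mathbb{E}[d(X', Y')] \leq 1 - 1/m + (1/m)\sum_{e \neq e_0} \Delta_e$. The task reduces to showing $\sum_{e \neq e_0} \Delta_e \leq 1/2$, which yields the required contraction rate and hence the mixing time bound.

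To bound $\Delta_e$, observe that $\Delta_e > 0$ only when $e_0$ is a bridge of the component of $(V, X \setminus \{e\})$ containing $e$'s endpoints; otherwise removing $e_0$ leaves the component structure around $e$ unchanged, and both conditional marginals equal $p_e$. In the bridge case, writing $a, b$ for the $\*\lambda$-products of the two components into which $e_0$ splits the relevant component, an explicit calculation of the Glauber marginals gives
\[
\Delta_e \;=\; \frac{p_e(1-p_e)(a+b)}{(1-p_e)(1+a)(1+b) + p_e(1+ab)} \;\leq\; 2(1-p_e)\lambda_{\max}.
\]
Summing naively over all $e$ gives $O(m(1-p_{\min}))$, which is not small enough. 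The structural refinement is to note that the edges $e$ with $\Delta_e > 0$ are organized around the component decomposition created by removing $e_0$: each such $e$ crosses the cut between the two halves, and its associated weight $a+b$ telescopes nicely when summed over $e$ grouped by their crossing structure. Combined with the low-temperature hypothesis $(1-p_{\min})\log n \leq (1-\lambda_{\max})/27$, this bounds the total by a geometric series in $\lambda_{\max}$ times $(1-p_{\min})\log n$, well below $1/2$.

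The main obstacle is obtaining the uniform bound $\sum_{e \neq e_0} \Delta_e \leq 1/2$ for every adjacent pair $(X,Y)$, since path coupling requires worst-case rather than average-case contraction, and the structural argument above really uses that $X$ does not contain an atypically large bridge structure. The cleanest fix is a censoring/restriction argument: define a good set $\Omega^\star$ of configurations with sparse defect sets $E \setminus X$, run path coupling on a chain censored to $\Omega^\star$ (where the bridge-weighted sum is controlled uniformly), and absorb the censoring error into the total variation distance. The second hypothesis $(1-p_{\min})\log n \leq 10^{-5}\exp(-\log(8/\epsilon)/\log n)$ is exactly the concentration bound needed to ensure that the Glauber chain started from $E$ remains in $\Omega^\star$ throughout the $O(m \log m)$ coupling window except with probability at most $\epsilon/2$; the choice of initial state $E$ places both coupled chains safely inside $\Omega^\star$ at time $0$, and the tiny constant $10^{-5}$ provides room for all the union-bound overheads.
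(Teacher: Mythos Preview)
Your overall architecture matches the paper: path coupling to get a $1/(2m)$ contraction rate, but only on a ``good'' set of configurations, combined with a bad-event bound that consumes the second hypothesis. However, your case analysis for $\Delta_e$ has a real gap. You claim $\Delta_e>0$ only when $e_0$ is a bridge of \emph{the} component of $(V,X\setminus\{e\})$ containing $e$'s endpoints, and that otherwise both marginals equal $p_e$. This is false when the endpoints $x,y$ of $e$ lie in \emph{different} components of $(V,X\setminus\{e\})$: neither marginal equals $p_e$ (each is the bond-percolation-type expression involving $\*\lambda^{C_x},\*\lambda^{C_y}$), and the presence of $e_0$ can change $C_x$ or $C_y$ without connecting them, giving $\Delta_e\neq 0$. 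The paper handles this as its Case~(3) and bounds it by $(1-p_{\min})\lambda_{\max}^{|A|}$ where $A$ is the affected component; the sum over such $e$ again splits into an ``$e$ already present'' sub-case (telescoping over a spanning tree) and an ``$e$ absent'' sub-case (controlled by the good-set edge-boundary condition). Your single bridge formula and the bound $2(1-p_e)\lambda_{\max}$ do not cover this.

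On the second ingredient, the paper does not censor. It uses the Hayes coupling-with-stationary lemma: couple the chain from $X_0=E$ with a chain from $Y_0\sim\mu$, and show that \emph{both} stay in the good set $\+G=\{X:\forall S\in\+C,\ X\cap E(S,V\setminus S)\neq\emptyset\}$ at every step with probability $\geq 1-n^{\log(27K)}$. The chain from $E$ is handled as you suggest, but the stationary chain requires a separate stochastic-domination argument (both $X_t$ and $\mu$ dominate a product Bernoulli with parameter $1-3K$). Your proposal mentions only the chain from $E$, and the censoring framing obscures the need to control the other copy. Also, the precise definition of the good set matters: the bound $|E(C,V\setminus C)|\leq |C|\log n$ for every small component $C$ is exactly what makes the ``$e\notin X_t$'' sub-cases summable, and is where the $\log n$ in the hypothesis enters; your description (``telescopes nicely'') is too vague to pin this down.
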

The proof of \Cref{lem:GD} involves a coupling with stationary argument, which will be presented in \Cref{sec:good-regime}.
We are now ready to prove \Cref{thm:alg-RC}.

\begin{proof}[Proof of \Cref{thm:alg-RC}]
  We choose parameters in \Cref{alg:field} and \Cref{alg:resample} as follows:
  
  \begin{center}
    \begin{tabular}[h]{c|c}
      \hline
      parameter & value \\
      \hline
      $\theta$ & $\frac{\min \left\{10^{-7}, \frac{1-\lambda_{\max}}{27}\right\} p_{\min}}{\log n}$ \\
      $T^{\-{FD}}$ & $\left\lceil \tp{\frac{\e}{\theta}}^{5(1-\lambda_{\max})^{-2}}\tp{2 \log n + \log \log \frac{2}{p_{\min}} + \log \frac{2}{\epsilon^2}} \right\rceil$ \\
      $T^{\-{GD}}$ & $\left\lceil 25m \log m \log \frac{2T^{\mathrm{FD}}}{\epsilon} \right\rceil + 1$ \\
      \hline
    \end{tabular}
  \end{center}

  
  To prove \Cref{thm:alg-RC}, it suffices to show that:
  \begin{enumerate}
    \item\label{req:item-1} The sample $X$ returned from \Cref{alg:field} satisfies $d_{\mathrm{TV}}(X,\mu^{\mathrm{RC}}_{E,\*p,\*\lambda}) < \epsilon$.
    \item\label{req:item-2} The overall running time can be achieved in $m \tp{\epsilon^{-1} \cdot \delta_p^{-1} \cdot \log n}^{O\tp{\delta_\lambda^{-2}}}$.
  \end{enumerate}

  First, we prove requirement~\eqref{req:item-1}.
  For convenience, let $\widehat{P^{\-{FD}}_{\theta}}$ denote the transition matrix of the simulation algorithm (\Cref{alg:field}) and $\mu = \mu^{\-{RC}}_{E, \*p, \*\lambda}$.
   Let $(X_t)_{0 \le t \le T^{\-{FD}}}$ (respectively, $(Y_t)_{0 \le t \le T^{\-{FD}}}$) be the Markov chain starting from configuration $X_0 = E$ (respectively, $Y_0=E$) with transition matrix $\widehat{P^{\-{FD}}_{\theta}}$ (respectively, $P^{\-{FD}}_\theta$).
   It holds that
   \[
     \DTV{X_{T^{\-{FD}}}}{\mu}
     \leq \DTV{X_{T^{\-{FD}}}}{Y_{T^{\-{FD}}}} + \DTV{Y_{T^{\-{FD}}}}{\mu}
     \leq \Pr{X_{T^{\-{FD}}} \neq Y_{T^{\-{FD}}}} + \DTV{Y_{T^{\-{FD}}}}{\mu}.
   \]
   Note that $X_{T^{\-{FD}}} \neq Y_{T^{\-{FD}}}$ is equivalent to that there exists $1 \leq i \leq T^{\-{FD}}$ such that $X_i \neq Y_i$ but $X_j = Y_j$ for all $j < i$.
  Hence, 
  \begin{align*}
     \Pr{X_{T^{\-{FD}}} \neq Y_{T^{\-{FD}}}}
     &= \sum_{i=1}^{T^{\-{FD}}} \Pr{X_i \neq Y_i \text{ and } \forall j < i, X_j = Y_j} 
     \leq \sum_{i=1}^{T^{\-{FD}}} \Pr{X_i \neq Y_i \mid X_{i-1} = Y_{i-1}}.
   \end{align*}
    For any $1 \le i \le T^{\-{FD}}$, consider the following coupling of $X_i$ and $Y_i$ condition on $X_{i-1} = Y_{i-1}$:
    \begin{enumerate}
      \item Generate set $S' \sim \bigotimes_{u \in U} \mathrm{Ber}(\theta)$ and $S = S'\cup X_{i-1}$, i.e., the first step of the field dynamics;
      \item Generate $X_i$ and $Y_i$ according to the optimal coupling of distribution $\mu^{\-{RC}}_{S,\*p^\star,\*\lambda}$ and $\widetilde{\mu^{\-{RC}}_{S,\*p^\star,\*\lambda}}$, where $\widetilde{\mu^{\-{RC}}_{S,\*p^\star,\*\lambda}}$ is the distribution generated by Resample$((V, S), \*p^\star, \*\lambda, (2T^{\-{FD}})^{-1}\epsilon)$.
    \end{enumerate}
  Hence, we have
  \begin{align*}
     \Pr{X_i \neq Y_i \mid X_{i-1} = Y_{i-1}} &\leq \max_{S\subseteq E} \DTV{\widetilde{\mu^{\-{RC}}_{S,\*p^\star,\*\lambda}}}{\mu^{\-{RC}}_{S, \*p^\star, \*\lambda}} =: \epsilon'.
   \end{align*}
   Note that $(1-p^\star_{\min}) \log n \le \frac{\theta \log n}{p_{\min}} \le \min \left\{10^{-7}, \frac{1-\lambda_{\max}}{27}\right\}$. By \Cref{lem:GD} and our choice of $T^{\mathrm{GD}}$, $\epsilon' \leq \frac{\epsilon}{2T^{\-{FD}}}$. Therefore, by \Cref{lem:mixing-field} and our choice of $T^{\-{FD}}$, it holds that 
   \begin{align*}
   \DTV{X_{T^{\-{FD}}}}{\mu} \leq T^{\-{FD}}\epsilon' + \epsilon/2 \leq \epsilon.
   \end{align*}
   This proves the requirement~\eqref{req:item-1}.

  Lastly, we verify requirement~\eqref{req:item-2}. 
  In each iteration of the field dynamics in \Cref{alg:field}, we need $O(m)$ time to generate the set $S$.
  This consumes $mT^{\-{FD}}$ time.
  Besides, the algorithm needs $T^{\mathrm{FD}}$ iterations of $T^{\mathrm{GD}}$-steps Glauber dynamics starting from configuration $X=S$ on distribution $\mu^{\mathrm{RC}}_{S,\*p^\star,\*\lambda}$ for some $S \subseteq E$.
  We claim that each step of the Glauber dynamics could be implemented in $\-{polylog}(n)$ time, then the total running time is
  \begin{align} \label{eq:time-raw}
    mT^{\-{FD}} + T^{\-{FD}} \cdot T^{\-{GD}} \cdot \-{polylog(n)}.
  \end{align}
  By our choice of $T^{\-{FD}}$ and $T^{\-{GD}}$, it holds that
  \begin{align*}
    T^{\-{FD}} = \tp{(1-\lambda_{\max})^{-1} \cdot p_{\min}^{-1} \cdot \log n}^{O((1-\lambda_{\max})^{-2})} \text{ and } T^{\-{GD}} \leq 25 m \log m \log \frac{2T^{\-{FD}}}{\epsilon} + 2.
  \end{align*}
  Together with $\lambda_{\max} \le 1-\delta_{\lambda}$ and $p_{\min} \ge \delta_{\beta}$, the total running time~\eqref{eq:time-raw} could be bounded by
  \begin{align*}
    m \tp{\delta_\lambda^{-1} \cdot \delta_p^{-1} \cdot \log n}^{O\tp{\delta_\lambda^{-2}}}.
  \end{align*}

  We only left to show that each step of the Glauber dynamics could be implemented in $\-{polylog}(n)$ time.
  Suppose the current configuration is $X$,
  the Glauber dynamics will first use $O(\log n)$ time to draw a random edge $e = (u, v) \in E$.
  Let $C_u = C_u(X)$ and $C_v = C_v(X)$ be the connected components in graph $(V,X \setminus \{e\})$, containing $u$ and $v$, respectively.
  Then the probability $p_{X,e}$ that $X$ will be updated by $X \cup \{e\}$ is
  \begin{align}\label{eq:rc-transition}
    p_{X,e} = 
    \begin{cases}
      p & \text{if } C_u = C_v, \\
      \frac{1+\*\lambda^{C_u \cup C_v}}{1+\*\lambda^{C_u \cup C_v} + (1-p^\star_e)\tp{\*\lambda^{C_u} + \*\lambda^{C_v}}} & \text{otherwise},
    \end{cases}
  \end{align}
  where $\*\lambda^R = \prod_{i \in R} \lambda_i$ for $R \subseteq V$. 
  In order to calculate $p_{X, e}$ fast, we need a data structure that supports the following operations:
  \begin{itemize}
    \item update $X\gets X \cup \{e\}$ for any given $e \in E$;
    \item update $X\gets X \setminus \{e\}$ for any given $e\in E$;
    \item query if $C_u(X) = C_v(X)$ for any given $u,v \in V$;
    \item query $\*\lambda^{C_u(X)}$ for any given $u \in V$.
  \end{itemize}
  These updates and queries can all be handled in $O(\log^2 n)$ amortized time by the data structure in \cite[Section 3]{wulff2013faster}. This concludes the proof of requirement~\eqref{req:item-2} and \Cref{thm:alg-RC}.
\end{proof}




\section{Spectral independence of random cluster model}\label{sec:field}
In this section, we are going to prove \Cref{lem:SI-RC}. We prove \Cref{lem:SI-RC} via the following lemma. 
\begin{lemma} \label{lem:inf-RC}
  Let $\mu$ be the distribution of the random cluster model specified by graph $G = (V, E)$, parameters $\*p \in [0, 1]^E$ and $\*\lambda \in [0, 1)^V$.
  Then, $\mu$ is $2(1 - \lambda_{\max})^{-2}$-spectrally independent.
\end{lemma}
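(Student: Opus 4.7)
The plan is to fix an arbitrary edge $e = (u,v) \in E$ and to bound the $e$-th row sum of the influence matrix,
\[
  \sum_{f \in E \setminus \{e\}} \abs{\mu(f \mid e) - \mu(f \mid \overline{e})} \;\leq\; 2(1-\lambda_{\max})^{-2},
\]
by constructing a coupling $(X,Y)$ of $\mu(\cdot \mid e)$ and $\mu(\cdot \mid \overline{e})$ and bounding $\E{\abs{X \oplus Y}}$. The standard inequality $\abs{\mu(f\mid e)-\mu(f\mid\overline{e})} \le \Pr{f \in X \oplus Y}$ then yields the claim by summing over $f$.

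First I would isolate \emph{where} the two conditional distributions differ. Writing $E' = E\setminus\{e\}$ and conditioning on a common configuration $\sigma$ on $E'$, the density ratio between the two conditionals is
\[
  \frac{\mu(\sigma \mid e)}{\mu(\sigma \mid \overline{e})} \;\propto\; \frac{1+\*\lambda^{C_u(\sigma)\cup C_v(\sigma)}}{\bigl(1+\*\lambda^{C_u(\sigma)}\bigr)\bigl(1+\*\lambda^{C_v(\sigma)}\bigr)},
\]
where $C_u(\sigma),C_v(\sigma)$ are the components of $u,v$ in $(V,\sigma)$ (with the convention that the factor equals $1$ when $C_u = C_v$). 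This ratio lies in $[\lambda_{\max},1]$ and equals $1$ exactly when $u,v$ are already connected in $\sigma$. Hence the two conditionals coincide on configurations in which $u,v$ are $\sigma$-connected, and all disagreement is carried by the event that $u,v$ live in distinct components whose $\*\lambda$-weights are non-negligible.

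Next I would build the coupling by a BFS-style edge-by-edge revelation, rooted at $\{u,v\}$, using the optimal coupling of the one-edge marginals of $\mu(\cdot\mid e)$ and $\mu(\cdot\mid \overline{e})$ at each step, conditioned on the edges revealed so far. The plan is to maintain an \emph{active frontier} of edges whose status has not yet been revealed but which are adjacent in $X \oplus Y$ to edges already disagreeing, and to show that each newly revealed edge $f$ contributes disagreement probability bounded by a factor decaying in the graph distance $d(e,f)$. The damping per hop comes from the ratio displayed above: expanding $C_u,C_v$ along a new vertex $w$ can shift the ratio only by a multiplicative factor controlled by $\lambda_w \le \lambda_{\max}$, because $\*\lambda^{C}$ depends multiplicatively on vertices. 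Summing the resulting geometric series from each endpoint of $e$ should give two copies of $(1-\lambda_{\max})^{-1}$, yielding the $2(1-\lambda_{\max})^{-2}$ bound.

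The main obstacle is nonlocality: the density ratio depends on \emph{all} of $C_u(\sigma)$ and $C_v(\sigma)$, so adding a single far-away edge can in principle affect the coupling globally. To handle this, the hardest step will be to prove a percolation-style bound showing that, conditional on all disagreements so far lying in a frontier set $F$, the probability that disagreement propagates to a new edge $f$ adjacent to $F$ is bounded by $\lambda_{\max}$ times a local quantity summing telescopically. I expect this to follow by expressing the one-edge marginal ratio through a ratio of partition functions of the form $Z^{\mathrm{RC}}_{E',\*p,\*\lambda\cdot\mathbf{1}_{\Lambda}}/Z^{\mathrm{RC}}_{E',\*p,\*\lambda}$ on suitable pinned subgraphs, and using the monotonicity of $Z^{\mathrm{RC}}$ in $\*\lambda$ to pointwise compare them. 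The factor $(1-\lambda_{\max})^{-2}$ will then arise as $\sum_{k\ge 0}\lambda_{\max}^{k}\cdot\sum_{k\ge 0}\lambda_{\max}^{k}$ from the two endpoints of $e$.
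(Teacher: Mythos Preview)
Your high-level strategy---bound $\E{\abs{X\oplus Y}}$ for a coupling of $\mu(\cdot\mid e)$ and $\mu(\cdot\mid\overline e)$ and invoke coupling-implies-spectral-independence---is exactly what the paper does. The gap is in the coupling itself. A BFS-style edge revelation from $\{u,v\}$ \emph{branches}: at a frontier vertex of degree $d$ you expose $d-1$ fresh edges, so even if each one disagrees with probability at most $\lambda_{\max}$, the expected number of disagreements at depth $k$ behaves like $(\Delta\lambda_{\max})^k$, which does not sum when $\Delta>1/\lambda_{\max}$. Your final accounting ``$\sum_k\lambda_{\max}^k\cdot\sum_k\lambda_{\max}^k$ from the two endpoints'' tacitly treats the disagreement set as two \emph{paths}, not two trees; nothing in an edge-BFS forces that, and the ``hardest step'' you flag---controlling how one revealed edge shifts the marginal ratio---is exactly where the nonlocal component structure of the random cluster model blocks a degree-free bound. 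At best this route recovers an $O_{\Delta,\*\lambda}(1)$ spectral independence, which was already known.

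The paper sidesteps the branching by a detour through the subgraph-world model $\nu=\mu^{\mathrm{SW}}_{E,\*p/2,\*\eta}$ with $\eta_v=\tfrac{1-\lambda_v}{1+\lambda_v}$: a Bernoulli-sprinkling coupling (\Cref{lem:couple-mu-nu}) lifts any coupling-independence bound from $\nu$ back to $\mu$ with no loss (\Cref{lm:lifting-no-pin}). In $\nu$ the weight is \emph{local}---a product over edges times vertex parity penalties $\eta_v$---so conditioning on $e$ versus $\overline e$ just flips the parity constraint at $u$ and at $v$. The paper's coupling for a single parity flip (\Cref{alg:coupling}, \Cref{lem:cp-sg}) is a walk, not a BFS: it carries one active vertex, and upon arriving at a fresh vertex it either resolves the parity discrepancy outright (probability $\ge\tfrac{2\eta_{\min}}{1+\eta_{\min}}$, \emph{independent of degree}) or reveals one incident edge and, on disagreement, hands the active role to the other endpoint. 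Hence the set $U$ of visited vertices is stochastically dominated by a geometric variable, every disagreeing edge has both endpoints in $U$, and $\E{\abs{X\oplus Y}}\le\E{\binom{\abs{U}}{2}}\le\tfrac{1}{2\eta_{\min}^2}\le 2(1-\lambda_{\max})^{-2}$. The idea you are missing is this localisation of the discrepancy to a single moving vertex, which the parity (subgraph-world) picture makes possible but the component (random-cluster) picture does not.
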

\begin{proof}[Proof of \Cref{lem:SI-RC}]
  For any $\tau, \Lambda \subseteq E$, define $\tilde{\*p} \in [0,1]^E$ by
  \begin{align*}
    \forall e \in E, \quad \tilde{p}_e =
    \begin{cases}
      0 & \text{if }e \in \Lambda \setminus \tau,\\
      1 & \text{if }e \in \Lambda \cap \tau,\\
      p_e & \text{if }e \in E \setminus \Lambda.
    \end{cases}
  \end{align*}  
  Note that $\mu^{\mathrm{RC}}_{E,\tilde{\*p},\*\lambda}$ is exactly $\mu(\cdot \mid \+P_{\tau,\Lambda})$. This concludes the proof of \Cref{lem:SI-RC}.  
\end{proof}

In order to prove \Cref{lem:inf-RC}, we introduce a simple coupling criteria for spectral independence. 
\begin{definition}[coupling independence]
  A distribution $\mu$ over $2^{E}$ on ground set $E$ is $C$-\ci, if for all $i \in E$, there exists a coupling $(X,Y)$ of distribution $\mu(\cdot \mid i)$ and $\mu(\cdot \mid \overline{i})$, that
  \begin{align*}
    \E{\abs{X \oplus Y}} \le C.
  \end{align*}
  Furthermore, a distribution is $C$-\ci{} under all pinnings, if for any $\Lambda,\tau \subseteq U$ with $\mu(\+P_{\tau,\Lambda}) > 0$, $\mu(\cdot \mid \+P_{\tau,\Lambda})$ projected on $U \setminus \Lambda$ is $C$-\ci.
\end{definition}

\begin{proposition}\label{ci-implies-si}
  If a distribution $\mu$ over $2^E$ is $C$-\ci{}, then $\mu$ is $C$-spectrally independent.
\end{proposition}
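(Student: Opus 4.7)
The plan is to translate the coupling-independence hypothesis directly into a bound on a sum of entries of $\Psi^{\-{inf}}_\mu$, by interpreting each entry as a difference of marginal probabilities that is controlled by the expected symmetric difference under the CI coupling.

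First I would fix a coordinate $j \in E$ and assume $0 < \Pr[S \sim \mu]{j \in S} < 1$, since otherwise the corresponding entries of the influence matrix are identically zero by definition and contribute nothing. Using the $C$-\ci{} hypothesis at $j$, take a coupling $(X, Y)$ of $\mu(\cdot \mid j)$ and $\mu(\cdot \mid \ol j)$, both distributions on $2^{E \setminus \{j\}}$, with $\E{\abs{X \oplus Y}} \le C$.

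For each $i \in E \setminus \{j\}$, the marginal of $X$ at coordinate $i$ equals $\mu(i \mid j)$ and the marginal of $Y$ at $i$ equals $\mu(i \mid \ol j)$, so the standard coupling inequality gives
\begin{align*}
  \abs{\mu(i \mid j) - \mu(i \mid \ol j)} = \abs{\E{\one{i \in X} - \one{i \in Y}}} \le \Pr{i \in X \oplus Y}.
\end{align*}
Summing over $i$ and using linearity of expectation,
\begin{align*}
  \sum_{i \in E} \abs{\Psi^{\-{inf}}_\mu(i,j)} = \sum_{i \neq j} \abs{\mu(i \mid j) - \mu(i \mid \ol j)} \le \sum_{i \neq j} \Pr{i \in X \oplus Y} = \E{\abs{X \oplus Y}} \le C.
\end{align*}
Maximizing over $j$ yields $\norm{\Psi^{\-{inf}}_\mu}_\infty \le C$, which is precisely the definition of $C$-spectral independence.

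There is essentially no substantive obstacle: the definition of $C$-\ci{} is engineered precisely to make this one-line transfer work. The only minor bookkeeping is handling the degenerate cases $\Pr[S \sim \mu]{j \in S} \in \{0, 1\}$ (where the relevant entries of $\Psi^{\-{inf}}_\mu$ vanish by definition, and the marginal on $j$ is deterministic so any trivial coupling suffices) and observing that since the coupled pair lives on $2^{E \setminus \{j\}}$, the event $\{i \in X \oplus Y\}$ is unambiguously defined for every $i \neq j$.
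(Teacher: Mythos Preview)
Your proposal is correct and follows essentially the same argument as the paper: fix the conditioning coordinate, use the coupling from the coupling-independence hypothesis, bound each marginal difference by the probability of disagreement at that coordinate, and sum to recover $\E{\abs{X\oplus Y}}\le C$. The only cosmetic difference is that the paper fixes the index named $i$ while you fix the one named $j$, but the logic is identical.
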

\begin{proof}
  Fix $i \in E$. Let $(X,Y)$ be a coupling of $\mu(\cdot \mid i)$ and $\mu(\cdot \mid \overline{i})$ such that $\E{\abs{X \oplus Y}} \le C$, then
\begin{align*}
  \sum_{j \in E \setminus \{i\} } \abs{\mu(j \mid i) - \mu(j \mid \overline{i})}  
  \le \sum_{j \in E} \E{\*1[X_j \neq Y_j]}
  = \E{\abs{X \oplus Y}} \le C,
\end{align*}
where the first inequality holds by standard coupling lemma.
Therefore,
\begin{align*}
  \norm{\Psi^{\mathrm{inf}}_{\mu}}_{\infty} &= \max_{i \in E} \sum_{j \in E \setminus \{i\} } \abs{\mu(j \mid i) - \mu(j \mid \overline{i})}  \le C. \qedhere
\end{align*}
\end{proof}

Now, to prove \Cref{lem:inf-RC}, we first claim the coupling independence for the distribution of subgraph-world model.
\begin{lemma}\label{lm:sg-coupling}
  Let $\nu$ be the distribution of subgraph-world model specified by graph $G = (V, E)$, and vectors $\*p \in [0, \frac{1}{2}]^{E}$, $\*\eta \in (0, 1]^V$.
  It holds that $\nu$ is $\frac{1}{2\eta_{\min}^2}$-coupling independent.
\end{lemma}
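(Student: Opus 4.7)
The plan is to construct an explicit coupling $(X, Y)$ of the projected conditional distributions $\nu(\cdot\mid i)$ and $\nu(\cdot\mid \bar i)$ on $2^{E\setminus\{i\}}$, and to bound $\E{\abs{X \oplus Y}}$ by $\tfrac{1}{2\eta_{\min}^2}$ via a worm/source-swapping argument. Writing $i = \{u_1, u_2\}$ and projecting out the coordinate $i$, a direct calculation shows that $\nu(\cdot \mid i)$ and $\nu(\cdot \mid \bar i)$ are subgraph-world-type measures on $G - i$ that agree everywhere except that the parity convention for the $\eta$-factor is \emph{swapped} at $u_1$ and $u_2$: under $\nu(\cdot \mid i)$, the factor $\eta_{u_j}$ penalizes even-degree configurations at $u_j$, while under $\nu(\cdot \mid \bar i)$ it penalizes odd-degree configurations. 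Hence, in any coupling $(X,Y)$ the symmetric difference $T = X \oplus Y$ must have odd degree at exactly $\{u_1, u_2\}$; that is, $T$ is a $\{u_1, u_2\}$-join (a path from $u_1$ to $u_2$ together with possibly extra disjoint closed subgraphs).

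The heart of the argument is a \emph{worm representation} for $T$. I would decompose the joint weight of $(X, Y)$ by writing $X = C \sqcup A$ and $Y = C \sqcup B$ with $C = X \cap Y$ and $A \sqcup B = T$, marginalize the common part $C$ against the residual subgraph-world measure on $E \setminus (T \cup \{i\})$, and average uniformly over splits $T = A \sqcup B$ consistent with the parity swap at $u_1, u_2$. The resulting closed-form worm weight has the shape
\[
W(T) \;\propto\; \prod_{e \in T} \frac{p_e}{1-p_e} \cdot Z^{\mathrm{res}}_{E \setminus (T \cup \{i\})}(T),
\]
where $Z^{\mathrm{res}}$ is a subgraph-world partition function on the residual graph with parity defects absorbed from $T$. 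The hypothesis $\*p \in [0, \tfrac{1}{2}]^E$ ensures that intermediate factors such as $(1 - 2p_e)$ remain nonnegative, so that the normalized worm distribution is well-defined. Verifying that the construction recovers the two correct marginals is the main combinatorial step, and plays the role of a weighted switching-lemma identity from the random-current literature.

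To bound $\mathbb{E}_W[\abs{T}]$, I would compare worms of length $\ell$ and $\ell+1$ that differ by a single-edge extension at an endpoint. Extending $T$ by one edge flips the parity at two vertices (the old endpoint, whose degree now becomes even, and the new endpoint, whose degree becomes odd), so the ratio $W(T \cup \{e'\})/W(T)$ absorbs two extra $\eta$-factors, each at most $\eta_{\min}$. This yields a geometric tail $\Pr{\abs{T} \geq \ell} = O(\eta_{\min}^{2(\ell-1)})$, and summing the series gives the claimed bound $\E{\abs{T}} \leq \tfrac{1}{2\eta_{\min}^2}$. The principal obstacle is the combinatorial identity underlying the worm representation — writing $W(T)$ explicitly and confirming its two marginals — after which the random-walk estimate leading to $\tfrac{1}{2\eta_{\min}^2}$ is largely mechanical.
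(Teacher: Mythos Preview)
Your proposal contains a genuine structural error. You assert that ``in any coupling $(X,Y)$ the symmetric difference $T = X \oplus Y$ must have odd degree at exactly $\{u_1, u_2\}$.'' This would be true in a \emph{hard}-constraint model ($\*\eta\equiv 0$), but here $\*\eta\in(0,1]^V$ imposes only soft penalties: a sample from $\nu(\cdot\mid i)$ can have any parity at $u_1,u_2$, and so can a sample from $\nu(\cdot\mid\bar i)$, hence $X\oplus Y$ is under no parity constraint whatsoever. (Concretely: if $G$ is the single edge $i$, then $E\setminus\{i\}=\emptyset$ and $T=\emptyset$ has even degree at $u_1,u_2$.) The worm/random-current picture you invoke is native to the hard-constraint setting and does not transfer without a mechanism to \emph{dispose of} the soft $\eta$-factors. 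Your tail estimate is also internally inconsistent: a decay rate $\eta_{\min}^{2}$ per edge would give $\E{\abs{T}}\le(1-\eta_{\min}^2)^{-1}$, which stays bounded as $\eta_{\min}\to 0$, whereas the target $\tfrac{1}{2\eta_{\min}^2}$ diverges --- as it must, since small $\eta_{\min}$ makes parity defects expensive and forces the disagreement to travel far. The relevant contraction rate is of order $\tfrac{1-\eta_{\min}}{1+\eta_{\min}}$, not $\eta_{\min}^2$.

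The paper addresses exactly this soft-constraint difficulty. It first reduces (via an intermediate measure) to coupling $\mu^{\mathrm{GSW}}_{E',\*p,\*\eta,\sigma}$ with $\mu^{\mathrm{GSW}}_{E',\*p,\*\eta,\sigma\oplus\*1_u}$, a \emph{single} source flip, and builds this coupling recursively: upon reaching a fresh vertex $u$ it draws a uniform $r\in(0,1)$ which, with probability at least $1-\tfrac{1-\eta_u}{1+\eta_u}$, lands in a region where the two conditional laws can be coupled identically and the procedure terminates; on failure it sets $\eta_u\gets 0$ (so the constraint at $u$ is now hard) and reveals one incident edge, propagating the disagreement to a neighbor. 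This yields $\Pr{\abs{U}\ge k}\le\bigl(\tfrac{1-\eta_{\min}}{1+\eta_{\min}}\bigr)^{k-1}$ for the set $U$ of visited vertices, and since every edge of $X\oplus Y$ has both endpoints in $U$, one gets $\E{\abs{X\oplus Y}}\le\sum_k\binom{k}{2}\Pr{\abs{U}=k}\le\tfrac{1}{4\eta_{\min}^2}$. Chaining two such single-flip couplings (one per endpoint of $i$) gives the stated $\tfrac{1}{2\eta_{\min}^2}$.
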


Then, we show that once we have a coupling of distributions of generalized subgraph-world model, we could ``lift'' it to the random cluster model.

\begin{lemma}\label{lm:lifting-no-pin}
  Let $G=(V,E)$ be an undirected graph, $\*p \in [0,1]^E$ and $\*\lambda \in [0,1)^V$ be parameters.
  Let $\mu$ be the distribution of a random cluster model specified by graph $G$, parameters $\*p$ and $\*\lambda$.
  Let $\nu$ be the distribution of a subgraph-world model specified by graph $G$, parameters $\frac{\*p}{2} = \tp{\frac{p_e}{2}}_{e \in E}$ and $\*\eta = \tp{\frac{1-\lambda_v}{1+\lambda_v}}_{v \in V}$.
  
  If $\nu$ is $C$-\ci{}, then $\mu$ is also $C$-\ci{}.
\end{lemma}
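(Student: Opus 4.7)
The plan is to produce, for each $e \in E$, a coupling $(X_1, X_2)$ of $\mu(\cdot \mid e)$ and $\mu(\cdot \mid \bar{e})$ with $\E{\abs{X_1 \oplus X_2}} \le C$, by lifting the given SW coupling $(Y_1, Y_2)$. The strategy is to construct a Markov kernel $K : 2^E \to 2^E$ that sends $\nu$-samples to $\mu$-samples while respecting per-edge conditioning, and then to apply $K$ to both $Y_1$ and $Y_2$ using shared randomness, so that the symmetric difference on the $X$-side is controlled by that on the $Y$-side.

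A natural candidate for $K$ comes from a random current representation: sample nonnegative integer edge multiplicities $n : E \to \mathbb{Z}_{\ge 0}$ (with ghost edges to an auxiliary vertex encoding the external field $\*\lambda$) and set $Y := \{e : n(e) \text{ odd}\}$ and $X := \{e : n(e) > 0\}$. This yields a joint distribution $\pi$ on $2^E \times 2^E$ with $X$-marginal $\mu$, $Y$-marginal $\nu$, and $Y \subseteq X$ pointwise. Taking $K(\cdot \mid Y) := \pi(X \in \cdot \mid Y)$ then amounts to adding to $Y$ a collection of independently-sampled ``extra'' edges; coupling the randomness for these extra edges between the two executions should yield $\abs{X_1 \oplus X_2} \le \abs{Y_1 \oplus Y_2}$ pointwise, hence the desired bound in expectation.

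The main obstacle is aligning the conditionings. Since $Y \subseteq X$ under $\pi$, the events $\{e \in Y\}$ and $\{e \in X\}$ are distinct (the former is strictly stronger), so a naive application of $K$ does not transport $\nu(\cdot \mid e)$ to $\mu(\cdot \mid e)$. Overcoming this calls for either a redesign of the joint coupling so that the two conditioning events coincide, or an auxiliary adjustment exploiting the pinned form of \Cref{prop:partition-func}: conditioning the RC on $\{e \in X\}$ corresponds to contracting $e$, which on the SW side corresponds to flipping the parity preference at the endpoints of $e$---precisely the effect of conditioning the SW on $\{e \in Y\}$. Making this structural correspondence operational without inflating the coupling constant past $C$ is the delicate technical step on which the whole proof rests.
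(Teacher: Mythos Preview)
Your general framework is exactly what the paper uses: there is a coupling of $\nu$ and $\mu$ in which a $\mu$-sample is obtained from a $\nu$-sample by adding each edge $f$ independently with probability $q_f = p_f/(2-p_f)$ (this is \Cref{lem:couple-mu-nu}, which your random-current picture also recovers). You have also correctly identified the only real difficulty: pushing $\nu(\cdot\mid e)$ forward through this kernel does not give $\mu(\cdot\mid e)$, because under the coupling the events ``$e$ is in the SW sample'' and ``$e$ is in the RC sample'' differ.

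However, your proposal stops at naming the obstacle; it does not resolve it, and the route you sketch (contraction and parity-flipping) is not what the paper does and does not obviously close the gap. The paper's fix is short and direct. In the coupling $Y=X\cup Z$ with $X\sim\nu$ and $Z\sim\bigotimes_f\mathrm{Ber}(q_f)$ independent, the event $\{e\in Y\}$ decomposes as $\{e\in X\}\cup\{e\notin X,\,e\in Z\}$, so $\mu(\cdot\mid e)$ is a \emph{mixture}: with weight $t_e:=\frac{q_e\,\nu(\overline{e})}{q_e\,\nu(\overline{e})+\nu(e)}$ it is ``draw $X_0\sim\nu(\cdot\mid\overline{e})$, add the Bernoullis, force $e$ in'', and with weight $1-t_e$ it is ``draw $X_1\sim\nu(\cdot\mid e)$, add the Bernoullis''. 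The coupling of $\mu(\cdot\mid\overline{e})$ with $\mu(\cdot\mid e)$ is then: sample the given SW coupling $(X_0,X_1)$ and a shared $Z$; set $Y_0=(X_0\cup Z)\setminus\{e\}$ always; and set $Y_1=(X_0\cup Z)\cup\{e\}$ with probability $t_e$ or $Y_1=X_1\cup Z$ otherwise. On the first branch $\abs{Y_0\oplus Y_1}=1$; on the second $\abs{Y_0\oplus Y_1}\le\abs{X_0\oplus X_1}$ because the shared $Z$ can only erase discrepancies. Since $C\ge 1$ by definition of coupling independence, $\E{\abs{Y_0\oplus Y_1}}\le t_e+(1-t_e)C\le C$. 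The missing idea in your proposal is precisely this mixture decomposition of $\mu(\cdot\mid e)$.
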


\Cref{lem:inf-RC} is proved by combining \Cref{ci-implies-si}, \Cref{lm:sg-coupling}, and \Cref{lm:lifting-no-pin}.

The proof of \Cref{lm:sg-coupling} and \Cref{lm:lifting-no-pin} are given in \Cref{sec:sg-coupling} and \Cref{sec:lift} respectively.

\subsection{Coupling independence of generalized subgraph-world model} \label{sec:sg-coupling}
In this section, we prove \Cref{lm:sg-coupling}.
For convenience, for $\sigma \in \mathbb{R}^V$, we use $\as{\sigma}{u}{c}$ to denote a vector $\sigma$ with $\sigma_u$ being changed to value $c$. 
Meanwhile, for $\sigma, \tau \in \{0, 1\}^V$, we use $\sigma \oplus \tau$ to denote the bitwise exclusive or of $\sigma$ and $\tau$.
We now prove a generalized version of \Cref{lm:sg-coupling}.
\begin{lemma} \label{lem:cp-sg}
  Let $G=(V,E)$ be an undirected graph, $\sigma \in \{0,1\}^V$ be parity constraints on vertices, $\*p \in \left[0,\frac{1}{2}\right]^E$ and $\*\eta \in (0,1]^V$ be parameters.
  For any $u \in V$, there is a coupling $(X, Y)$ between $\mu_{E, \*p, \*\eta, \sigma}^{\-{GSW}}$ and $\mu_{E, \*p, \*\eta, \sigma \oplus \*1_u}^{\-{GSW}}$ such that $\E{\abs{X \oplus Y}} \leq \frac{1}{4\eta_{\min}^2}$.
\end{lemma}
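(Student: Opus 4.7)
The plan is to construct a coupling in two phases based on the parity of $|S \cap E_u|$, which is the unique feature that distinguishes $\mu_1 := \mu^{\mathrm{GSW}}_{E,\*p,\*\eta,\sigma}$ from $\mu_2 := \mu^{\mathrm{GSW}}_{E,\*p,\*\eta,\sigma\oplus \*1_u}$.

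\textbf{Phase 1 (partition coupling).} Partition $2^E$ into
$\Omega^{(0)} = \{S \subseteq E : |S \cap E_u| \equiv \sigma_u \pmod 2\}$ and $\Omega^{(1)} = 2^E \setminus \Omega^{(0)}$.
The key observation is that the $\eta_u$-factor in the GSW weight is constant on each $\Omega^{(i)}$ (equal to $\eta_u$ on one and $1$ on the other), so
\begin{align*}
  \mu_1(\cdot \mid \Omega^{(i)}) = \mu_2(\cdot \mid \Omega^{(i)}) \qquad \text{for } i\in\{0,1\}.
\end{align*}
We couple $(X,Y)$ by first coupling the indicator variables $\mathbf 1[X\in\Omega^{(0)}]$ and $\mathbf 1[Y\in\Omega^{(0)}]$ optimally (using that their marginals $a := \mu_1(\Omega^{(0)})$ and $b:=\mu_2(\Omega^{(0)})$ are Bernoulli parameters), and then, in each of the two agreement events, setting $X=Y$ drawn from the common conditional distribution.

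\textbf{Bounding the disagreement probability.} Let $W_i := \sum_{S\in\Omega^{(i)}} w'(S)$ be the total weight on $\Omega^{(i)}$ stripped of the $\eta_u$-factor. A short calculation gives
\begin{align*}
  a = \frac{\eta_u W_0}{\eta_u W_0 + W_1}, \qquad b = \frac{W_0}{W_0 + \eta_u W_1},
\end{align*}
(with the WLOG assumption that $\sigma_u$ is the parity carrying the $\eta_u$ factor in $\mu_1$), so
\begin{align*}
  |a - b| = \frac{(1-\eta_u^2)\, W_0 W_1}{(\eta_u W_0 + W_1)(W_0 + \eta_u W_1)} \leq \frac{1-\eta_u^2}{4\eta_u},
\end{align*}
where the last step uses AM--GM twice on the two factors of the denominator.

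\textbf{Phase 2 (coupling in the disagreement event).} In the disagreement event (probability $|a-b|$), we have $X$ and $Y$ in different partition classes $\Omega^{(0)}$ and $\Omega^{(1)}$. Since a single edge $e\in E_u$ flips the parity of $|\cdot \cap E_u|$, the two classes are linked by the $|E_u|$-regular bipartite ``single-edge-flip'' graph $H$ on $\Omega^{(0)}\cup\Omega^{(1)}$. The plan is to produce a joint distribution on pairs $(X,Y)$ with the correct conditional marginals $\mu_1(\cdot\mid\Omega^{(1)})$ and $\mu_1(\cdot\mid\Omega^{(0)})$, supported (as much as possible) on edges of $H$, so that $|X\oplus Y|$ equals $1$ whenever the coupling sits on an edge of $H$. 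Existence of such a coupling on single-edge-flips is a Hall/LP condition on $H$; when it fails, a small correction using odd-size subsets $T \subseteq E_u$ is used so that $Y = X \oplus T$ still lies in $\Omega^{(0)}$, and the extra cost is absorbed into the already-tight ratio between $1-\eta_u^2$ and $\eta_u$.

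\textbf{Combining.} Putting the two phases together, writing $D$ for the disagreement event,
\begin{align*}
  \E{|X \oplus Y|} = \Pr{D} \cdot \E{|X\oplus Y| \mid D} \leq \frac{1-\eta_u^2}{4\eta_u} \cdot 1 \leq \frac{1}{4\eta_{\min}^2},
\end{align*}
where the last inequality holds because $(1-\eta_u^2)\eta_{\min}^2 \leq \eta_u$ (using $\eta_{\min} \leq \eta_u \leq 1$).

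\textbf{Main obstacle.} The delicate point is Phase 2: in pathological regimes of $\*p,\*\eta$ one can verify that a pure single-edge-flip coupling fails a Hall-type inequality on the bipartite graph $H$. The technical heart of the argument is therefore to exhibit a concrete coupling in the disagreement event whose expected symmetric difference is $1$ (or only slightly larger), e.g.\ by mixing single-edge flips with a controlled amount of odd-cardinality multi-edge flips chosen so the push-forward of $\mu_1(\cdot\mid\Omega^{(1)})$ matches $\mu_1(\cdot\mid\Omega^{(0)})$ exactly. This constructive step, rather than the partition bookkeeping, is where all the work lies.
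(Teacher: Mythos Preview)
Your Phase 1 is correct and in fact coincides with the first branching step of the paper's argument: conditioning on the parity of $\abs{S\cap E_u}$, the two measures agree, and the disagreement probability is exactly $q_1-q_0$ in the paper's notation, which you bound by $\frac{1-\eta_u^2}{4\eta_u}$ (the paper uses the cruder $\frac{1-\eta_u}{1+\eta_u}$). The genuine gap is Phase 2, and it is not a technicality that can be patched with ``odd-cardinality multi-edge flips in $E_u$''.

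Concretely, in the disagreement event you must couple $\mu_1(\cdot\mid\Omega^{(0)})$ with $\mu_1(\cdot\mid\Omega^{(1)})$; by your own observation (and the paper's \Cref{prop:cond-dist}) these are precisely $\mu^{\mathrm{GSW}}_{E,\*p,\as{\*\eta}{u}{0},\as{\sigma}{u}{1}}$ and $\mu^{\mathrm{GSW}}_{E,\*p,\as{\*\eta}{u}{0},\as{\sigma}{u}{0}}$, i.e.\ the \emph{same} pair of distributions as in the lemma but with a hard parity constraint $\eta_u=0$. This is harder, not easier. Any flip of a subset $T\subseteq E_u$ changes the parity at every neighbour $v$ with $\abs{T\cap E_v}$ odd, so you cannot keep the rest of the configuration fixed: the discrepancy migrates to the neighbours. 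On the path $u\!-\!v\!-\!w$ with $p_{uv}=p_{vw}=\tfrac12$ and $\eta_v=\eta_w=\eta$, the only edge in $E_u$ is $uv$, the $vw$-marginals under the two conditionals are $\frac{\eta^2}{1+\eta^2}$ versus $\tfrac12$, and the best coupling already has $\E{\abs{X\oplus Y}\mid D}\geq 1+\frac{1-\eta^2}{2(1+\eta^2)}\to \tfrac32$ as $\eta\to 0$. Longer paths make this worse linearly in the length, so no coupling supported on flips inside $E_u$ can give a constant bound, and your product $\Pr{D}\cdot 1$ is simply incorrect.

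What the paper does instead is exactly the recursion your Phase 2 is missing: after entering the disagreement branch at $u$, it reveals one edge $e=(u,v)\in E_u$, optimally couples its marginal, and if the edge values disagree the parity discrepancy is \emph{transported} to $v$; the procedure then reruns Phase~1 at $v$ (where $\eta_v>0$ gives a fresh chance to terminate), and so on. The bound $\frac{1}{4\eta_{\min}^2}$ comes not from ``disagreement probability times $1$'' but from summing $\binom{\abs{U}}{2}$ against the geometric tail $\Pr{\abs{U}\ge k}\le\bigl(\tfrac{1-\eta_{\min}}{1+\eta_{\min}}\bigr)^{k-1}$ of the visited-vertex set $U$. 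Your proposal captures the first iteration of this recursion but does not contain the propagation idea that makes it terminate.
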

We now prove \Cref{lm:sg-coupling}. The proof follows from a standard coupling argument.
\begin{proof}[Proof of \Cref{lm:sg-coupling}]
  Fix $e \in (u, v) \in E$ and let $\nu = \mu^{\mathrm{GSW}}_{E,\*p,\*\eta,\sigma}$. By definition,
  \begin{align*}
    \nu(\cdot \mid \overline{e}) = \mu^{\-{GSW}}_{E\setminus\{e\}, \*p, \eta, \*1} \quad \text{and} \quad \nu(\cdot \mid e) = \mu^{\-{GSW}}_{E\setminus \{e\}, \*p, \eta, \*1 \oplus \*1_u \oplus \*1_v},
  \end{align*}
  Consider an intermediate distribution
    $\widetilde{\nu} := \mu^{\-{GSW}}_{E\setminus \{e\}, \*p, \eta, \*1 \oplus \*1_u}$.
  By \Cref{lem:cp-sg}, there are couplings $\+C_1$ of $\nu(\cdot \mid \overline{e})$ and $\widetilde{\nu}$ as well as $\+C_2$ of $\widetilde{\nu}$ and $\nu(\cdot \mid e)$ satisfying
  \begin{align*}
    \forall i \in \{1,2\}, \quad \E[(X,Y) \sim C_i]{\abs{X \oplus Y}} \le \frac{1}{4\eta_{\min}^2}.
  \end{align*}
  Using $\+C_1$ and $\+C_2$, we could construct a coupling $(X, Y)$ of $\nu(\cdot \mid \overline{e})$ and $\nu(\cdot \mid e)$ by: (1) sampling $X \sim \nu(\cdot \mid \overline{e})$; (2) sampling $Z$ proportional to $\+C_1(X, \cdot)$; (3) sampling $Y$ proportional to $\+C_2(Z, \cdot)$.
  It could be verified from the definition of $\+C_1$ and $\+C_2$ that $X, Y$ have correct marginals.
  Again, by \Cref{lem:cp-sg}, it holds that
  \begin{align*}
    \E{\abs{X \oplus Y}}
    &\leq \E{\abs{X\oplus Z} + \abs{Z \oplus Y}} \leq \E{\abs{X\oplus Z}} + \E{\abs{Z \oplus Y}} \leq \frac{1}{2\eta_{\min}^2},
  \end{align*}
  where the last inequality follows from the fact that $(X, Z) \sim \+C_1$ and $(Z, Y) \sim \+C_2$.
\end{proof}

The rest part of this section is dedicated to the proof of \Cref{lem:cp-sg}.
We construct the coupling $(X, Y)$ using the procedure Couple$(G,\*p,\*\eta,\sigma,u,U)$ in \Cref{alg:coupling}, where $U$ denotes the set of visited vertices, and is initialized to $\emptyset$. \Cref{fig:coupling} is an illustration of \Cref{alg:coupling}.

\begin{figure}
  \begin{tikzpicture}[gnode/.style={circle, draw=green!60, fill=green!30, very thick, minimum size=5mm},
    rnode/.style={circle, draw=red!60, fill=red!30, very thick, minimum size=5mm},
    bnode/.style={circle, draw=blue!60, fill=blue!30, very thick, minimum size=5mm},
    rect/.style={rectangle, draw=cyan!40, fill=cyan!5, ultra thick, rounded corners, minimum width=40mm, minimum height=40mm}
    ] 
    \node (rect1) at (2,2.25) [draw=cyan!40, fill=cyan!5, ultra thick, minimum width=5cm,minimum height=2.5cm, rounded corners] {};
    \node[anchor=south] at (rect1.south) {$U=X=Y=\emptyset$};
    \node[rnode] (1) at (0,2){$v_1$}; 
    \node[gnode] (2) at (2,2){$v_2$}; 
    \node[bnode] (3) at (4,2){$v_3$}; 
    \begin{scope}[>={Stealth[black]},
      every node/.style={fill=cyan!5,circle},
      every edge/.style={draw=black,very thick}]
    \path[dotted, very thick] (1) edge node {$e_1$} (2);
    \path[dotted, very thick] (2) edge node {$e_2$} (3);
    \path[dotted, very thick] (3) edge[bend right=40] node {$e_3$} (1);
    \end{scope}
  
    \node (rect2) at (-1.5,-0.75) [draw=green!40, fill=green!5, ultra thick, minimum width=5cm,minimum height=2.5cm, rounded corners] {};
    \node[anchor=south] at (rect2.south) {$U=\{v_1\}$, $X=Y=\{e_2\}$};
    \node[bnode] (1) at (-3.5,-1){$v_1$}; 
    \node[gnode] (2) at (-1.5,-1){$v_2$}; 
    \node[bnode] (3) at (0.5,-1){$v_3$}; 
    \begin{scope}[>={Stealth[black]},
      every node/.style={fill=green!5,circle},
      every edge/.style={draw=black,very thick}]
    \path[very thick] (2) edge node {$e_2$} (3);
    \end{scope}
  
    \node (rect3) at (5.5,-0.75) [draw=red!40, fill=red!5, ultra thick, minimum width=5cm,minimum height=2.5cm, rounded corners] {};
    \node[anchor=south] at (rect3.south) {$U=\{v_1\}$, $X=Y=\emptyset$};
    \node[rnode] (1) at (3.5,-1){$v_1$}; 
    \node[gnode] (2) at (5.5,-1){$v_2$}; 
    \node[bnode] (3) at (7.5,-1){$v_3$}; 
    \begin{scope}[>={Stealth[black]},
      every node/.style={fill=red!5,circle},
      every edge/.style={draw=black,very thick}]
      \path[dotted, very thick] (1) edge node {$e_1$} (2);
      \path[dotted, very thick] (2) edge node {$e_2$} (3);
      \path[dotted, very thick] (3) edge[bend right=40] node {$e_3$} (1);
    \end{scope}
  
    \node (rect4) at (2,-3.75) [draw=cyan!40, fill=cyan!5, ultra thick, minimum width=5cm,minimum height=2.5cm, rounded corners] {};
    \node[anchor=south] at (rect4.south) {$U=\{v_1\}$, $X=Y=\{e_1\}$};
    \node[rnode] (1) at (0,-4){$v_1$}; 
    \node[bnode] (2) at (2,-4){$v_2$}; 
    \node[bnode] (3) at (4,-4){$v_3$}; 
    \begin{scope}[>={Stealth[black]},
      every node/.style={fill=cyan!5,circle},
      every edge/.style={draw=black,very thick}]
      \path[very thick] (1) edge node {$e_1$} (2);
      \path[dotted, very thick] (2) edge node {$e_2$} (3);
      \path[dotted, very thick] (3) edge[bend right=40] node {$e_3$} (1);
    \end{scope}
    \node (rect5) at (9,-3.75) [draw=red!40, fill=red!5, ultra thick, minimum width=5cm,minimum height=2.5cm, rounded corners] {};
    \node[anchor=south] at (rect5.south) {$U=\{v_1\}$, $X=\emptyset, Y=\{e_1\}$};
    \node[gnode] (1) at (7,-4){$v_1$}; 
    \node[rnode] (2) at (9,-4){$v_2$}; 
    \node[bnode] (3) at (11,-4){$v_3$}; 
    \begin{scope}[>={Stealth[black]},
      every node/.style={fill=red!5,circle},
      every edge/.style={draw=red,very thick}]
      \path[very thick] (1) edge node {$e_1$} (2);
    \end{scope}
    \begin{scope}[>={Stealth[black]},
      every node/.style={fill=red!5,circle},
      every edge/.style={draw=black,very thick}]
      \path[dotted, very thick] (2) edge node {$e_2$} (3);
      \path[dotted, very thick] (3) edge[bend right=40] node {$e_3$} (1);
    \end{scope}
    \path [->] (rect1) edge node[left,xshift=0pt,yshift=1pt]{\tiny $r \le q_0$ or $r \ge q_1$} (rect2);
    \path [->] (rect1) edge node[right,xshift=0pt,yshift=1pt]{\tiny $q_0< r <q_1$} (rect3);
    \path [->] (rect3) edge node[left,xshift=0pt,yshift=1pt]{\tiny $X_1 = Y_1$} (rect4);
    \path [->] (rect3) edge node[right,xshift=0pt,yshift=1pt]{\tiny $X_1 \neq Y_1$} (rect5);
    \end{tikzpicture} 
    \caption{
      This is an illustration of \Cref{alg:coupling}. 
      Here, vertices $v_i$ is colored red if $u = v_i$, blue if $\sigma_{v_i} = 1$ and green if $\sigma_{v_i} = 0$.
      Moreover, edge that has not been revealed is represented as dotted line, edge in both $X$ and $Y$ is colored with black, edge in exactly one of $X$ and $Y$ is colored with red, and edge that has been revealed but not in either $X$ or $Y$ is removed.
      }\label{fig:coupling}
  \end{figure}
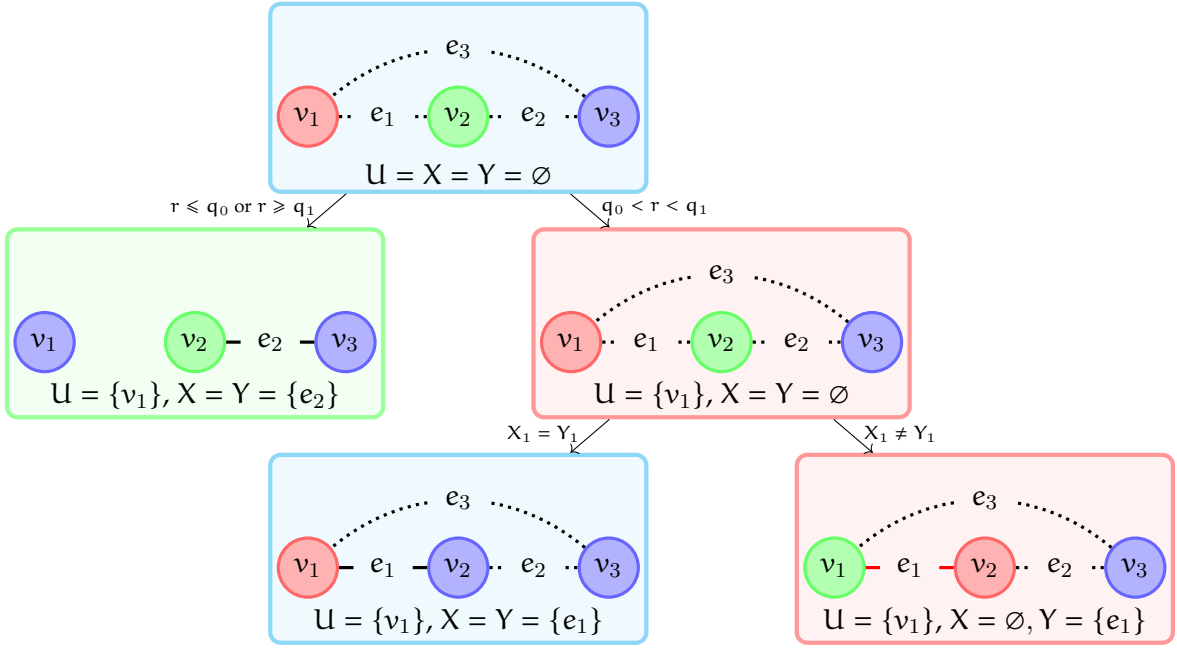
In order to prove \Cref{lem:cp-sg}, it suffices to prove the following properties.
\begin{enumerate}
\item\label{req:termination} 
  Each recursive call in \Cref{alg:coupling} is valid.
\item\label{req:dist-req} $\E{\abs{X \oplus Y}} \le \frac{1}{4\eta_{\min}^2}$;
\item\label{req:coupling-req} $(X,Y) = \text{Couple}(u,E,\*\eta,\sigma,\emptyset)$ is indeed a coupling of $\mu^{\mathrm{GSW}}_{E,\*p,\*\eta,\sigma}$ and $\mu^{\mathrm{GSW}}_{E,\*p, \*\eta, \sigma \oplus \*1_u}$, that is 
  \begin{align*}
    X \sim \mu^{\mathrm{GSW}}_{E,\*p,\*\eta,\sigma} \text{ and } Y \sim \mu^{\mathrm{GSW}}_{E,\*p, \*\eta, \sigma \oplus \*1_u}.
  \end{align*}
\end{enumerate}

\begin{algorithm}
  \caption{Couple$(G,\*p,\*\eta,\*\sigma,u,U)$}\label{alg:coupling}
  \SetKwInOut{Input}{input}
  \SetKwInOut{Output}{output}

  \label{alg:input}\Input{graph $G=(V,E)$, parameters $\*p \in [0,1]^E, \*\eta \in [0,1]^V, \sigma \in \{0,1\}^V$, vertex $u \in V$ and set of visited vertices $U$ satisfying\\
  \begin{enumerate}
    \item $\eta_u = 0$ if and only if $u \in U$;
    \item $Z^{\mathrm{GSW}}_{E,\*p,\*\eta,\sigma},Z^{\mathrm{GSW}}_{E,\*p,\*\eta,\sigma \oplus \*1_u}>0$.
  \end{enumerate}}
  \Output{a pair of random configuration $(X,Y) \in 2^E \times 2^E$.}

  \If{$u \not\in U$}{\label{line:outer-if}
    update $U \leftarrow U \cup \{u\}$;\\
    let $\+A=\{S \subseteq E \mid \abs{S \cap E_u} \equiv 0 \mod 2\}$; \\
    let $R = \tp{\sum_{S \in \+A} \mu^{\mathrm{GSW}}_{E,\*p,\as{\*\eta}{u}{1},\sigma}(S)} \tp{\sum_{S \not\in \+A} \mu^{\mathrm{GSW}}_{E,\*p,\as{\*\eta}{u}{1},\sigma}(S) }^{-1}$, 
    $q_0 = \frac{\eta_u R}{\eta_u R+1}$ and $q_1=\frac{R}{R+\eta_u}$;\\ \label{line:def}
    draw $r \sim \mathrm{Uniform}(0,1)$;\\ \label{line:sample-r}
    update $\eta_u \leftarrow 0$;\\
    \If{$r \ge q_1$}{
      sample $C \sim \mu^{\mathrm{GSW}}_{E,\*p,\*\eta,\as{\sigma}{u}{0}}$;\\
      \Return{$(X,Y) = (C,C)$}
    }
    \If{$r \le q_0$}{
      sample $C \sim \mu^{\mathrm{GSW}}_{E,\*p,\*\eta,\as{\sigma}{u}{1}}$;\\
      \Return{$(X,Y) = (C,C)$}\label{line:after-return}
    }
  }
  pick an arbitrary $e=(u,v) \in E_u$; \\
  let $\nu$, $\pi$ be the distributions of $\mu^{\mathrm{GSW}}_{E,\*p,\*\eta,\sigma}$ and $\mu^{\mathrm{GSW}}_{E,\*p,\*\eta,\sigma \oplus \*1_u}$ projected on $e$ respectively;\\
  sample $(X_1,Y_1)$ from an optimal coupling of $\nu$ and $\pi$;\\
  \If{$X_1 = \{e\}$}{
    update $\sigma \leftarrow \sigma \oplus \*1_u \oplus \*1_v$;\\
  }
  \eIf{$X_1 = Y_1$}{
    $(X_2,Y_2) \leftarrow \text{Couple}((V, E \setminus \{e\}), \*p, \*\eta, \sigma, u, U)$;\\
    \Return{$(X,Y)= (X_1\cup X_2, Y_1 \cup Y_2)$}
  }
  {
    $(X_2,Y_2)\leftarrow \text{Couple}((V, E \setminus \{e\}), \*p, \*\eta, \sigma, v, U)$;\\
    \Return{$(X,Y)= (X_1 \cup X_2, Y_1 \cup Y_2)$}
  }
\end{algorithm}

The first property can be verified easily. We now prove Property (\ref{req:dist-req}) with following observations.
\begin{proposition}\label{prop:geometric}
  Let $U$ be the set of visited vertices upon termination. For any $k \ge 1$,
  \begin{align*}
    \Pr{\abs{U} \ge k} \le \tp{\frac{1-\eta_{\min}}{1+\eta_{\min}}}^{k-1}.
  \end{align*}
\end{proposition}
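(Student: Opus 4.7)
The plan is to track when $|U|$ grows during the recursive execution of \Cref{alg:coupling} and to bound each growth event by a Bernoulli trial with success probability at most $\rho := \frac{1-\eta_{\min}}{1+\eta_{\min}}$, from which the geometric decay follows immediately.

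\emph{Step 1 (isolate the growth events).} The set $U$ is modified only on the ``update $U \leftarrow U \cup \{u\}$'' line inside the outer \textbf{if} block (Line \ref{line:outer-if}), which is executed precisely when the input to a recursive call satisfies $u \notin U$. Hence $|U|$ increases by exactly $1$ per entry into this block. Moreover, each such entry either returns immediately (when the uniform $r$ drawn on Line \ref{line:sample-r} satisfies $r \le q_0$ or $r \ge q_1$), terminating the recursion along this branch, or proceeds to the edge-coupling step and triggers further recursive calls. Consequently, the event $\{|U| \ge k\}$ is contained in the event that the first $k-1$ if-block entries all produce $r \in (q_0, q_1)$.

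\emph{Step 2 (bound the per-entry survival probability).} Conditional on all randomness up to just before drawing $r$, the probability of continuing past the if-block is exactly $q_1 - q_0$. From Line \ref{line:def}, a direct computation gives
\[
  q_1 - q_0 \;=\; \frac{R}{R+\eta_u} - \frac{\eta_u R}{\eta_u R + 1} \;=\; \frac{R(1-\eta_u^2)}{(R+\eta_u)(\eta_u R + 1)}.
\]
I would then show $q_1 - q_0 \le \frac{1-\eta_u}{1+\eta_u}$: clearing denominators and cancelling $(1-\eta_u)$, this reduces to $(R-1)^2 \ge 0$, which holds for every $R \ge 0$. By the input invariant of \Cref{alg:coupling} ($\eta_u = 0$ iff $u \in U$), the condition $u \notin U$ at entry forces $\eta_u > 0$; since $\eta_u$ is read before being reset to $0$, its value here equals the original parameter at $u$ and therefore satisfies $\eta_u \ge \eta_{\min}$. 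As $\eta \mapsto \frac{1-\eta}{1+\eta}$ is decreasing on $(0,1]$, we conclude $q_1 - q_0 \le \rho$.

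\emph{Step 3 (chain the bounds).} Applying the tower property to the filtration generated by the algorithm's randomness up to the $i$-th if-block entry, Step 2 gives a conditional bound of $\rho$ on the survival probability of each such entry. Combining with Step 1,
\[
  \Pr{|U| \ge k} \;\le\; \rho^{k-1} \;=\; \left(\frac{1-\eta_{\min}}{1+\eta_{\min}}\right)^{k-1}.
\]

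The only genuinely computational step is the inequality in Step 2; the rest is bookkeeping. The mild subtlety to watch for is that $\*\eta$ is updated in place during the recursion, so one must verify that the $\eta_u$ appearing in $q_0, q_1$ is the pre-update value at the freshly visited vertex, and that the $\eta_{\min}$ in the statement refers to the original parameter vector before any coordinates have been zeroed; both hold by direct inspection of the algorithm.
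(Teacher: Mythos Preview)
Your proposal is correct and follows essentially the same approach as the paper: both argue that $\abs{U}\ge k$ forces the first $k-1$ draws of $r$ to land in $(q_0,q_1)$, and then bound $q_1-q_0$ uniformly by $\tfrac{1-\eta_{\min}}{1+\eta_{\min}}$. Your write-up is more detailed---explicitly reducing the bound to $(R-1)^2\ge 0$ and carefully noting that $\eta_u$ is read before being zeroed---whereas the paper simply asserts the maximum; but the argument is the same.
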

\begin{proof}
  Note that $\abs{U} \ge k$ implies that first $k-1$ random numbers $r_1,r_2,\ldots,r_{k-1}$ drawn in \Cref{line:sample-r} all lie in their corresponding segments. 
  Therefore,
  \begin{align*}
    \Pr{\abs{U} \ge k} &\le \tp{\max_{R>0, u\in V} \left\{ \frac{R}{R+\eta_u} - \frac{\eta_u R}{\eta_u R+1} \right\} }^{k-1} \le \tp{\frac{1-\eta_{\min}}{1+\eta_{\min}}}^{k-1}. \qedhere
  \end{align*}
\end{proof}

\begin{proposition}\label{prop:edge-bound}
  Let $U$ be the set of visited vertices upon termination and $(X,Y)$ be the returned pair of configurations in \Cref{alg:coupling}. 
  For each $e=(u,v) \in X \oplus Y$, both $u,v \in U$.
\end{proposition}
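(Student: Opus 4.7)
The plan is to prove the proposition by induction on $|E|$ (equivalently, on the depth of the recursion in \Cref{alg:coupling}), exploiting the structural invariant that whenever the algorithm detects a disagreement on the currently picked edge $e=(u,v)$, it recurses with $v$ as the new current vertex, which guarantees that $v$ is inserted into $U$ at the head of that recursive call if it is not already there.

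For the base cases, whenever the algorithm returns inside the outer \textbf{if} block of \Cref{line:outer-if} it outputs $(X,Y)=(C,C)$, so $X\oplus Y=\emptyset$ and the claim holds vacuously. For the inductive step, assume the algorithm proceeds past the outer \textbf{if} block and picks an edge $e=(u,v)\in E_u$. At this moment the parameter vertex $u$ already lies in $U$: either $u\in U$ upon entry (so the outer \textbf{if} body is skipped), or $u$ was inserted in the first line of that body. Split into two cases according to the marginal coupling of $(X_1,Y_1)$.

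If $X_1=Y_1$, then $e\notin X\oplus Y$ and the algorithm recurses with the same parameter $u$ on the smaller edge set $E\setminus\{e\}$; since $X\oplus Y=X_2\oplus Y_2$ here, the inductive hypothesis immediately finishes the case. If instead $X_1\neq Y_1$, then $e\in X\oplus Y$; we already have $u\in U$, so it suffices to show $v\in U$. The algorithm recurses with parameter $v$, and the very first action of that call (when $v\notin U$) is to insert $v$ into $U$ at the top of its outer \textbf{if} block; if $v\in U$ already, there is nothing to do. Hence after the recursive call $v\in U$. All remaining edges in $X\oplus Y$ come from $X_2\oplus Y_2$, and their endpoints lie in the final $U$ by the inductive hypothesis applied to the recursive call.

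The only bookkeeping subtlety worth being careful about is the meaning of ``final $U$'' across nested calls: because $U$ is threaded through the recursion and updated monotonically, the set upon termination of the outermost call contains the set upon termination of each descendant call, so membership of a vertex established at any recursion depth is preserved all the way up. Modulo this observation, the argument is a direct trace of the algorithm and I do not anticipate any deeper obstacle.
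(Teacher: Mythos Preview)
Your proof is correct and follows the same idea as the paper, which simply states ``This directly follows from the coupling procedure.'' You have unpacked that one-line assertion into a careful induction on $|E|$ that traces the two recursive branches of \Cref{alg:coupling}; the key observations---that $u$ enters $U$ before any edge is processed, that a disagreement on $e=(u,v)$ forces the recursion to be called with parameter $v$ (hence $v$ enters $U$), and that $U$ grows monotonically through the recursion---are exactly what makes the claim immediate.
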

\begin{proof}
  This directly follows from the coupling procedure.
\end{proof}

Now, we are ready to prove Property (\ref{req:dist-req}).
\begin{proof}[Proof of Property (\ref{req:dist-req})]
Let $U$ be the set of visited vertices upon termination and $(X,Y)$ be the returned configuration. By \Cref{prop:geometric} and \Cref{prop:edge-bound},
\begin{align*}
  \E{\abs{X \oplus Y}} &\le \sum_{k=1}^{\abs{V}} \binom{k}{2} \Pr{\abs{U} = k} \leq \sum_{k=1}^{+\infty} k \Pr{\abs{U} \ge k+1} \le \frac{1}{4\eta_{\min}^2}. \qedhere
\end{align*}
\end{proof}

Now, we only left to prove Property (\ref{req:coupling-req}).
To begin with, we need the following propositions.

\begin{proposition}\label{prop:identity}
  $\mu^{\-{GSW}}_{E,\*p,\*\eta,\as{\sigma}{u}{0}}(\+A) = q_0$  and $\mu^{\-{GSW}}_{E,\*p,\*\eta,\as{\sigma}{u}{1}}(\+A) = q_1$.
\end{proposition}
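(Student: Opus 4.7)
The plan is to reduce both identities to a single algebraic manipulation by isolating the contribution of vertex $u$ to the weight $w^{\-{GSW}}_{E,\*p,\*\eta,\tau}(S)$ and comparing the three boundary conditions $\tau=\sigma$ with $\eta_u$ replaced by $1$, $\tau = \as{\sigma}{u}{0}$, and $\tau=\as{\sigma}{u}{1}$. The crucial observation is that the product $\prod_{v:|S\cap E_v|\equiv\tau_v\,(\bmod\,2)} \eta_v$ factors into a contribution from $u$ and a contribution from $V\setminus\{u\}$, and the latter does not depend on $\tau_u$ nor on whether we modify $\eta_u$ at $u$. So apart from the $u$-factor, the weights coincide for the three systems.

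Next I would compute this $u$-factor. Under $\mu^{\-{GSW}}_{E,\*p,\as{\*\eta}{u}{1},\sigma}$ the $u$-factor is $1$ for every $S$, so if we set
\begin{align*}
W_{\+A} \;=\; \sum_{S \in \+A} w^{\-{GSW}}_{E,\*p,\as{\*\eta}{u}{1},\sigma}(S),\qquad
W_{\bar{\+A}} \;=\; \sum_{S \notin \+A} w^{\-{GSW}}_{E,\*p,\as{\*\eta}{u}{1},\sigma}(S),
\end{align*}
then $R = W_{\+A}/W_{\bar{\+A}}$ by definition. For $\as{\sigma}{u}{0}$, the $u$-factor equals $\eta_u$ exactly when $|S\cap E_u|$ is even, i.e.\ when $S\in\+A$, and equals $1$ otherwise. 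Hence the $\+A$-part of the partition function of $\mu^{\-{GSW}}_{E,\*p,\*\eta,\as{\sigma}{u}{0}}$ is $\eta_u W_{\+A}$ while the complementary part is $W_{\bar{\+A}}$. Dividing gives
\begin{align*}
\mu^{\-{GSW}}_{E,\*p,\*\eta,\as{\sigma}{u}{0}}(\+A)
\;=\;\frac{\eta_u W_{\+A}}{\eta_u W_{\+A} + W_{\bar{\+A}}}
\;=\;\frac{\eta_u R}{\eta_u R + 1}
\;=\;q_0.
\end{align*}
The identity for $\as{\sigma}{u}{1}$ is symmetric: now the $u$-factor is $\eta_u$ precisely when $S\notin\+A$, yielding a numerator $W_{\+A}$ and denominator $W_{\+A}+\eta_u W_{\bar{\+A}}$, so the ratio is $R/(R+\eta_u) = q_1$.

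There is no real obstacle here; the proof is a two-line bookkeeping argument once the $u$-factor is separated out. The only care required is to verify the stated \emph{input validity} of \Cref{alg:coupling} (positivity of partition functions after setting $\eta_u\leftarrow 0$), so that the conditional distributions and hence $R$ are well-defined; this follows from assumption (2) in the input of \Cref{alg:coupling}, since both $W_{\+A}$ and $W_{\bar{\+A}}$ are nonnegative and sum (up to the $\eta_u$ weights) to the assumed-positive partition functions.
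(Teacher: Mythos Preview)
Your argument is correct and is essentially the same as the paper's own proof: both separate out the $u$-factor in $w^{\-{GSW}}$, observe that under $\as{\*\eta}{u}{1}$ it is always $1$ while under $\as{\sigma}{u}{0}$ (resp.\ $\as{\sigma}{u}{1}$) it equals $\eta_u$ precisely on $\+A$ (resp.\ $\overline{\+A}$), and then form the ratio to recover $q_0$ and $q_1$. Your introduction of the shorthand $W_{\+A}, W_{\bar{\+A}}$ makes the bookkeeping slightly more explicit, but the content is identical.
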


\begin{proof}
  Without loss of generality, we only prove the first part. It holds that
  \begin{align*}
    \mu^{\-{GSW}}_{E,\*p,\*\eta,\as{\sigma}{u}{0}}(\+A) &= \tp{\sum_{S \in \+A} w^{\-{GSW}}_{E,\*p,\*\eta,\as{\sigma}{u}{0}}(S)}\tp{\sum_{S \in \+A} w^{\-{GSW}}_{E,\*p,\*\eta,\as{\sigma}{u}{0}}(S) + \sum_{S \not\in \+A} w^{\-{GSW}}_{E,\*p,\*\eta,\as{\sigma}{u}{0}}(S)}^{-1}\\
    &= \tp{\eta_u \sum_{S \in \+A} w^{\-{GSW}}_{E,\*p,\as{\*\eta}{u}{1},\sigma}(S)}\tp{\eta_u \sum_{S \in \+A} w^{\-{GSW}}_{E,\*p,\as{\*\eta}{u}{1},\sigma}(S) + \sum_{S \not\in \+A} w^{\-{GSW}}_{E,\*p,\as{\*\eta}{u}{1},\sigma}(S)}^{-1}\\
    &= \frac{\eta_u R}{\eta_u R+1} = q_0.
  \end{align*}
\end{proof}

\begin{fact} \label{prop:cond-dist}
  $\mu^{\-{GSW}}_{E, \*p, \as{\*\eta}{u}{0}, \as{\sigma}{u}{0}} = \mu^{\-{GSW}}_{E, \*p, \*\eta, \sigma}(\cdot \mid \overline{\+A})$ and $\mu^{\-{GSW}}_{E, \*p, \as{\*\eta}{u}{0}, \as{\sigma}{u}{1}} = \mu^{\-{GSW}}_{E, \*p, \*\eta, \sigma}(\cdot \mid \+A)$
  \footnote{We only consider the case where distributions are well-defined, i.e. $Z^{\mathrm{GSW}}_{E, \*p, \as{\*\eta}{u}{c}, \as{\sigma}{u}{c}}>0$ for $c=0,1$.}.
\end{fact}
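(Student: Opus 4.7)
The plan is to verify both equalities by directly unfolding the definition of $w^{\-{GSW}}$ and comparing the unnormalized weights on both sides. The key observation is that setting $\eta_u = 0$ precisely encodes conditioning on a parity event at $u$: the factor $\eta_u = 0$ is picked up exactly when $|S \cap E_u| \equiv \sigma_u' \pmod 2$, so the weight vanishes on that set and the distribution is supported on its complement. Applying this with $\sigma_u' = 0$ shows that $\mu^{\-{GSW}}_{E, \*p, \as{\*\eta}{u}{0}, \as{\sigma}{u}{0}}$ is supported on $\overline{\+A}$, and with $\sigma_u' = 1$ that $\mu^{\-{GSW}}_{E, \*p, \as{\*\eta}{u}{0}, \as{\sigma}{u}{1}}$ is supported on $\+A$. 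These match the supports of the conditional distributions on the right.

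The remaining step is to check that, on the relevant support, the two unnormalized weights differ by only a multiplicative constant and therefore normalize to the same distribution. For the first equality, fix any $S \in \overline{\+A}$. In $w^{\-{GSW}}_{E, \*p, \as{\*\eta}{u}{0}, \as{\sigma}{u}{0}}(S)$ the parity at $u$ is odd while the modified $\sigma_u$ is $0$, so no $\eta_u$ factor appears; the expression reduces to
\begin{equation*}
\*p^{S}\,(1-\*p)^{E \setminus S} \prod_{\substack{v \ne u \\ |S \cap E_v| \equiv \sigma_v \pmod{2}}} \eta_v .
\end{equation*}
The original weight $w^{\-{GSW}}_{E, \*p, \*\eta, \sigma}(S)$ equals the same product multiplied by $\eta_u^{[|S \cap E_u| \equiv \sigma_u]}$; but since the parity at $u$ is odd throughout $\overline{\+A}$, this extra factor is the constant $\eta_u^{[\sigma_u = 1]}$, independent of $S$. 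So the two weight functions differ by a global constant on $\overline{\+A}$, and the claim follows after renormalization. The second equality is symmetric, with $\+A$ and $\overline{\+A}$ swapped.

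I do not expect any real obstacle: the argument is purely definitional bookkeeping. The only thing to track carefully is that the factors at vertices $v \ne u$ appear identically in both weight functions, which is immediate because neither $\*\eta$ nor $\sigma$ is modified away from $u$. Conceptually, the statement records the familiar fact that for a product-form weight, conditioning on a single-coordinate event can be absorbed into a modification of the corresponding single-coordinate parameter, here the parity-indicator parameter $\eta_u$.
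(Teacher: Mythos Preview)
Your argument is correct: unfolding the weight shows that setting $\eta_u=0$ with $\sigma_u'=0$ (resp.\ $\sigma_u'=1$) forces the support to be $\overline{\+A}$ (resp.\ $\+A$), and on that support the original weight $w^{\-{GSW}}_{E,\*p,\*\eta,\sigma}$ differs only by the constant factor $\eta_u^{[\sigma_u=1]}$ (resp.\ $\eta_u^{[\sigma_u=0]}$), which disappears upon normalization. The paper states this as a \emph{Fact} with no proof, so your direct verification is exactly the intended justification.
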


Now, we are ready to prove Property~\eqref{req:coupling-req}.

\begin{proof}[Proof of Property~\eqref{req:coupling-req}]
  It suffices to prove that, for any valid input $((V,E),\*p,\*\eta,\sigma,u,U)$, a pair of configurations $(X,Y)$ drawn in procedure $\text{Couple}((V,E),\*p,\*\eta,\sigma,u,U)$ satisfies 
    \begin{align*}
      X \sim \mu^{\mathrm{GSW}}_{E,\*p,\*\eta,\sigma} \text{ and } Y \sim \mu^{\mathrm{GSW}}_{E,\*p, \*\eta, \sigma \oplus \*1_u}.
    \end{align*}
  Without loss of generality, we only prove $X \sim \mu^{\mathrm{GSW}}_{E,\*p,\*\eta,\sigma}$. We prove by induction on $m = \abs{E}$.
  
  The base case $m=0$ is trivial.
  Suppose Property~\eqref{req:coupling-req} holds for all $E$ with $\abs{E} = m' < m$.
  We will show that it also holds when $\abs{E} = m$.
  We will considering two cases: (1) $u \in U$; (2) $u \not\in U$.

  When $u \in U$, \Cref{alg:coupling} will 
  \begin{itemize}
  \item select an arbitrary $e = (u,v) \in E_u = \{f \in E \mid f=(u,y) \text{ for some $y$}\}$;
  \item sample $X_1 \sim \nu$, which is the distribution $\mu^{\mathrm{GSW}}_{E,\*p,\*\eta,\sigma}$ projected on $e$;
  \item sample $C_1$ via procedure $\text{Couple}(u,E \setminus \{e\} ,\*\eta,\sigma^\star,U)$ or $\text{Couple}(u,E \setminus \{e\} ,\*\eta,\sigma^\star,U)$, where 
    \begin{align*}
      \sigma^\star = \begin{cases}
                       \sigma &, X_1 = \emptyset;\\
                       \sigma \oplus \*1_u \oplus \*1_v &, X_1 = \{e\}\\
                     \end{cases}
    \end{align*}
  \end{itemize}
  By induction hypothesis, $C_1 \sim \mu^{\mathrm{GSW}}_{E \setminus \{e\} ,\*p,\*\eta,\sigma^\star}$.
  By the definition of $\sigma^\star$, it holds that
  \begin{align*}
    \mu^{\-{GSW}}_{E \setminus \{e\} ,\*p,\*\eta,\sigma^\star}
    = \begin{cases}
        \mu^{\-{GSW}}_{E,\*p,\*\eta,\sigma}\tp{\cdot \mid e} &, X_1 = \{e\},\\
        \mu^{\-{GSW}}_{E,\*p,\*\eta,\sigma}\tp{\cdot \mid \overline{e}} &, X_1 = \emptyset.
      \end{cases}
  \end{align*}
  Hence, $X= C_1 \cup X_1 \sim \mu^{\mathrm{GSW}}_{E,\*p,\*\eta,\sigma}$. 

  When $u \not\in U$, let $q_0,q_1$ be defined in \Cref{line:def} in \Cref{alg:coupling}. \Cref{alg:coupling} behaves as follows:
  \begin{itemize}
  \item with probability $1-q_1$, sample $X$ from distribution $\mu^{\mathrm{GSW}}_{E,\*p,\as{\*\eta}{u}{0},\as{\sigma}{u}{0}}$;\\
  \item with probability $q_0$, sample $X$ from distribution $\mu^{\mathrm{GSW}}_{E,\*p,\as{\*\eta}{u}{0},\as{\sigma}{u}{1}}$;\\
  \item with probability remaining probability $q_1-q_0$, sample $X$ from distribution $\mu^{\mathrm{GSW}}_{E,\*p,\as{\*\eta}{u}{0},\sigma}$.
  \end{itemize}
  Here, the last term follows from the analysis of previous case.
  Hence for $S \subseteq E$, it holds that
  \begin{align*}
    \Pr{X = S} = q_0 \mu^{\-{GSW}}_{E, \*p, \as{\*\eta}{u}{0}, \as{\sigma}{u}{1}}(S) + (q_1 - q_0) \mu^{\-{GSW}}_{E, \*p, \as{\*\eta}{u}{0}, \sigma} (S) + (1 - q_1) \mu^{\-{GSW}}_{E, \*p, \as{\*\eta}{u}{0}, \as{\sigma}{u}{0}} (S).
  \end{align*}
  When $\sigma_u = 0$, it holds that 
  \begin{align*}
    \Pr{X = S}
    &= q_0 \mu^{\-{GSW}}_{E, \*p, \as{\*\eta}{u}{0}, \as{\sigma}{u}{1}}(S) + (1 - q_0) \mu^{\-{GSW}}_{E, \*p, \as{\*\eta}{u}{0}, \as{\sigma}{u}{0}}(S) \\
    &= \mu^{\-{GSW}}_{E, \*p, \*\eta, \as{\sigma}{u}{0}}(\+A) \mu^{\-{GSW}}_{E, \*p, \*\eta, \sigma}(S \mid \+A) + \mu^{\-{GSW}}_{E, \*p, \*\eta, \as{\sigma}{u}{0}}(\overline{\+A}) \mu^{\-{GSW}}_{E, \*p, \*\eta, \sigma}(S \mid \overline{\+A}) \\
   (\text{by } \sigma_u = 0) \quad &= \mu^{\-{GSW}}_{E, \*p, \*\eta, \sigma} (S),
  \end{align*}
  where the second equation holds by \Cref{prop:identity}, \Cref{prop:cond-dist}.

  Similarly, when $\sigma_u = 1$, it holds that
  \begin{align*}
    \Pr{X = S}
    &= q_1 \mu^{\-{GSW}}_{E, \*p, \as{\*\eta}{u}{0}, \as{\sigma}{u}{1}}(S) + (1 - q_1) \mu^{\-{GSW}}_{E, \*p, \as{\*\eta}{u}{0}, \as{\sigma}{u}{0}}(S) \\
    &= \mu^{\-{GSW}}_{E, \*p, \*\eta, \as{\sigma}{u}{1}} (\+A) \mu^{\-{GSW}}_{E,\*p, \*\eta, \sigma}(S \mid \+A) + \mu^{\-{GSW}}_{E, \*p, \*\eta, \as{\sigma}{u}{1}} (\overline{\+A}) \mu^{\-{GSW}}_{E, \*p, \*\eta, \sigma}(S \mid \overline{\+A}) \\
   (\text{by } \sigma_u = 1) \quad &= \mu^{\-{GSW}}_{E, \*p, \*\eta, \sigma} (S),
  \end{align*}
  where the second equation holds by \Cref{prop:identity}, \Cref{prop:cond-dist}.

  Combining these two cases, we have $X \sim \mu^{\-{GSW}}_{E, \*p, \*\eta, \sigma}$ and hence complete the proof.
\end{proof}

\subsection{Lifting coupling independence} \label{sec:lift}
In this section, we will prove \Cref{lm:lifting-no-pin}.
Let $\mu$ be the distribution of random cluster model specified by graph $G = (V, E)$, and parameters $\*p \in [0, 1]^E$, $\*\lambda \in [0, 1)^V$.
Furthermore, let $\nu$ be the distribution of the subgraph-world model model specified by the same graph $G$, and parameters $\frac{\*p}{2} = \tp{\frac{p_e}{2}}_{e\in E}$ and $\eta = \tp{\frac{1 - \lambda_v}{1 + \lambda_v}}_{v \in V}$.
A natural coupling between $\mu$ and $\nu$ is observed by previous works~\cite{guo2018random, feng2022sampling}, which is summarized below.
\begin{lemma}[\text{\cite[Lemma 3.3]{feng2022sampling}}] \label{lem:couple-mu-nu}
  Let $\*q := \tp{p_i / (2 - p_i)}_{i \in E}$.
  Suppose $Z \sim \bigotimes_{i \in E} \-{Ber}(q_i)$, $X \sim \nu$, and $Y = X \cup Z$ then it holds that $Y \sim \mu$.
  Equivalently, for every $Y \subseteq E$, we have
  \begin{align} \label{eq:couple-mu-nu}
    \mu(Y) = \sum_{X\subseteq Y} \nu(X) \prod_{h \in Y\setminus X} q_h \prod_{f \in E\setminus Y} (1 - q_f).
  \end{align}
\end{lemma}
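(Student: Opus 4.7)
My plan is to verify~\eqref{eq:couple-mu-nu} by a direct calculation that reduces to a local identity on each connected component of $(V,Y)$. I would first substitute the definitions of $\mu(Y)$, $\nu(X)$, and $q_e = p_e/(2-p_e)$ into both sides. The key observation is the pair of telescopings
\[
\left(1 - \frac{p_e}{2}\right) q_e \;=\; \frac{p_e}{2}, \qquad \left(1 - \frac{p_e}{2}\right)(1 - q_e) \;=\; 1 - p_e,
\]
which cause the $X$-dependent Bernoulli factors on the right-hand side to collapse into the $X$-\emph{independent} product $\prod_{e\in Y}(p_e/2)\prod_{f\in E\setminus Y}(1-p_f)$. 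Combining this with the partition-function identity $Z^{\-{RC}}_{E,\*p,\*\lambda} = \bigl(\prod_{v} (1+\lambda_v)\bigr) Z^{\-{SW}}_{E,\*p/2,\*\eta}$ from \Cref{prop:partition-func}, the whole claim reduces to the purely combinatorial identity
\[
\sum_{X \subseteq Y}\,\prod_{v\,:\,|X\cap E_v|\text{ odd}} \eta_v \;=\; \frac{2^{|Y|}\,\prod_{C\in \kappa(V,Y)}\bigl(1 + \prod_{j\in C}\lambda_j\bigr)}{\prod_{v\in V}(1+\lambda_v)}.
\]

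The next step is to observe that both sides factor over the connected components of $(V,Y)$: since every edge incident to $v$ lies in the component of $v$, the left-hand sum decomposes as a product over components, while the right-hand side is manifestly multiplicative. This reduces matters to the connected case, where I must show, for any connected subgraph $(C, Y_C)$,
\[
\sum_{X \subseteq Y_C}\,\prod_{v\in C\,:\, |X\cap E_v|\text{ odd}}\eta_v \;=\; \frac{2^{|Y_C|}\bigl(1 + \prod_{j\in C}\lambda_j\bigr)}{\prod_{v\in C}(1+\lambda_v)}.
\]

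To prove this local identity, I would use a Fisher-style signed expansion: write $\eta_v^{|X\cap E_v|\bmod 2} = \frac{1+\eta_v}{2} + \frac{1-\eta_v}{2}(-1)^{|X\cap E_v|}$, expand the product over $v\in C$ into $2^{|C|}$ branch-choice terms indexed by $\alpha\in\{0,1\}^{C}$, and interchange the order of summation. For each fixed $\alpha$, the inner sum over $X\subseteq Y_C$ factors edge-by-edge as $\prod_{e=(u,v)\in Y_C}\bigl(1+(-1)^{\alpha_u+\alpha_v}\bigr)$, which vanishes whenever $\alpha$ differs across some edge. By connectivity of $(C,Y_C)$, only the two constant vectors $\alpha\equiv 0$ and $\alpha\equiv 1$ survive, contributing $2^{|Y_C|}\bigl(\prod_{v\in C}\frac{1+\eta_v}{2} + \prod_{v\in C}\frac{1-\eta_v}{2}\bigr)$; substituting $\frac{1+\eta_v}{2}=\frac{1}{1+\lambda_v}$ and $\frac{1-\eta_v}{2}=\frac{\lambda_v}{1+\lambda_v}$ produces the right-hand side. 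The main obstacle is not any single step but the bookkeeping across the reduction: the two algebraic telescopings, the partition-function substitution, and the sum-interchange must all be kept consistent, after which connectivity forces the answer.
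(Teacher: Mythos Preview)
Your argument is correct, but note that the paper does not actually prove \Cref{lem:couple-mu-nu}: it is quoted with a citation to \cite[Lemma~3.3]{feng2022sampling} and then used as a black box in \Cref{sec:lift}. So there is no in-paper proof to compare against.

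That said, your self-contained verification is sound. The two telescoping identities $(1-p_e/2)\,q_e = p_e/2$ and $(1-p_e/2)(1-q_e) = 1-p_e$ do collapse the $X$-dependent Bernoulli factors to $\prod_{e\in Y}(p_e/2)\prod_{f\notin Y}(1-p_f)$; the normalizing constants match via \Cref{prop:partition-func}; the factorization of the remaining sum over connected components of $(V,Y)$ is legitimate because for $X\subseteq Y$ the parity $|X\cap E_v|$ sees only edges of $Y$ inside $v$'s component; and in the connected case your Fisher-style expansion correctly kills every non-constant $\alpha$ by connectivity, leaving $2^{|Y_C|}\bigl(\prod_v\frac{1+\eta_v}{2}+\prod_v\frac{1-\eta_v}{2}\bigr)$, which becomes $\frac{2^{|Y_C|}(1+\prod_{j\in C}\lambda_j)}{\prod_{v\in C}(1+\lambda_v)}$ under $\eta_v=\frac{1-\lambda_v}{1+\lambda_v}$. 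This furnishes an independent proof of the cited lemma that the paper itself omits.
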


Now, fix $e \in E$, let $(X_0, X_1)$ be a coupling of $\nu(\cdot \mid \overline{e})$ and $\nu(\cdot \mid e)$.
Let $t_e := \frac{q_e \nu(\overline{e})}{q_e\nu(\overline{e}) + \nu(e)}$ be a real number.
We will construct $(Y_0, Y_1)$ as a coupling of $\mu(\cdot \mid \overline{e})$ and $\mu(\cdot \mid e)$ as follow:
\begin{itemize}
\item sample $Z \sim \bigotimes_{f \in E}\-{Ber}\tp{q_f}$;
\item with probability $t_e$, let $Y_0 = (X_0 \cup Z) \setminus \{e\}$ and $Y_1 = (X_0 \cup Z) \cup \{e\}$;
\item with probability $1 - t_e$, let $Y_0 = (X_0 \cup Z) \setminus \{e\}$ and $Y_1 = X_1 \cup Z$.
\end{itemize}

Now, \Cref{lm:lifting-no-pin} could be simply proved by
\begin{align*}
  \E{\abs{Y_0 \oplus Y_1}}
  &= t_e + (1 - t_e) \cdot \E{\abs{(X_0 \cup Z)\setminus \{e\} \oplus X_1 \cup Z}} \\
  &\leq t_e + (1 - t_e) \cdot \E{\abs{X_0 \oplus X_1}} \leq C,
\end{align*}
where in the last inequality, we use the fact that $C \geq 1$, which holds by definition.
 
Now, we only need to verify that $Y_0, Y_1$ follow the correct distribution as we claimed.
For convenience, for $S \subseteq E$, we use $\*q^S$ to denote $\prod_{i \in S} q_i$.
Then for any $Y \subseteq E \setminus \{e\}$, 
\begin{align*}
  \Pr{Y_0 = Y}
  &= \sum_{T \subseteq Y} \nu(T \mid \overline{e}) \*q^{Y\setminus T} (1 - \*q)^{E\setminus Y \setminus \{e\}} \\
  &= \frac{\sum_{T \subseteq Y} \nu(T) \*q^{Y\setminus T} (1 - \*q)^{E\setminus Y}}{\nu(\overline{e}) (1 - q_e)}
   \overset{(\star)}{=} \frac{\mu(Y)}{\mu(\overline{e})} = \mu(Y \mid \overline{e}),
\end{align*}
where in $(\star)$ we use \eqref{eq:couple-mu-nu} and the fact that $\mu(\overline{e}) = \nu(\overline{e}) (1 - q_e)$, which could be implied from \Cref{lem:couple-mu-nu}.
A similar calculation shows that for any $\{e\} \subseteq Y \subseteq E$,
\begin{align*}
  \Pr{Y_1 = Y}
  &= t_e \sum_{T \subseteq Y \setminus\{e\}} \nu(T \mid \overline{e}) \*q^{Y\setminus\{e\}\setminus T} (1 - \*q)^{E\setminus Y} + (1 - t_e) \sum_{\{e\} \subseteq T \subseteq Y} \nu(T\mid e) \*q^{Y\setminus T} (1 - \*q)^{E\setminus Y} \\
  &= \tp{\textstyle t_e \sum_{T\subseteq Y\setminus \{e\}}\frac{\nu(T)}{\nu(\overline{e})} \*q^{Y\setminus\{e\}\setminus T} + (1 - t_e) \sum_{\{e\} \subseteq T \subseteq Y} \frac{\nu(T)}{\nu(e)} \*q^{Y\setminus T}} (1 - \*q)^{E\setminus Y} \\
  &\overset{(+)}{=} \frac{\tp{\sum_{T\subseteq Y\setminus\{e\}} \nu(T) \*q^{Y\setminus T} + \sum_{\{e\} \subseteq T \subseteq Y} \nu(T) \*q^{Y\setminus T}}(1 - \*q)^{E\setminus Y}}{q_e \nu(\overline{e})  + \nu(e)} \\
  &= \frac{\sum_{T\subseteq Y} \nu(T) \*q^{Y\setminus T}(1 - \*q)^{E\setminus Y}}{q_e \nu(\overline{e})  + \nu(e)}
   \overset{(\star)}{=} \frac{\mu(Y)}{\mu(e)} = \mu(Y \mid e),
\end{align*}
where $(+)$ holds by the fact that $t_e = \frac{q_e \nu(\overline{e})}{q_e \nu(\overline{e}) + \nu(e)}$ and $(\star)$ holds by \Cref{lem:couple-mu-nu}.

\section{Rapid mixing of Glauber dynamics in good regime}\label{sec:good-regime}
Let $\mu$ be the distribution of the random cluster model specified by graph $G=(V,E)$, parameters $\*p \in (0,1)^E$ and $\*\lambda \in (0,1)^V$. 
Let $P^{\-{GD}}_\mu$ be the Glauber dynamics with stationary distribution $\mu$.


We restate \Cref{lem:GD} for convenience.


\begin{lemma}\label{thm:good-mixing}
  Let $\epsilon \in \tp{0,\frac{1}{4}}$ be a real number.
  If
  \begin{align}\label{eq:good-mix-cond}
    K := \tp{1-p_{\min}}\log n \leq \min\left\{10^{-7}, \frac{1-\lambda_{\max}}{27} \right\},
  \end{align}
  then the mixing time of Glauber dynamics satisfies
  \begin{align*}
    T_{\mathrm{mix}}(\epsilon) \le 25 m \log m \log (1/\epsilon) + 1.
  \end{align*}
  Furthermore, if
  \begin{align*}
    K \leq \min\left\{10^{-5} \exp\tp{-\frac{\log (8/\epsilon)}{\log n}}, \frac{1 - \lambda_{\max}}{27} \right\},
  \end{align*}
  then the mixing time of Glauber dynamics initialized from $E$ satisfies
  \begin{align*}
    T_{\mathrm{mix}}(\epsilon,E) \le 2m(\log m + \log(2/\epsilon)).
  \end{align*} 
\end{lemma}

We remark that a trade-off exists between the mixing time and the condition on $K$. 
This trade-off arises from a subtle distinction in the proof of the mixing time. 
To prove the first part, we employ the burn-in method and a ``doubling'' argument, whereas the second part does not rely on these techniques. 
We will use the first part of~\Cref{thm:good-mixing} in the subsequent analysis, and the proof of the second part is deferred to~\Cref{append:proof}.

In the rest of this section, we will always assume that $K$ is picked as in \eqref{eq:good-mix-cond}.
Define $\+C$ by
\begin{align*}
  \+C := \{ S \subseteq V \mid \abs{S} \leq n/2 \text{ and } \abs{E(S, V\setminus S)} \geq \abs{S} \log n\},
\end{align*}
where $E(S,V \setminus S)$ is the set of edges between $S$ and $V \setminus S$.
Furthermore, define the good event $\+G$  by
\begin{align*}
  \+G := \{ X \subseteq E \mid \forall S \in \+C, \abs{X \cap E(S,V\setminus S)} > 0\}.
\end{align*}

Intuitively, the good event $\+G$ is the set of configurations $X \subseteq E$, such that there exists no connected components $C$ in graph $(V,X)$ with large $\abs{E(C, V\setminus C)}$.

Let $(X_t)_{t \ge 0}$ and $(Y_t)_{t \ge 0}$ be Glauber dynamics starting from different configurations $X_0$ and $Y_0$. 
We will show that after a burn-in phase of length $O(m \log m)$, the good event $\+G$ happens in high probability.
Furthermore, when $X_t, Y_t \in \+G$, there exists a coupling so that the hamming distance of $X_t$ and $Y_t$ contracts with rate $1-\frac{1}{2m}$, where $m=\abs{E}$.

\begin{lemma}\label{lem:bad-event-bound}
  If $t \ge \theta m \log m$ for some $\theta > 0$, it holds that 
  \begin{align*}
  \Pr{X_t \not\in \+G} \leq n^{\log (27 K)} + m^{1-\theta} \text{ and } \Pr{Y_t \not\in \+G} \leq n^{\log (27 K)} + m^{1-\theta}.
  \end{align*}
\end{lemma}
\begin{lemma}\label{lem:pc-good}
  When $K \leq \frac{1 - \lambda_{\max}}{27}$, there exists a coupling of $(X_t)_{t \ge 0}$ and $(Y_t)_{t \ge 0}$ so that when $X_t,Y_t \in \+G$,
  \begin{align*}
    \E{\abs{X_{t+1} \oplus Y_{t+1}} \mid X_t, Y_t} \leq \tp{1 - \frac{1}{2m}} \abs{X_t \oplus Y_t}.
  \end{align*}
\end{lemma}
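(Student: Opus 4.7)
The plan is to prove the contraction via path coupling, combined with a detailed analysis of the one-step Glauber transition on the random cluster model. By the Bubley--Dyer lemma, it suffices to consider pairs $W, W' \subseteq E$ differing in a single edge $e^\star = (u^\star, v^\star) \in W \setminus W'$ and, under the hypothesis that both are in $\+G$, to show that the coupled transitions satisfy $\E[|W_1 \oplus W_1'|] \leq 1 - \tfrac{1}{2m}$. The coupling selects the same edge $e \in E$ uniformly in both chains and optimally couples the Bernoulli marginal on $e$. When $e = e^\star$, one has $W \setminus \{e^\star\} = W' \setminus \{e^\star\}$, so the conditional distributions coincide and the chains coalesce at $e^\star$; when $e \neq e^\star$, the coupling fails with probability $d_e := |p_{W,e} - p_{W',e}|$, in which case the Hamming distance becomes $2$. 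Thus
\[
  \E\!\left[|W_1 \oplus W_1'|\right] \;=\; \frac{m-1}{m} \;+\; \frac{1}{m}\sum_{e \neq e^\star} d_e,
\]
and the lemma reduces to the inequality $\sum_{e \neq e^\star} d_e \leq 1/2$.

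The core structural observation uses the explicit formula \eqref{eq:rc-transition}. Let $C_w^Z$ denote the connected component of a vertex $w$ in the graph $(V, Z \setminus \{e\})$. Because $W = W' \cup \{e^\star\}$ and $e \neq e^\star$, the graphs $(V, W \setminus \{e\})$ and $(V, W' \setminus \{e\})$ differ only by the single edge $e^\star$, so their component partitions agree except that $C_{u^\star}^{W'}$ and $C_{v^\star}^{W'}$ are merged in $(V, W \setminus \{e\})$ into $C^\star := C_{u^\star}^{W'} \cup C_{v^\star}^{W'}$. Consequently $d_e = 0$ unless $e$ is ``cut'' by the partition $\{C_{u^\star}^{W'}, C_{v^\star}^{W'}\}$ of $C^\star$ or bridges $C^\star$ and $V \setminus C^\star$. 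The algebraic identity
\[
  1 - \frac{1 + a}{1 + a + (1-p)b} \;=\; \frac{(1-p)b}{1 + a + (1-p)b},
\]
applied with $a = \lambda^{C_u \cup C_v}$ and $b = \lambda^{C_u} + \lambda^{C_v}$ to both $W$ and $W'$, yields the uniform bound
\[
  d_e \;\leq\; 2\,(1 - p_{\min})\,\bigl(\lambda^{C_{u^\star}^{W'}} + \lambda^{C_{v^\star}^{W'}}\bigr).
\]

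The good event now controls the number of contributing edges. Since $W' \in \+G$, every connected component $C$ of $(V, W')$ with $|C| \leq n/2$ satisfies $|E(C, V\setminus C)| < |C|\log n$, and a short argument transfers this estimate, up to a harmless additive correction, to the components $C_{u^\star}^{W'}, C_{v^\star}^{W'}$ of the slightly perturbed graph $(V, W' \setminus \{e\})$. Using $1 - p_{\min} \leq K/\log n$ and the elementary bound $\max_{s \geq 1} s\,\lambda_{\max}^s = O\!\bigl(1/(1-\lambda_{\max})\bigr)$, the contribution from a side $C$ of size $s \leq n/2$ is at most $2(K/\log n) \cdot \lambda_{\max}^s \cdot s\log n = 2K\,s\,\lambda_{\max}^s$, so the total is $O\!\bigl(K/(1-\lambda_{\max})\bigr) \leq 1/2$ by the hypothesis $K \leq (1 - \lambda_{\max})/27$. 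The case $|C| > n/2$ contributes negligibly because $\lambda^C \leq \lambda_{\max}^{n/2}$ is super-polynomially small for $n$ large (the assumption $n \geq 3$ together with the choice of $K$ is enough, since the bad case also requires $s\log n$ edges to contribute nontrivially).

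The main obstacles I anticipate are twofold. First, path coupling normally requires the distance-$1$ contraction to hold along a Hamming path connecting $X_t$ and $Y_t$, while the structural argument uses $W' \in \+G$, and intermediate configurations along such a path may leave $\+G$; this can be handled either by choosing the decomposition of $X_t \oplus Y_t$ carefully or by telescoping $d_e(X_t, Y_t) \leq \sum_i d_e(W_i, W_{i+1})$ and grouping contributions so that only the components of endpoint configurations are invoked. Second, one must uniformly transfer the good event bound from components of $(V, W')$ to components of $(V, W' \setminus \{e\})$, which requires verifying that removing a single edge only splits a component into two pieces whose boundaries can still be controlled by $|C|\log n$. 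Both issues are technical bookkeeping and do not affect the overall $O(K/(1-\lambda_{\max}))$ bound that delivers the contraction.
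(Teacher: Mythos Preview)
Your setup matches the paper: path coupling reduces to adjacent pairs $W=W'\cup\{e^\star\}$, and the goal becomes $\sum_{e\neq e^\star} d_e\le 1/2$. Your per-edge bound $d_e\le 2(1-p_{\min})(\lambda^{C_{u^\star}^{W'\setminus\{e\}}}+\lambda^{C_{v^\star}^{W'\setminus\{e\}}})$ is correct, and for edges $e\notin W'$ (so $W'\setminus\{e\}=W'$) the components are independent of $e$ and your good-event counting is exactly what the paper does in its Cases (2b) and (3b).

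The genuine gap is in the edges $e\in W'$. For these, $C_{u^\star}^{W'\setminus\{e\}}$ depends on $e$, and your proposed fix --- ``transfer the good-event bound from components of $(V,W')$ to components of $(V,W'\setminus\{e\})$'' --- cannot work. The good event only controls the boundary of sets that are unions of components of the configuration $W'$; a piece $A\subsetneq C_{u^\star}^{W'}$ obtained by deleting a bridge $e$ is not such a set, and $\abs{E(A,V\setminus A)}$ is not bounded by $\abs{A}\log n$. Concretely, if $C_{u^\star}^{W'}$ is a path $u^\star=x_0\!-\!x_1\!-\!\cdots\!-\!x_\ell$, then each internal edge $e_i=(x_{i-1},x_i)$ contributes roughly $(1-p_{\min})\lambda_{\max}^{i}$; there is no uniform per-edge estimate times a good-event count that reproduces $\sum_i\lambda_{\max}^i\le 1/(1-\lambda_{\max})$.

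The paper handles precisely these cases by a telescoping argument that you are missing. In its Case (2a) ($e\in W'$ and removing $e$ separates $u^\star$ from $v^\star$) the contributing edges lie on a single $u^\star$--$v^\star$ path in $(V,W')$, and the two pieces have sizes at least $i$ and $\ell+1-i$, giving $\sum_i(\lambda_{\max}^{i}+\lambda_{\max}^{\ell+1-i})\le 2/(1-\lambda_{\max})$. In Case (3a) ($e\in W'$ is a bridge inside $C_{u^\star}^{W'}$) one takes a spanning tree rooted at $u^\star$, orders its bridges by the size of the piece containing $u^\star$, and again gets a geometric sum. This telescoping is the essential missing ingredient and is not a ``harmless additive correction'' to the good-event estimate.

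For your first obstacle, the paper's resolution is simpler than either of your suggestions: take the Hamming path from $X_t$ to $Y_t$ that first inserts all of $Y_t\setminus X_t$ (reaching $X_t\cup Y_t$) and then deletes $X_t\setminus Y_t$. Every intermediate configuration contains $X_t$ or $Y_t$, and since $\mathcal G$ is monotone upward, all intermediates stay in $\mathcal G$; moreover the smaller configuration of each adjacent pair is the one used in the good-event step.
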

The proof of \Cref{lem:bad-event-bound} and \Cref{lem:pc-good} is deferred to \Cref{sec:bad-event-bound} and \Cref{sec:pc-good} respectively.
To prove \Cref{thm:good-mixing} via \Cref{lem:bad-event-bound} and \Cref{lem:pc-good}, we need the coupling with stationary lemma.

\begin{lemma}[\text{\cite[Theorem 3.1]{hayes2006coupling}}]\label{lem:cfs}
  Let $(X_t)_{t \ge 0}$ and $(Y_t)_{t \ge 0}$ be coupled Markov chains.
  Let $\epsilon \in (0,1)$, $T_0, T_1 \in \mathbb{N}$ be parameters satisfying $T_0 < T_1$. 
  Furthermore, $\+E_t$ denote the event 
  \begin{align*}
    \E{\abs{X_{t+1} \oplus Y_{t+1}} \mid X_t, Y_t} \leq \tp{1 - \frac{1}{2m}} \abs{X_t \oplus Y_t}.
  \end{align*}
  Suppose $\Pr{\overline{ \+E_t}} \leq \delta$ for all $T_0 \le t < T_1$, then 
  \begin{align*}
    \Pr{X_{T_1} \neq Y_{T_1}} \leq \tp{1 - \frac{1}{2m}}^{T_1 - T_0}m + \delta \cdot 2m^2.
  \end{align*}
\end{lemma}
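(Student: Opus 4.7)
The plan is to introduce $Z_t := \abs{X_t \oplus Y_t}$ and track its expectation through a one-step linear recursion. Since $X_t, Y_t \subseteq E$, the pathwise bound $Z_t \leq m$ always holds. The central observation is that the event $\+E_t$ as defined is a deterministic function of the pair $(X_t, Y_t)$---it refers only to the conditional expectation $\E{Z_{t+1} \mid X_t, Y_t}$---so we may split $\E{Z_{t+1}}$ by conditioning on $(X_t, Y_t)$ and applying the indicator of $\+E_t$ inside the tower-of-expectations decomposition.

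On $\+E_t$ the hypothesized contraction gives $\E{Z_{t+1} \mid X_t, Y_t} \leq (1 - 1/(2m)) Z_t$, while on $\overline{\+E_t}$ we retain only the trivial bound $Z_{t+1} \leq m$, which contributes at most $m \cdot \Pr{\overline{\+E_t}} \leq m\delta$. Combining the two pieces yields the recursion
\begin{align*}
  \E{Z_{t+1}} \leq \tp{1 - \frac{1}{2m}} \E{Z_t} + m\delta.
\end{align*}
I would then iterate from $\E{Z_0} \leq m$ and bound the accumulated defects by the infinite geometric sum $m\delta \cdot \sum_{i=0}^{\infty} (1 - 1/(2m))^i = 2m^2 \delta$, obtaining $\E{Z_T} \leq m(1 - 1/(2m))^T + 2m^2 \delta$. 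The conclusion follows from Markov's inequality applied to the nonnegative integer variable $Z_T$: $\Pr{X_T \neq Y_T} = \Pr{Z_T \geq 1} \leq \E{Z_T}$, matching the claim.

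The proof is essentially mechanical and I foresee no serious obstacle. The two points requiring care are (i) verifying that $\+E_t$ is genuinely $(X_t, Y_t)$-measurable so that the indicator decomposition is legitimate, and (ii) resisting the temptation to union-bound the bad events over all $T$ steps, which would give the far weaker defect $T\delta$. The improvement to $2m^2\delta$ comes precisely from absorbing each per-step defect of size $m\delta$ into the geometric damping provided by the contraction on subsequent good steps, so the total accumulates only to a stationary-level $O(m^2\delta)$ rather than the pessimistic $T\delta$.
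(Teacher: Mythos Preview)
The paper does not supply its own proof of this lemma: it is quoted verbatim as \cite[Theorem~3.1]{hayes2006coupling} and then invoked as a black box in the proof of \Cref{thm:good-mixing}. Your argument is correct and is essentially the standard proof of the Hayes coupling-from-stationary lemma: track $Z_t = \abs{X_t \oplus Y_t}$, use measurability of $\+E_t$ with respect to $(X_t, Y_t)$ to derive the one-step recursion $\E{Z_{t+1}} \le (1-\tfrac{1}{2m})\E{Z_t} + m\delta$, iterate, sum the geometric series to $2m$, and apply Markov's inequality. Nothing is missing.
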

We are now ready to prove \Cref{thm:good-mixing}.
\begin{proof}[Proof for the first part of~\Cref{thm:good-mixing}]
  Without loss of generality, we will only prove the case where $\epsilon = \frac{1}{4}$. 
  This can be extended to general case by a doubling argument~\cite{levin2017markov}.
  Moreover, we assume $n \geq 2$ and $m \geq 2$.
  Since $n \leq 1$ implies that $m = 0$ and when $m \leq 1$, Glauber dynamics mixes in one step.
  
  Let $T_0 = 9m \log m$, $T_1 = 15 m \log m$ and $X_0,Y_0$ be arbitrary starting configurations. 
  We consider the Glauber dynamics $(X_t)_{t \ge 0}$ and $(Y_t)_{t \ge 0}$ starting from $X_0$ and $Y_0$ respectively.
  By \Cref{lem:bad-event-bound} and \Cref{lem:pc-good}, there exists a coupling of $(X_t)_{t \ge 0}$ and $(Y_t)_{t \ge 0}$ such that for all $T_0 \le t < T_1$, the event $\+E_t$ happens with probability no more than $2 n^{\log (27 K)} + 2 m^{-8}$, where $\+E_t$ denotes the event $\mathbb{E}\left[\abs{X_{t+1} \oplus Y_{t+1}} \mid X_t, Y_t \right] \le \tp{1-\frac{1}{2m}} \abs{X_t \oplus Y_t}$.
  By $K \leq 10^{-7}$, it holds that
  \begin{align*}
    \tp{2n^{\log(27K)} + 2m^{-8}} \cdot 2m^2 \leq 4 n^{4 + \log(27K)} + 4m^{-6} \leq 4n^{-6} +4 m^{-6} \le \frac{1}{8},
  \end{align*}
  where the last inequality follows from our assumption $n,m \ge 2$.
  Therefore, by \Cref{lem:cfs},
  \begin{align*}
    \Pr{X_{T_1} \neq Y_{T_1}} &\le \tp{1-\frac{1}{2m}}^
{T_1-T_0} m + \frac{1}{8} \leq \frac{1}{4}.
 \end{align*}
 Finally, by the geometric convergence of Markov chain, it holds that
  \begin{align*}
    T_{\mathrm{mix}}(\epsilon)
    &\le 15 m \log m \log_2 (1/\epsilon)
    \le 25 m \log m \log (1/\epsilon) + 1,
  \end{align*}
  where the $+1$ is used to handle the cases where $n \leq 1$ or $m \leq 1$.
\end{proof}

\subsection{Contraction in $\+G$ (proof of \Cref{lem:pc-good})}\label{sec:pc-good}
For any $S, T \in \+G$, let $e_1,e_2,\ldots,e_x$ be edges in $S \setminus T$, and $f_1,f_2,\ldots,f_y$ be edges in $T \setminus S$.
We design the following path of configurations $P=(P_0,P_1,\ldots,P_{x+y})$ from $S$ to $T$.
\begin{align*}
  \forall 0 \le i \le s+t, \quad P_i = 
  \begin{cases}
    S & \text{if }i=0,\\
    P_{i-1} \cup \{f_i\} & \text{if }1 \le i \le y,\\
    P_{i-1} \setminus \{e_{i-y}\} & \text{if }i > y.
  \end{cases}
\end{align*}
Note that $\abs{P_{i-1} \oplus P_i} = 1$ for all $1 \le i \le x+y$, $P_i \in \+G$ for all $0 \le i \le x+y$, and the length of this path is exactly $\abs{S\oplus T}$. 
Hence, by the standard path coupling argument~\cite{bubley1997path}, 
it suffices to prove that, for every $X_t, Y_t \in \+G$ satisfying $\abs{X_t \oplus Y_t} = 1$,
\begin{align} \label{eq:pc-target}
  \E{\abs{X_{t+1} \oplus Y_{t+1}} \mid X_t, Y_t} &\leq 1 - \frac{1}{2m}.
\end{align}

Without loss of generality, we assume that $Y_t = X_t \cup \{e\}$, where $e = (u,v) \in E$.
Using the one step optimal coupling of the Glauber dynamics, it holds that
\begin{align*}
  \E{\abs{X_{t+1} \oplus Y_{t+1}} \mid X_t, Y_t} = 1 - \frac{1}{m} + \frac{1}{m}\sum_{f \in E \setminus \{e\}} d(X_t,Y_t,f),
\end{align*}
where $d(X_t,Y_t,f)=\abs{\frac{\mu(X_t \cup \{f\})}{\mu(X_t \cup \{f\}) + \mu(X_t \setminus \{f\})} - \frac{\mu(Y_t \cup \{f\})}{\mu(Y_t \cup \{f\}) + \mu(Y_t \setminus \{f\})}}$.



We first discuss the value of $d(X_t,Y_t,f)$. 
Fix edge $f= (x,y) \neq e$, let 
\begin{align*}
  S_{\overline{f}, \overline{e}} := X_t \setminus \{f\}, S_{f, \overline{e}} := X_t \cup\{f\}, S_{\overline{f}, e} := Y_t \setminus \{f\}, S_{f, e} := Y_t \cup\{f\}.
\end{align*}

It can be verified that
\begin{align*}
  \frac{\mu(X_t \cup \{f\})}{\mu(X_t \cup \{f\}) + \mu(X_t \setminus \{f\})} = 
  \begin{cases}
    p_f, & \text{if }C_{x, \overline{f},\overline{e}} = C_{y, \overline{f},\overline{e}}\;,\\
    \frac{1+\*\lambda^{C_{x, \overline{f},\overline{e}} \cup C_{y, \overline{f},\overline{e}}}}{1+\*\lambda^{C_{x, \overline{f},\overline{e}} \cup C_{y, \overline{f},\overline{e}}} + (1-p_f) \tp{\*\lambda^{C_{x, \overline{f},\overline{e}}} + \*\lambda^{C_{y, \overline{f},\overline{e}}}}} & \text{otherwise},
  \end{cases}
\end{align*}

\begin{align*}
  \frac{\mu(Y_t \cup \{f\})}{\mu(Y_t \cup \{f\}) + \mu(Y_t \setminus \{f\})} = 
  \begin{cases}
    p_f & \text{if }C_{x, \overline{f},e} = C_{y, \overline{f},e}\;,\\
    \frac{1+\*\lambda^{C_{x, \overline{f},e} \cup C_{y, \overline{f},e}}}{1+\*\lambda^{C_{x, \overline{f},e} \cup C_{y, \overline{f},e}} + (1-p_f) \tp{\*\lambda^{C_{x, \overline{f},e}} + \*\lambda^{C_{y, \overline{f},e}}}} & \text{otherwise},
  \end{cases}
\end{align*}
where $C_{r,i,j}$ is the connected component containing $r$ in graph $(V,S_{i,j})$ where $i \in \{f,\overline{f}\}$ and $j \in \{e,\overline{e}\}$, and $\*\lambda^S = \prod_{r \in S} \lambda_r$.
For general subset $R \subseteq E$, we also use $C_r(R)$ to denote the connected component containing $r$ in graph $(V, R)$.
We consider the following cases as illustrated in \Cref{fig:case2-3}.
\begin{enumerate}
\item $C_{u, \overline{f}, \overline{e}} = C_{v, \overline{f},\overline{e}}$;
\item $C_{u,\overline{f}, \overline{e}} \neq C_{v, \overline{f},\overline{e}}$ and $C_{u,f, \overline{e}} = C_{v,f, \overline{e}}$;
\item $C_{u,\overline{f}, \overline{e}} \neq C_{v, \overline{f},\overline{e}}$ and $C_{u,f, \overline{e}} \neq C_{v,f, \overline{e}}$.
\end{enumerate}

  \begin{figure}
    \begin{tikzpicture}[gnode/.style={circle, draw=green!60, fill=green!30, very thick, minimum size=2mm, inner sep=0pt},
      rnode/.style={circle, draw=red!60, fill=red!30, very thick, minimum size=2mm, inner sep=0pt},
      bnode/.style={circle, draw=blue!60, fill=blue!30, very thick, minimum size=2mm,inner sep=0pt},
      cnode/.style={circle, draw=cyan!5, fill=cyan!5, very thick, minimum size=2mm,inner sep=0pt},
      rect/.style={rectangle, draw=red!40, fill=red!5, ultra thick, rounded corners, minimum width=40mm, minimum height=30mm},
      arect/.style={rectangle, draw=red!40, fill=red!5, ultra thick, rounded corners, minimum width=60mm, minimum height=30mm},
      brect/.style={rectangle, draw=green!40, fill=green!5, ultra thick, rounded corners, minimum width=20mm, minimum height=30mm}
      ] 
      \node[brect] (rec1) at (-3.9,3){};
      \draw[draw=cyan!40, fill=cyan!5, very thick] (-3.9,3) ellipse (0.7 and 1.2);
      \node[bnode] (n1) at (-3.9,2.5){};
      \node[xshift=-8pt] at (n1.south) {\tiny $v$}; 
      \node[bnode] (n2) at (-3.9,3.5){};
      \node[xshift=-8pt] at (n2.south) {\tiny $u$}; 
      \path[dotted, very thick] (n1) edge node[xshift=5pt] {\tiny $e$} (n2);

      \node[rect] (rec1) at (0,3){};
      \draw[draw=cyan!40, fill=cyan!5, very thick] (-0.9,3) ellipse (0.7 and 1.2);
      \draw[draw=cyan!40, fill=cyan!5, very thick] (0.9,3) ellipse (0.7 and 1.2);
      \node[bnode] (n1) at (-0.5,2.7){};
      \node[yshift=-5pt] at (n1.south) {\tiny $x$}; 
      \node[bnode] (n2) at (0.5,2.7){};
      \node[yshift=-5pt] at (n2.south) {\tiny $y$}; 
      \path[dotted, very thick] (n1) edge node[yshift=5pt] {\tiny $f$} (n2);
      \node[bnode] (n1) at (-0.5,3.3){};
      \node[yshift=-5pt] at (n1.south) {\tiny $u$}; 
      \node[bnode] (n2) at (0.5,3.3){};
      \node[yshift=-5pt] at (n2.south) {\tiny $v$}; 
      \path[dotted, very thick] (n1) edge node[yshift=5pt] {\tiny $e$} (n2);

      \node[arect] (rec1) at (5.9,3){};
      \draw[draw=cyan!40, fill=cyan!5, very thick] (4.1,3) ellipse (0.7 and 1.2);
      \draw[draw=cyan!40, fill=cyan!5, very thick] (5.9,3) ellipse (0.7 and 1.2);
      \draw[draw=cyan!40, fill=cyan!5, very thick] (7.7,3) ellipse (0.7 and 1.2);
      \node[bnode] (n1) at (4.5,3){};
      \node[yshift=-5pt] at (n1.south) {\tiny $x$}; 
      \node[bnode] (n2) at (5.5,3){};
      \node[yshift=-5pt] at (n2.south) {\tiny $y$}; 
      \path[dotted, very thick] (n1) edge node[yshift=5pt] {\tiny $f$} (n2);
      \node[bnode] (n1) at (6.3,3){};
      \node[yshift=-5pt] at (n1.south) {\tiny $u$}; 
      \node[bnode] (n2) at (7.3,3){};
      \node[yshift=-5pt] at (n2.south) {\tiny $v$}; 
      \path[dotted, very thick] (n1) edge node[yshift=5pt] {\tiny $e$} (n2);
    \end{tikzpicture}
    \caption{Illustration of three cases. Each ellipse is a connected component in graph $(V,S_{\overline{f},\overline{e}})$.}\label{fig:case2-3}
    \end{figure}
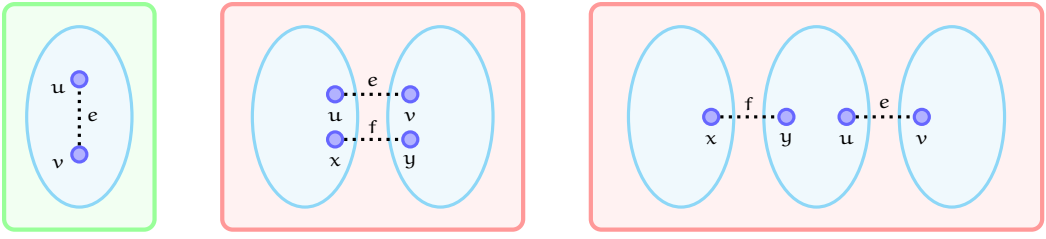

\subsubsection{Case (1)}
In this case, $(V, S_{\overline{f}, \overline{e}})$ and $(V, S_{\overline{f}, e})$ have the same structure of connected components.
Hence we have, $C_{x, \overline{f},\overline{e}} = C_{x, \overline{f},e}$ and $C_{y, \overline{f},\overline{e}} = C_{y, \overline{f},e}$, which implies $d(X_t,Y_t,f) = 0$.

\subsubsection{Case (2)}
In this case, both edges $e=(u,v)$ and $f=(x,y)$ connect $C_{u, \overline{f},\overline{e}}$ and $C_{v, \overline{f},\overline{e}}$ in graph $(V,S_{\overline{f},\overline{e}})$.
Thus, $C_{x, \overline{f},\overline{e}} \neq C_{y, \overline{f},\overline{e}}$ and $C_{x, \overline{f},e} = C_{y, \overline{f},e}$, which implies

\begin{align}\label{eq:pc-bd-case2} 
  d(X_t,Y_t,f)
  \nonumber &= \frac{p_f (1 - p_f) (\*\lambda^{ C_{u, \overline{f},\overline{e}}} + \*\lambda^{ C_{v, \overline{f},\overline{e}}})}{1 + (1 - p_f) (\*\lambda^{ C_{u, \overline{f},\overline{e}}} + \*\lambda^{ C_{v, \overline{f},\overline{e}}}) + \*\lambda^{ C_{u, \overline{f},\overline{e}} \cup  C_{v, \overline{f},\overline{e}}}}  \\
  &\leq (1 - p_{\min}) (\lambda_{\max}^{\abs{ C_{u, \overline{f},\overline{e}}}} + \lambda_{\max}^{\abs{ C_{v, \overline{f},\overline{e}}}}).
\end{align}

We further consider two sub-cases: (a) $f \in X_t$; (b) $f \not\in X_t$.

\paragraph{\textbf{Case (2a)}}
As $f \in X_t$, it holds that $C_u(X_t) = C_v(X_t)$ and there exists a path $P = (x_0,x_1,\ldots,x_\ell)$ that connects $u$ and $v$ in the graph $(G, X_t)$ for some $\ell > 0$.
Note that only edges on path $P$ may satisfy the requirement of Case (2) i.e. $C_{u, \overline{f},\overline{e}} \neq C_{v, \overline{f},\overline{e}}$.
Therefore, by \eqref{eq:pc-bd-case2}, the sum of $d(X_t,Y_t,f)$ in this case can be bounded by
\begin{align} \label{eq:case-2-a}
  \sum_{f \in P} (1 - p_{\min})(\lambda_{\max}^{\abs{C_{u, \overline{f},\overline{e}}}} + \lambda_{\max}^{\abs{C_{v, \overline{f},\overline{e}}}})
  &\overset{(\star)}{\leq} (1 - p_{\min})\sum_{i = 1}^{\ell} (\lambda_{\max}^{i} + \lambda_{\max}^{\ell + 1 - i}) 
  \leq \frac{2K}{1 - \lambda_{\max}},
\end{align}
where $(\star)$ follows from the fact that $C_{u, \overline{f},\overline{e}}$ and $C_{v, \overline{f},\overline{e}}$ contains at least $i$ and $\ell + 1 - i$ vertices respectively, where $f = (x_{i-1},x_{i})$.

\paragraph{\textbf{Case (2b)}}
As $f \not\in X_t$, it holds that $X_t = S_{\overline{f}, \overline{e}}$, and only edges in $E(C_{u, \overline{f},\overline{e}}, C_{v, \overline{f},\overline{e}})$ may satisfy the requirement of Case (2) (i.e. $f$ connects two components), where we recall that $E(C_{u, \overline{f},\overline{e}}, C_{v, \overline{f},\overline{e}})$ denotes the set of edges between $C_{u, \overline{f},\overline{e}}$ and $C_{v, \overline{f},\overline{e}}$.
Therefore, the sum of $d_{X_t,Y_t,f}$ in this case can be bounded by
\begin{align}
  &\nonumber \abs{E(C_{u, \overline{f},\overline{e}}, C_{v, \overline{f},\overline{e}})} \cdot (1 - p_{\min})
  (\lambda_{\max}^{\abs{C_{u, \overline{f},\overline{e}}}} + \lambda_{\max}^{\abs{C_{v, \overline{f},\overline{e}}}})\\ 
  &\nonumber \overset{(\star)}{\leq} \log n \cdot \min \left\{\abs{C_{u, \overline{f},\overline{e}}}, \abs{C_{v, \overline{f},\overline{e}}} \right\} \cdot (1 - p_{\min}) (\lambda_{\max}^{\abs{C_{u, \overline{f},\overline{e}}}} + \lambda_{\max}^{\abs{C_{v, \overline{f},\overline{e}}}}) \\
   \nonumber &\leq 2 \log n \cdot (1 - p_{\min}) \cdot \max_{z > 0} z\; \lambda_{\max}^z \\
   \label{eq:case-2-b} &\leq  \frac{2K}{1-\lambda_{\max}} ,
\end{align}
where $(\star)$ follows from fact that $X_t \in \+G$.

\subsubsection{Case (3)}
Note that $d(X_t,Y_t,f) = 0$ when 
\begin{align}\label{eq:component}
  C_{u, \overline{f},\overline{e}} = C_{u,f,\overline{e}}\text{ and } C_{v, \overline{f},\overline{e}} =C_{v,f,\overline{e}},
\end{align}
i.e. both $x$ and $y$ are not in $C_{u, \overline{f},\overline{e}} \cup C_{v, \overline{f},\overline{e}}$ where $f=(x,y)$. 
Furthermore, two constraints in~\eqref{eq:component} cannot be violated at the same time, as $f=(x,y)$ does not connect $C_{u, \overline{f},\overline{e}}$ and $C_{v, \overline{f},\overline{e}}$. 
Therefore, it suffices to consider the case where $C_{u, \overline{f},\overline{e}} \neq C_{u,f,\overline{e}}$ and  $C_{v, \overline{f},\overline{e}} = C_{v,f,\overline{e}}$.

Let $A=C_{u, \overline{f},\overline{e}}, B=C_{v, \overline{f},\overline{e}}, C=C_{u,f,\overline{e}} \setminus A$. A similar calculation yields
\begin{align}\label{eq:pc-bd-case3}
  \nonumber \d(X_t,Y_t,f) &= \abs{ \frac{p_f \tp{1+\*\lambda^{A \cup B}}}{1+\*\lambda^{A \cup B} + (1-p_f) \tp{\*\lambda^{A} + \*\lambda^B}} - \frac{p_f\tp{1+\*\lambda^{A \cup B \cup C}}}{1+\*\lambda^{A \cup B \cup C} + (1-p_f) \tp{\*\lambda^{A \cup C} + \*\lambda^B}}}\\
  &\le (1-p_{\min}) \lambda_{\max}^{\abs{A}}.
\end{align}

We further consider two sub-cases: (a) $f \in X_t$; (b) $f \not\in X_t$.

\paragraph{\textbf{Case (3a)}}
When $f \in X_t$, in order to make contribution, the deletion of $f$ split $C_{u,f, \overline{e}}$ into $A = C_{u,\overline{f}, \overline{e}}$ and $C = C_{u,f, \overline{e}} \setminus A$.
In this case, $f$ must be a bridge in graph $C_{u,f,\overline{e}}$.
Pick an arbitrary spanning tree $T$ rooted at $u$ in $C_{u,f,\overline{e}}$.
Obviously, $f$ must be an edge in $T$.
Let $\ell$ be the size of $T$ and $\{f_1,f_2,\ldots,f_{\ell}\}$ be the edges in $T$ sorted by the size of $C_{u, \overline{f_i},\overline{e}}$ in decreasing order.
Note that $f_j$ must be in component $C_{u,\overline{f_i},\overline{e}}$ for all $1 \le i < j \le \ell$, since $|C_{u, \overline{f_i}, \overline{e}}| \geq |C_{u, \overline{f_j}, \overline{e}}|$ means $f_i$ cannot be an ancestor of $f_j$.
Hence, $|C_{u,\overline{f_i},\overline{e}}| \ge \ell - i + 1$. Together with~\eqref{eq:pc-bd-case3}, we bound the sum of $d(X_t,Y_t,f)$ in this case by
\begin{align} \label{eq:case-3-a}
  \sum_{i=1}^\ell (1 - p_{\min}) \lambda_{\max}^i \leq \frac{\lambda_{\max}(1 - p_{\min})}{1 - \lambda_{\max}} \le \frac{K}{1-\lambda_{\max}}.
\end{align}

\paragraph{\textbf{Case (3b)}}
When $f \not\in X_t$, $f$ must connects $C_{u, \overline{f},\overline{e}}$ and other component in $(V,S_{\overline{f},\overline{e}})$ except $C_{v, \overline{f},\overline{e}}$.
Therefore, the sum of $d(X_t,Y_t,f)$ in this case is bounded by (similar calculation as in \eqref{eq:case-2-b})
\begin{align}
 \abs{E(C_{u, \overline{f},\overline{e}}, V\setminus C_{u, \overline{f},\overline{e}})} \cdot (1 - p_{\min}) \lambda_{\max}^{\abs{C_u}}
   \label{eq:case-3-b} \leq  \frac{K}{1-\lambda_{\max}},
\end{align}
where we use \eqref{eq:pc-bd-case3} and the fact that $S_{\overline{f}, \overline{e}} = X_t \in \+G$.

\subsubsection{Wrapping up}
Recall that by our assumption of $K$ in \eqref{lem:pc-good}, it holds that $K \leq (1 - \lambda_{\max})/27$.
Together with previous analysis, we bound $\sum_{f \in E \setminus \{e\}} d(X_t,Y_t,f)$ by
\begin{align} 
  \sum_{f \in E \setminus \{e\}} d(X_t,Y_t,f) \leq 
  \eqref{eq:case-2-a} + \eqref{eq:case-2-b} + 2 \times \eqref{eq:case-3-a} + 2 \times \eqref{eq:case-3-b} 
  \le \frac{8K}{1-\lambda_{\max}}
  \leq \frac{1}{2}.
\end{align}
Therefore, it holds that
\begin{align*}
  \E{\abs{X_{t+1} \oplus Y_{t+1}} \mid X_t, Y_t} \leq 1 - \frac{1}{m} \tp{1 - \frac{1}{2}} = 1 - \frac{1}{2m},
\end{align*}
which concludes the proof of \Cref{lem:pc-good}.

\subsection{Bad event happens with small probability (proof of \Cref{lem:bad-event-bound})}\label{sec:bad-event-bound}
Without loss of generality, we will only bound the probability $\Pr{X_t \not\in \+G}$. For simplicity of notation, we denote \emph{update sequence} $\+L_t=(e_1,e_2,\ldots,e_t)$ the chosen edges of the Glauber dynamics in the first $t$ rounds, and denote $\+S(\+L_t) = \{e_1,e_2,\ldots,e_t\}$ be the set of edges of an update sequence $\+L_t$.

Fix $t \ge \theta m \log m$ for some $\theta > 0$.
We claim that $S(\+L_t) = E$ happens with high probability. Furthermore, condition on any updating sequence $\+L_t$ with $S(\+L_t) = E$, the distribution $X_t$ stochastically dominates the product distribution $\nu = \bigotimes_{e \in E} \mathrm{Ber}(1-3K)$, where $K = (1-p_{\min}) \log n$.

\begin{lemma}\label{lm:high-prob}
  If $t \ge \theta m \log m$ for some $\theta > 0$, event $S(\+L_t) \neq E$ occurs with probability at most $m^{1-\theta}$.
\end{lemma}

\begin{lemma}\label{lm:marginal-bound}
  For every $S \subseteq E$ and $e \in E$, it holds that
  \begin{align*}
    \frac{\mu(S\cup\set{e})}{\mu(S\cup\set{e}) + \mu(S\setminus\set{e})} \geq 1 - 3K.
  \end{align*}
\end{lemma}

\begin{lemma}\label{lm:st-dominate-2}
  For any updating sequence $\+L_t$ satisfying $S(\+L_t)=E$
  the distribution $X_t$ condition on $\+L_t$ stochastically dominates $\nu$, i.e., there exists a coupling $C=(S,T)$ of $(X_t\mid \+L_t)$ and $\nu$ satisfying $T \subseteq S$.
\end{lemma}
We first prove \Cref{lem:bad-event-bound} with \Cref{lm:high-prob} and \Cref{lm:st-dominate-2}.

\begin{proof}[Proof of \Cref{lem:bad-event-bound}]
For any $t \ge \theta m \log m$, it holds that
\begin{align*}
  \Pr{X_t \not\in \+G} 
  &= \sum_{\substack{\+L_t \in E^t\\ S(\+L_t)=E }} \Pr{X_t \not\in \+G \mid \+L_t} \Pr{\+L_t}
  +\sum_{\substack{\+L_t \in E^t\\ S(\+L_t) \neq E }} \Pr{X_t \not\in \+G \mid \+L_t} \Pr{\+L_t}\\
  &\le \max_{\substack{\+L_t \in E^t\\ S(\+L_t) =E }} \Pr{X_t \not\in \+G \mid \+L_t}+\Pr{S(\+L_t) \neq E}
\end{align*}

Fix an update sequence $\+L_t$ satisfying $S(\+L_t)=E$. 
By \Cref{lm:st-dominate-2}, the definition of good event $\+G$ and the definition of $\nu$,
\begin{align*}
  \Pr{X_t \not\in \+G \mid \+L_t}
  \le \sum_{S \in \+C} (3K)^{\abs{E(S,V \setminus S)}} 
  \le \sum_{S \in \+C} (3K)^{\abs{S} \log n} 
  \le \sum_{j=1}^{+\infty} n^j n^{j \log(3k)}
  \le n^{\log (27K)}.
\end{align*}

Furthermore, by \Cref{lm:high-prob}, the probability of $\+S(\+L_t) \neq E$ is upper bounded by $m^{1-\theta}$. 
Therefore,
\begin{align*}
    \Pr{X_t \not\in \+G} &\le n^{\log (27 K)} + m^{1-\theta}. \qedhere 
\end{align*}
\end{proof}

In the rest of this section, we are dedicated to proving \Cref{lm:high-prob} and \Cref{lm:st-dominate-2}.

\begin{proof}[Proof of \Cref{lm:high-prob}]
  Fix $t \ge \theta m \log m$.
  For each edge $e \in E$, the probability that $e \not\in \+L_t$ is at most
  \begin{align*}
    \Pr{e \not \in \+L_t} \le \tp{1-\frac{1}{m}}^t \le \tp{1-\frac{1}{m}}^{\theta m \log m} \leq m^{-\theta}. 
  \end{align*}
  By union bound, 
  \begin{align*}
    \Pr{S(\+L_t) \neq E} &\le \sum_{e \in E} \Pr{e \not \in \+L_t} \le m^{1-\theta}. \qedhere
  \end{align*}
\end{proof}

\begin{proof}[Proof of \Cref{lm:st-dominate-2} assuming \cref{lm:marginal-bound}]
  Instead of proving the original statement, we will prove that for any update sequence $\+L_t \in E$, there exists a coupling $(S,T)$ of $(X_t \mid \+L_t)$ and $\nu$ satisfying that $(T \cap R) \subseteq (S \cap R)$, where $R = S(\+L_t)$.
  We remark that this is stronger compared to the original statement. 

  We prove by induction on $t$.
  Base case $t=0$ follows from $R = \emptyset$.
  Now we assume our assumption holds for any $t' \le t$.
  Fix an update sequence $\+L_{t+1} = (e_1,e_2,\ldots,e_{t+1})$. Let $\+L_t=(e_1,e_2,\ldots,e_t)$ be the update sequence. By induction hypothesis, there exists a coupling $(S,T)$ of $(X_t \mid \+L_t)$ and $\nu$ satisfying $(T \cap R) \subseteq (S \cap R)$, where $R = S(\+L_t)$. 
  We design the following coupling $(S',T')$ of $(X_t \mid \+L_{t+1})$ and $\nu$ based on $(S, T)$:
  \begin{itemize}
    \item let $q_0 = \frac{\mu(S \cup \{e_{t+1}\} )}{\mu(S \cup \{e_{t+1}\}) + \mu(S \setminus \{e_{t+1}\})}$ and $q_1 = 1-3K$;
    \item draw $r \sim \mathrm{Uniform}(0,1)$;
    \begin{itemize}
      \item if $r < q_0$, set $S' = S \cup \{e_{t+1}\}$; Otherwise, set $S' = S \setminus \{e_{t+1}\}$;
      \item if $r < q_1$, set $T' = T \cup \{e_{t+1}\}$; Otherwise, set $T' = T \setminus \{e_{t+1}\}$.
    \end{itemize}
  \end{itemize} 
  Note that $q_0$ is the probability of adding $e_{t+1}$ to $S$ in the update step of Glauber dynamics.
  It can be verified that $(S', T')$ is indeed a coupling of $(X_{t+1} \mid \+L_{t+1})$ and $\nu$. 
  Moreover, if $q_0 \ge q_1$, it holds that $(T' \cap R') \subseteq (S' \cap R')$, where $R' = \+S(\+L_{t+1})$. 
  Hence, it remains to show $q_0 \ge q_1 = 1-3K$, which follows from \cref{lm:marginal-bound}.
  This concludes the proof of \Cref{lm:st-dominate-2}.
\end{proof}

\begin{proof}[Proof of \cref{lm:marginal-bound}]
  For convenience, let $e = (u, v)$.
  We consider the following cases.
  \begin{enumerate}
    \item $u, v$ are in the same connected component in graph $(V, S \setminus \{e\})$.
    In this case,
    \begin{align*}
      \frac{\mu(S \cup \{e\} )}{\mu(S \cup \{e\}) + \mu(S \setminus \{e\})} = p_{e} \ge 1 - K/\log n \geq 1-3K.
    \end{align*}
    \item $u, v$ are in different connected components in graph $(V, S \setminus \{e\})$. 
    Let $C_u$ and $C_v$ be the connected components that $u$ and $v$ are in respectively.
    In this case,
    \begin{align*}
      \frac{\mu(S \cup \{e\} )}{\mu(S \cup \{e\}) + \mu(S \setminus \{e\})} = \frac{p_e (1+\*\lambda^{C_u\cup C_v})}{1+ \*\lambda^{C_u \cup C_v} + (1-p_e)\tp{\*\lambda^{C_u} + \*\lambda^{C_v}}} \ge \frac{p_e}{3-2p_e}.
    \end{align*}
    Recall that $p_e \ge 1-\frac{K}{\log n} \ge 1-K$. Therefore, $\frac{\mu(S \cup \{e\} )}{\mu(S \cup \{e\}) + \mu(S \setminus \{e\})} \ge \frac{1-K}{1+2K} \ge 1-3K$. \qedhere
  \end{enumerate}
\end{proof}

\section{Acknowledgement}
We would like to thank Jingcheng Liu and Yitong Yin for inspiring discussions and invaluable comments on manuscripts of this paper.
We would also like to thank Weiming Feng for helpful discussions on manuscripts.
X.Z. would like to thank Chunyang Wang for pointing out a dynamic connectivity structure.

\bibliographystyle{alpha}
\bibliography{refs}

\appendix

\section{Missing proofs}

\subsection{Proof of \Cref{thm:main}}\label{sec:main-proof}
Let $\+A(G=(V,E),\*p,\*\lambda,\epsilon)$ be the approximate sampler in \Cref{thm:alg-RC} for distribution of random cluster model specified by graph $G$ and parameters $\*p$ and $\*\lambda$ within total variation distance $\epsilon$.
The sampler $\+B(G=(V,E),\*\beta,\*\lambda,\epsilon)$ for Gibbs distribution of Ising model is defined as follows.
\begin{itemize}
  \item initialize $X \gets \emptyset$ and let $\*p = \tp{1 - \beta_i^{-1}}_{i \in V}$;
  \item draw $S$ from $\+A(G,\*p,\*\lambda,\epsilon)$;
  \item for each $C \in \kappa(V,X)$, add $C$ to $X$ with probability $\frac{\*\lambda^C}{1+\*\lambda^C}$;
  \item return $X$.
\end{itemize}
By a standard coupling argument and \Cref{prop:ES-coupling}, $X$ drawn in $\+B(G,\*\beta,\*\lambda,\epsilon)$ satisfies
\begin{align*}
  d_{\mathrm{TV}}(X,\mu^{\mathrm{Ising}}_{\*\beta,\*\lambda}) \le \epsilon.
\end{align*}
This concludes the proof of \Cref{thm:main}.

\subsection{Proof of \Cref{lem:mixing-field}}\label{sec:field-proof}
By \Cref{lem:SI-RC}, \Cref{lem:field-dynamics-mixing}, and~\eqref{eq:mixing-decay}, it suffices to show
\begin{align*}
  \mu^{\mathrm{RC}}_{E,\*p,\*\lambda}(E) \ge \tp{\frac{p_{\min}}{2}}^{n^2}.
\end{align*}

By \Cref{prop:partition-func}, $Z^{\-{RC}}_{E, \*p, \*\lambda} \cdot \*\beta^E = Z^{\-{Ising}}_{\*\beta, \*\lambda} \leq \*\beta^E \cdot 2^n$, which implies $Z^{\-{RC}}_{E, \*p, \*\lambda} \leq 2^{n}.$
Therefore,
\begin{align*}
  \mu^{\mathrm{RC}}_{E,\*p,\*\lambda}(E) \ge \frac{\*p^E\tp{1+\*\lambda^V}}{2^n} \ge \tp{\frac{p_{\min}}{2}}^{2^n}.
\end{align*}

\subsection{Proof of \Cref{lem:field-dynamics-mixing}} \label{append:field}
We give a proof of \Cref{lem:field-dynamics-mixing} in this section for completeness.
Recall that in \Cref{sec:field-dynamics}, we consider a distribution $\overline{\mu}$ over $2^U$ on the ground set $U$.
In each round, the field dynamics $\overline{P^{\-{FD}}_\theta}$ for $\overline{\mu}$ with parameter $\theta \in (0, 1)$ updates a configuration $X \in 2^U$ as
\begin{itemize}
\item sample $S' \sim \bigotimes_{u \in U} \-{Ber}(\theta)$ and let $S = S' \cup X$;
\item update $X$ according to distribution $(\theta^{-1} * \overline{\mu})( \cdot \mid \+P_{X, U\setminus S})$.
\end{itemize}
We want to prove that when $\*\lambda * \overline{\mu}$ is $C$-spectrally independent under all pinnings for all $\*\lambda \in \mathbb{R}^U_{>0}$, then for $\theta \in (0, 1)$, and any distribution $\overline{\nu}$ that is absolutely continous with respect to $\overline{\mu}$, we have
\begin{align} \label{eq:ent-FD-target-raw}
  D_{\-{KL}}(\overline{\nu}\; \overline{P^{\-{FD}}_\theta} \parallel \overline{\mu}\; \overline{P^{\-{FD}}_\theta}) &\leq (1 - \kappa) D_{\-{KL}}(\overline{\nu} \parallel \overline{\mu}),
\end{align}
where $\kappa$ is defined as $\kappa = (\theta / \e)^{C + 3}$.

Note that the field dynamics we use here is not in its standard form in previous works~\cite{chen2021rapid}.
In order to be more compatible with previous results, we first transform the setting we use to a more standard version. 
Specifically, let $\mu$ and $\nu$ be the distribution over $2^U$ defined as
\begin{align*}
  \forall S\subseteq E, \quad \mu(S) := \overline{\mu}(E\setminus S), \text{ and } \nu(S) := \overline{\nu}(E\setminus S).
\end{align*}
It is standard to check that $\nu$ is absolutely continous with respect to $\mu$ and $\*\lambda * \mu$ is $C$-spectrally independent under all pinnings for all $\*\lambda \in \mathbb{R}^U_{>0}$.
In each round, the field dynamics $P^{\-{FD}}_\theta$ with parameter $\theta \in (0, 1)$ falls into its standard form that updates a configuration $X \in 2^U$ as
\begin{itemize}
\item sample $S' \sim \bigotimes_{u \in U} \-{Ber}(\theta)$ and let $S = S' \cup (E\setminus X)$;
\item update $X$ according to the distribution $(\theta * \mu)(\cdot \mid \+P_{X, U\setminus S})$.
\end{itemize}
Note that $P^{\-{FD}}_\theta$ is exact $\overline{P^{\-{FD}}_\theta}$ except we exchange the role of ``in'' and ``out'' for each element in $U$, and it could be easily checked that 
\begin{align*}
  \forall X, Y \subseteq E, \quad P^{\-{FD}}_\theta(X, Y) = \overline{P^{\-{FD}}_\theta}(E\setminus X, E\setminus Y),
\end{align*}
which implies that
\begin{align*}
  \forall X \subseteq E, \quad \mu P^{\-{FD}}(X) = \overline{\mu}\; \overline{P^{\-{FD}}_\theta} (E\setminus X) \text{ and } \nu P^{\-{FD}}(X) = \overline{\nu}\; \overline{P^{\-{FD}}_\theta} (E\setminus X).
\end{align*}
Hence, in order to prove \Cref{eq:ent-FD-target-raw}, it is sufficient for us to prove
\begin{align} \label{eq:ent-FD-target}
  D_{\-{KL}}(\nu P^{\-{FD}}_\theta \parallel \mu P^{\-{FD}}_\theta) &\leq (1 - \kappa) D_{\-{KL}}(\nu \parallel \mu),
\end{align}
for $\kappa = (\theta / \e)^{C + 3}$, under the assumption that $\*\lambda * \mu$ is $C$-spectrally independent under all pinnings for all $\*\lambda \in \mathbb{R}^U_{> 0}$ and $\nu$ is absolutely continuous with respect to $\mu$.

For convenience, we just inherit the notation that is used in \cite{chen2021optimalIsing}.
Suppose $\mu$ is a distribution over $2^U$ over the ground set $U$, it could also be interpreted as a distribution over $\{\0, \1\}^U$.
Let $\theta \in (0, 1)$ and $\pi := \theta * \mu$.
Formally, for every $\sigma \in \{\0, \1\}^U$, we have
\begin{align*}
  \pi(\sigma) := \frac{\mu(\sigma) \theta^{\norm{\sigma}_+}}{Z_\pi},
\end{align*}
where $Z_\pi := \sum_{\sigma \in \{\0, \1\}^U} \mu(\sigma) \theta^{\norm{\sigma}_+}$ and $\norm{\sigma}_+$ denotes the number of $\1$ in the vector $\sigma$.
For a function $f:\{\0, \1\}^U \to \mathbb{R}_{\geq 0}$ and a distribution $\mu$ over $\{\0, \1\}^U$, the entropy is defined as $\Ent[\mu]{f} := \E[\mu]{f \log f} - \E[\mu]{f} \log \E[\mu]{f}$.

\begin{lemma}[\text{\cite[Lemma 2.3]{chen2021optimalIsing}}] \label{lem:key} 
Let $\theta \in (0, 1)$ be a real number.
Let $\mu$ be a distribution over $\{\0,\1\}^U$ and $\eta > 0$.
If  ${\mu}$ is $C$-spectrally independent under all pinnings for all $\*\lambda \in \mathbb{R}_{>0}^V$, then $\mu$ satisfies the following inequality for any function $f: \{\0, \1\}^U \to \mathbb{R}_{\geq 0}$ with $\kappa = \tp{\frac{\theta}{\e}}^{C + 3}$,
\begin{align}  \label{eq:mag-bf-ent}
  \Ent[\mu]{f} \leq \kappa^{-1} \cdot \frac{Z_\pi}{\theta^{\abs{U}}} \sum_{R \subseteq U} (1 - \theta)^{\abs{R}} \theta^{\abs{U} - \abs{R}} \cdot \pi_R(\*{1}_R) \cdot \Ent[\pi^{{\*1}_R}]{f},
\end{align}
where $\*{1}_R$ is the all-$1$ vector on $R$.
\end{lemma}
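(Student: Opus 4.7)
The plan is to prove \eqref{eq:mag-bf-ent} by first writing $\mu$ as a mixture of the conditional measures $\pi^{\*{1}_R}$, and then controlling the ``mixing'' contribution to the entropy via a local-to-global factorization derived from spectral independence.

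For the first step, since $\pi(\sigma) \propto \mu(\sigma)\,\theta^{\norm{\sigma}_+}$, we have $\mu(\sigma) = Z_\pi \cdot \theta^{-\norm{\sigma}_+}\,\pi(\sigma)$. Writing $\theta^{-\norm{\sigma}_+} = \theta^{-|U|}\prod_{u:\sigma_u = \0}\theta$ and introducing independent $\mathrm{Ber}(1-\theta)$ selector variables $\{Y_u\}_{u\in U}$, one may express $\theta = \E{\mathbf{1}[Y_u = 0]}$ and expand; grouping by $R := \{u: Y_u = 1\}$ gives
\begin{align*}
  \mu(\sigma) = \frac{Z_\pi}{\theta^{|U|}}\sum_{R \subseteq U}(1-\theta)^{|R|}\theta^{|U|-|R|}\,\pi_R(\*{1}_R)\cdot\pi^{\*{1}_R}(\sigma)\cdot\mathbf{1}[\sigma_R = \*{1}_R].
\end{align*}
This identifies $\mu$ with a specific mixture over $R$ whose components are the conditional measures $\pi^{\*{1}_R}$ and whose mixing weights are exactly $(1-\theta)^{|R|}\theta^{|U|-|R|}\pi_R(\*{1}_R)$, matching the weights in \eqref{eq:mag-bf-ent}.

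For the second step, convexity of entropy in the mixing distribution bounds $\Ent[\mu]{f}$ by a weighted sum of $\Ent[\pi^{\*{1}_R}]{f}$ plus a ``mixing'' term that measures variation of the conditional means $\E[\pi^{\*{1}_R}]{f}$ across $R$. The mixing term is exactly what is controlled by spectral independence: the hypothesis that $\*\lambda * \mu$ is $C$-spectrally independent under all pinnings for every $\*\lambda \in \mathbb{R}_{>0}^U$ transfers to every conditional measure $\pi^{\*{1}_R}$ and all its further pinnings. Invoking the local-to-global machinery of Alev--Lau and Anari--Liu--Oveis Gharan on the down-up walk that randomly selects a $\mathrm{Ber}(1-\theta)$ subset and pins it to $\*{1}$, one obtains a modified log-Sobolev-type inequality with constant $\kappa = (\theta/\e)^{C+3}$. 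Multiplying through absorbs the mixing term into the sum on the right-hand side of \eqref{eq:mag-bf-ent}.

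The main obstacle will be matching the normalization $Z_\pi/\theta^{|U|}$ and the Bernoulli weights produced by the mixture decomposition of step one precisely with the weights that arise naturally in the down-up walk analysis of step two, so that the inequality closes without extra multiplicative factors. A secondary subtlety is tracking the exponent $C+3$ in $\kappa$: the additive slack (rather than the bare $C$ one might naively expect) comes from the harmonic-sum approximation $\sum_{\ell=1}^{n}\frac{1}{\ell} \approx \log(1/\theta)\cdot n$ in the Alev--Lau telescoping product, together with the passage between the single-site spectral independence bound and the block factorization actually needed for the random-size subset chosen by the field dynamics.
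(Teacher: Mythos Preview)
The paper does not prove this lemma at all: it is quoted verbatim as \cite[Lemma 2.3]{chen2021optimalIsing} and immediately followed by a remark that in that reference the inequality \eqref{eq:mag-bf-ent} is called the $\theta$-magnetized block factorization of entropy. The only argument in \Cref{append:field} is the derivation of \Cref{lem:field-dynamics-mixing} \emph{from} \Cref{lem:key}, namely the verification that \eqref{eq:mag-bf-ent} is equivalent to the KL-contraction \eqref{eq:KL-decay-target} via the decomposition $P^{\-{FD}}_\theta = P^\downarrow P^\uparrow$ and the adjointness relation \eqref{eq:adjoint}. So there is no ``paper's own proof'' of \Cref{lem:key} to compare against.

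As for the proposal itself: your Step~1 mixture identity is correct and is indeed how the weights in \eqref{eq:mag-bf-ent} arise. Step~2, however, is only a pointer to machinery rather than a proof. Saying ``invoke the local-to-global machinery of Alev--Lau and Anari--Liu--Oveis Gharan on the down-up walk'' and that this ``obtains a modified log-Sobolev-type inequality with constant $\kappa=(\theta/\e)^{C+3}$'' is precisely the content of the cited lemma, not a derivation of it; the actual work in \cite{chen2021optimalIsing} (building on \cite{chen2021rapid,anari2021entropicII,chen2022localization}) is to establish \emph{entropic} (not merely spectral) factorization with that specific constant from the spectral-independence hypothesis, and your sketch does not indicate how that passage from variance to entropy is carried out. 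Your closing paragraph correctly flags this as the real difficulty but does not resolve it. If you intend to supply a self-contained proof, that entropic step is where the substance lies.
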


\begin{remark}
  In \cite{chen2021optimalIsing}, \eqref{eq:mag-bf-ent} is called $\theta$-magnetized block factorization of entropy.
\end{remark}

Note that the field dynamics could be decomposed into two components that mimics the so-called down-up walk.
Let $\Omega := \{\sigma \in \{\0, \1\}^U\}$ and $\omega := \{\*{1}_R \mid R\subseteq U\}$
that is
\begin{align*}
  P^{\-{FD}}_\theta = P^{\downarrow}P^{\uparrow},
\end{align*}
where $P^{\downarrow} \in \mathbb{R}_{\geq 0}^{\Omega \times \omega}$ and $P^{\uparrow} \in \mathbb{R}_{\geq 0}^{\omega \times \Omega}$ are defined as follow: $\forall \sigma \in \Omega, {\*1}_R \in \omega$ we have
\begin{align*}
  P^{\downarrow}(\sigma, \*{1}_R) &:= \*1[R \subseteq \sigma^{-1}(\1)] \tp{1 - \theta}^{\abs{R}} \theta^{\norm{\theta}_+ - \abs{R}} \\
  \text{and} \quad 
  P^{\uparrow}(\*{1}_R, \sigma) &:= \*1[R \subseteq \sigma^{-1}(\1)] \pi^{\*{1}_R}(\sigma),
\end{align*}
where $\sigma^{-1}(\1) := \{u \in U \mid \sigma_u = \1\}$ denotes the set of $\1$-spin elements in $U$ according to $\sigma$.
Moreover, let $\mu_0 := \mu P^{\downarrow}$, these two operator have the following adjoint property: $\forall \sigma \in \Omega, \*{1}_R \in \omega$, 
\begin{align*}
  \mu(\sigma) P^{\downarrow}(\sigma, \*{1}_R) = \mu_0(\*{1}_R) P^{\uparrow}(\*{1}_R, \sigma).
\end{align*}
Without loss of generality, if we assume $R \subseteq \sigma^{-1}(\1)$, then we have
\begin{align*}
  \mu_0(\*{1}_R) P^{\uparrow}(\*{1}_R, \sigma)
  &= \sum_{\tau: \tau_R = \*{1}_R} (1 - \theta)^{\abs{R}} \theta^{\norm{\tau}_+ - \abs{R}} \mu(\tau) \cdot \pi^{\*{1}_R}(\sigma) \\
  &= \sum_{\tau: \tau_R = \*{1}_R} (1 - \theta)^{\abs{R}} \theta^{\norm{\tau}_+ - \abs{R}} \cdot \frac{\mu(\tau)\pi(\sigma)}{\pi_R(\*{1}_R)} \\
  &= \sum_{\tau: \tau_R = \*{1}_R} (1 - \theta)^{\abs{R}} \theta^{\norm{\sigma}_+ - \abs{R}} \cdot \frac{\pi(\tau)\mu(\sigma)}{\pi_R(\*{1}_R)} \\
  &= \mu(\sigma) (1 - \theta)^{\abs{R}} \theta^{\norm{\sigma}_+ - \abs{R}} \sum_{\tau: \tau_R = \*{1}_R} \frac{\pi(\tau)}{\pi_R(\*{1}_R)} \\
  &= \mu(\sigma) \cdot (1 - \theta)^{\abs{R}}\theta^{\norm{\sigma}_+ - \abs{R}}
   = \mu(\sigma) P^{\downarrow}(\sigma, \*{1}_R).
\end{align*}

Let $\nu$ be a distribution over $\{\0, \1\}^U$, let $f = \frac{\nu}{\mu}$.
By standard result, adjoint property gives us
\begin{align} \label{eq:adjoint}
  \frac{\nu P^\downarrow}{\mu P^\downarrow} = P^\uparrow f.
\end{align}

Now, we are ready to prove \Cref{lem:field-dynamics-mixing}.
We claim that when $f = \frac{\nu}{\mu}$, \eqref{eq:mag-bf-ent} is equivalent to
\begin{align} \label{eq:KL-decay-target}
  D_{\-{KL}}(\nu \parallel \mu) \leq \kappa^{-1} \tp{D_{\-{KL}}(\nu \parallel \mu) - D_{\-{KL}}(\nu P^{\downarrow} \parallel \mu P^{\downarrow})},
\end{align}
which is equivalent to
\begin{align*}
  D_{\-{KL}}(\nu P^{\downarrow} \parallel \mu P^{\downarrow}) &\leq (1 - \kappa) D_{\-{KL}}(\nu \parallel \mu).
\end{align*}
Then, \eqref{eq:ent-FD-target} (and \Cref{lem:field-dynamics-mixing}) follows directly from the data processing inequality as
\begin{align*}
  D_{\-{KL}}(\nu P^{\-{FD}}_\theta \parallel \mu P^{\-{FD}}_\theta) = D_{\-{KL}} (\nu P^\downarrow P^\uparrow \parallel \mu P^\downarrow P^\uparrow) \leq 
  D_{\-{KL}}(\nu P^{\downarrow} \parallel \mu P^{\downarrow}) \leq (1 - \kappa) D_{\-{KL}}(\nu \parallel \mu).
\end{align*}
Now, we are going to prove \eqref{eq:mag-bf-ent} $\Leftrightarrow$ \eqref{eq:KL-decay-target} by a brute force calculation.
First, note that
\begin{align}
  \frac{Z_\pi}{\theta^{\abs{U}}}
  \nonumber & \sum_{R \subseteq U} (1 - \theta)^{\abs{R}} \theta^{\abs{U} - \abs{R}} \cdot \pi_R(\*{1}_R) \cdot \Ent[\pi^{\*{1}_R}]{f} \\
  \label{eq:tar-1} &= \frac{Z_\pi}{\theta^{\abs{U}}} \sum_{R\subseteq U} (1 - \theta)^{\abs{R}}\theta^{\abs{U}-\abs{R}} \pi_R(\*{1}_R) \E[\pi^{\*{1}_R}]{f \log f} \quad -  \\
  \label{eq:tar-2} & \quad  \frac{Z_\pi}{\theta^{\abs{U}}} \sum_{R\subseteq U} (1 - \theta)^{\abs{R}}\theta^{\abs{U}-\abs{R}} \pi_R(\*{1}_R) \E[\pi^{\*{1}_R}]{f} \log \E[\pi^{\*{1}_R}]{f}.
\end{align}
We will show that
\begin{align*}
\eqref{eq:tar-1} = D_{\-{KL}}(\nu \parallel \mu) = \Ent[\mu]{f}
\quad \text{and} \quad
\eqref{eq:tar-2} = D_{\-{KL}}(\nu P^\downarrow \parallel \mu P^\downarrow).
\end{align*}
We start from \eqref{eq:tar-1} and note that
\begin{align*}
  \eqref{eq:tar-1}
  &= \frac{Z_\pi}{\theta^{\abs{U}}} \sum_{R\subseteq U} (1 - \theta)^{\abs{R}} \theta^{\abs{U} - \abs{R}} \sum_{\sigma: \sigma_R = \*{1}_R} \pi(\sigma) f(\sigma) \log f(\sigma) \\
  &= \frac{Z_\pi}{\theta^{\abs{U}}} \sum_{\sigma \in \Omega} \pi(\sigma) \sum_{R\subseteq \sigma^{-1}(\1)} (1 - \theta)^{\abs{R}} \theta^{\abs{U} - \abs{R}} \cdot f(\sigma) \log f(\sigma) \\
  &= \sum_{\sigma \in \Omega} \mu(\sigma) \sum_{R\subseteq \sigma^{-1}(\1)} (1 - \theta)^{\abs{R}} \theta^{\norm{\sigma}_+ - \abs{R}} \cdot f(\sigma) \log f(\sigma) \\
  &= \sum_{\sigma \in \Omega} \mu(\sigma) \cdot f(\sigma) \log f(\sigma) \sum_{R\subseteq \sigma^{-1}(\1)} (1 - \theta)^{\abs{R}} \theta^{\norm{\sigma}_+ - \abs{R}} \cdot \\
  &= \sum_{\sigma \in \Omega} \mu(\sigma) \cdot f(\sigma) \log f(\sigma) = \Ent[\mu]{f}.
\end{align*}
Now, we only left to prove \eqref{eq:tar-2} $= D_{\-{KL}}(\nu P^\downarrow \parallel \mu P^\downarrow)$.
By the definition of KL-divergence, we have
\begin{align*}
  D_{\-{KL}}(\nu P^\downarrow \parallel \mu P^\downarrow)
  &= \sum_{R\subseteq U} \mu P^\downarrow (\*{1}_R) \cdot \frac{\nu P^\downarrow (\*{1}_R)}{\mu P^\downarrow (\*{1}_R)} \log \frac{\nu P^\downarrow (\*{1}_R)}{\mu P^\downarrow (\*{1}_R)} \\
 (\text{by \eqref{eq:adjoint}}) \quad &= \sum_{R\subseteq U} \mu P^\downarrow (\*{1}_R) \cdot P^\uparrow f(\*{1}_R) \log P^\uparrow f(\*{1}_R) \\
  &\overset{(\star)}{=} \sum_{R\subseteq U} \sum_{\sigma: \sigma_R = \*{1}_R} \mu(\sigma) (1 - \theta)^{\abs{R}} \theta^{\norm{\sigma}_+ - \abs{R}} \cdot \E[\pi^{\*{1}_R}]{f} \log \E[\pi^{\*{1}_R}]{f} \\
  &= \frac{Z_\pi}{\theta^{\abs{U}}} \sum_{R\subseteq U} \sum_{\sigma: \sigma_R = \*{1}_R} \pi(\sigma) (1 - \theta)^{\abs{R}} \theta^{\abs{U} - \abs{R}} \cdot \E[\pi^{\*{1}_R}]{f} \log \E[\pi^{\*{1}_R}]{f} \\
  &= \frac{Z_\pi}{\theta^{\abs{U}}} \sum_{R\subseteq U}  (1 - \theta)^{\abs{R}} \theta^{\abs{U} - \abs{R}} \tp{\sum_{\sigma: \sigma_R = \*{1}_R} \pi(\sigma)} \cdot \E[\pi^{\*{1}_R}]{f} \log \E[\pi^{\*{1}_R}]{f} \\
  &= \frac{Z_\pi}{\theta^{\abs{U}}} \sum_{R\subseteq U}  (1 - \theta)^{\abs{R}} \theta^{\abs{U} - \abs{R}} \pi_R(\*{1}_R) \cdot \E[\pi^{\*{1}_R}]{f} \log \E[\pi^{\*{1}_R}]{f} 
   = \eqref{eq:tar-2},
\end{align*}
where in $(\star)$ we use the fact that
\begin{align*}
  \mu P^\downarrow (\*{1}_R) = \sum_{\sigma: \sigma_R = \*{1}_R} \mu(\sigma) (1 - \theta)^{\abs{R}} \theta^{\norm{\sigma}_+ - \abs{R}}
  \quad \text{and} \quad
  P^\uparrow f(\*{1}_R) = \E[\pi^{\*{1}_R}]{f}.
\end{align*}

\section{Proof for the second part of~\Cref{thm:good-mixing}}\label{append:proof}
Let $(X_t)_{t \ge 0}$ and $(Y_t)_{t \ge 0}$ be Glauber dynamics starting from $E$ and the stationary distribution $\mu$ respectively.
Just as in the proof of the first part, we present the following lemma. The proof of \Cref{lem:bad-event-bound-2} follows a similar fashion to that of \Cref{lem:bad-event-bound}, and we shall omit the proof for brevity.
\begin{lemma}\label{lem:bad-event-bound-2}
  For any $t \ge 0$, it holds that
  \begin{align*}
    \Pr[]{X_t \not\in \+G} \le n^{\log(27K)} \text{ and } \Pr[]{Y_t \not\in \+G} \le n^{\log (27K)}.
  \end{align*}
\end{lemma}

\begin{proof}[Proof for the second part of~\Cref{thm:good-mixing}]
Let $T = 2m(\log m + \log \tp{2/\epsilon})$ and $\+E_t$ be the event that $X_t \not\in \+G$ or $Y_t \not\in \+G$ happens. By \Cref{lem:bad-event-bound-2} and union bound,
  \begin{align*}
    \forall 0\leq t \leq T - 1, \quad \Pr{\overline{\+E_t}} &\leq 2 n^{\log(27K)}.
  \end{align*}
  When $K \leq 10^{-5}\exp\tp{- \frac{\log (8/\epsilon)}{\log n}}$, it holds that
  \begin{align*}
    2n^{\log(27K)} \cdot 2m^2 \leq 4 n^{4 + \log(27K)}  \leq \frac{\epsilon}{2}.
  \end{align*}
  Together with \Cref{lem:pc-good} and \Cref{lem:cfs}, we have
  \begin{align*}
    \Pr{X_T \neq Y_T} &\le \tp{1-\frac{1}{2m}}^T + \frac{\epsilon}{2} \leq \epsilon. \qedhere
  \end{align*}
\end{proof}
\end{document}